\newcommand{\numberset}{\mathbb}
\newcommand{\R}{\numberset{R}}
\newcommand{\K}{\numberset{K}}
\newcommand{\A}{\numberset{A}}
\newcommand{\BB}{\numberset{B}}
\newcommand{\C}{\numberset{C}}
\newcommand{\VV}{\numberset{V}}
\newcommand\II{\mathcal{I}}
\DeclareFontFamily{U}{matha}{\hyphenchar\font45}
\DeclareFontShape{U}{matha}{m}{n}{
	<-6> matha5 <6-7> matha6 <7-8> matha7
	<8-9> matha8 <9-10> matha9
	<10-12> matha10 <12-> matha12
}{}
\DeclareSymbolFont{matha}{U}{matha}{m}{n}
\DeclareFontFamily{U}{mathx}{\hyphenchar\font45}
\DeclareFontShape{U}{mathx}{m}{n}{
	<-6> mathx5 <6-7> mathx6 <7-8> mathx7
	<8-9> mathx8 <9-10> mathx9
	<10-12> mathx10 <12-> mathx12
}{}
\DeclareSymbolFont{mathx}{U}{mathx}{m}{n}
\DeclareMathDelimiter{\vvvert} {0}{matha}{"7E}{mathx}{"17}%
\newcommand{\norm}[1]{\left\lVert#1\right\rVert}
\newcommand{\abs}[1]{\left\lvert#1\right\rvert}
\newcommand{\normiii}[1]{\left\vvvert#1\right\vvvert}
\newtheorem{teo}{Theorem}
\newtheorem{defi}{Definition}
\newtheorem{pro}{Problem}
\newtheorem{lem}{Lemma}
\newtheorem{prop}{Proposition}
\newtheorem{rem}{Remark}
\newtheorem{assump}{Assumption}
\newtheorem{corollary}{Corollary}
\let\div\undefined\DeclareMathOperator{\div}{div} 
\let\curl\undefined\DeclareMathOperator{\curl}{curl} 
\DeclareMathOperator*{\Grad}{\boldsymbol\nabla}
\DeclareMathOperator*{\Grads}{{\underline{\boldsymbol{\varepsilon}}}}
\DeclareMathOperator{\grads}{\boldsymbol\nabla_s}
\newcommand{\derivative}[2]{\frac{\partial #1}{\partial #2}}
\newcommand\Huo{\mathbf{H}^1_0(\Omega)}
\newcommand\Ldo{L^2_0(\Omega)} 
\newcommand\Hub{\mathbf{H}^1(\B)} 
\newcommand\HuOm{\mathbf{H}^1(\Omega)} 
\newcommand\Hubd{\Hub'} 
\newcommand\LdBd{\mathbf{L}^2(\B)} 
\newcommand\LdOd{\mathbf{L}^2(\Omega)} 
\newcommand\Winftyd{\mathbf{W}^{1,\infty}(\B)}
\newcommand\Of{\Omega^f} 
\newcommand\Os{\Omega^s} 
\newcommand\B{\mathcal B} 
\renewcommand\u{\mathbf{u}} 
\renewcommand\a{\mathbf{a}} 
\renewcommand\v{\mathbf{v}} 
\renewcommand\d{\mathbf{d}} 
\renewcommand\c{\mathbf{c}} 
\newcommand\f{\mathbf{f}} 
\newcommand\g{\mathbf{g}} 
\newcommand\w{\mathbf{w}} 
\newcommand\x{\mathbf{x}} 
\newcommand\s{\mathbf{s}} 
\renewcommand\S{\mathbf{S}} 
\renewcommand\H{\mathbf{H}}
\newcommand\V{\mathcal{V}} 
\newcommand\U{\mathcal{U}} 
\newcommand\W{\mathcal{W}} 
\newcommand\X{\mathbf{X}} 
\newcommand\Xbar{\overline{\X}} 
\newcommand\Xcirc{\X^{\mathrm{m}}_h}
\newcommand\Y{\mathbf{Y}} 
\newcommand\Ytilde{\tilde{\mathbf{Y}}} 
\newcommand\pwg{{\varphi}}
\newcommand\LL{{\boldsymbol{\Lambda}}} 
\newcommand\Vline{\mathbf{V}} 
\newcommand\Sline{\S} 
\newcommand\llambda{\boldsymbol\lambda} 
\newcommand\mmu{{\boldsymbol\mu}} 
\newcommand\ds{\mathrm{d}\s} 
\newcommand\dx{\mathrm{d}\x} 
\newcommand\dt{\Delta t} 
\newcommand\T{\mathcal{T}} 
\newcommand\quadnode{\mathbf{p}}
\newcommand\quadweigth{\omega}
\newcommand\Lcal{\mathcal{L}}
\newcommand\Pcal{\mathcal{P}}
\newcommand\Vdiv{\Vline_{0,h}} 
\newcommand\tri{\mathrm{T}} 
\newcommand\trifj{\tri_{\mathrm{f},j}} 
\newcommand\tris{\tri_\mathrm{s}} 
\newcommand\reftri{\widehat{\tri}} 
\newcommand\projLdue{\Pi^0} 
\newcommand{\Amatr}{\mathsf{A}}
\newcommand{\Bmatr}{\mathsf{B}}
\newcommand{\Cmatr}{\mathsf{C}}
\newcommand\llambdaI{\Pi^0\llambda}
\newcommand{\crhs}[2]{\c_h(#1,#2)}
\newcommand{\crhsL}[2]{\c_{\ell,h}(#1,#2)}
\newcommand{\crhsZ}[2]{\c_{0,h}(#1,#2)}
\newcommand{\crhsU}[2]{\c_{1,h}(#1,#2)}
\renewcommand\lg{\begin{color}{black}}
	\newcommand\gl{\end{color}}
\newcommand\blue{\begin{color}{black}}
	\newcommand\noblue{\end{color}}
\newcommand\DB{\begin{color}{black}}
	\newcommand\BD{\end{color}}
\newcommand\fc{\begin{color}{magenta}}
	\newcommand\cf{\end{color}}
\begin{document}
	
	\title[]{Quadrature error estimates on non--matching grids\\in a fictitious domain framework\\for fluid--structure interaction problems}
	
	\author[Daniele Boffi]{Daniele Boffi}
	\address{Computer, electrical and mathematical sciences and engineering division, King Abdullah University of Science and Technology, Thuwal 23955, Saudi Arabia and Dipartimento di Matematica \textquoteleft F. Casorati\textquoteright, Universit\`a degli Studi di Pavia, via Ferrata 5, 27100, Pavia, Italy}
	\email{daniele.boffi@kaust.edu.sa}
	\urladdr{kaust.edu.sa/en/study/faculty/daniele-boffi}
	
	\author{Fabio Credali}
	\address{Computer, electrical and mathematical sciences and engineering division, King Abdullah University of Science and Technology, Thuwal 23955, Saudi Arabia. Former affiliation: Istituto di Matematica Applicata e Tecnologie Informatiche \textquoteleft E. Magenes\textquoteright, Consiglio Nazionale delle Ricerche, via Ferrata 5, 27100, Pavia, Italy }
	\email{fabio.credali@kaust.edu.sa -- fabio.credali@imati.cnr.it}
	
	\author{Lucia Gastaldi}
	\address{Dipartimento di Ingegneria Civile, Architettura, Territorio, Ambiente e di Matematica, Universit\`a degli Studi di Brescia, via Branze 43, 25123, Brescia, Italy}
	\email{lucia.gastaldi@unibs.it}
	\urladdr{lucia-gastaldi.unibs.it}
	
	
	\begin{abstract}
		We consider a fictitious domain formulation for
fluid-structure interaction problems based on a distributed Lagrange
multiplier to couple the fluid and solid behaviors. How to deal with the
coupling term is crucial since the construction of the associated finite
element matrix requires the integration of functions defined over non-matching
grids: the exact computation can be performed by intersecting the involved
meshes, whereas an approximate coupling matrix can be evaluated on the
original meshes by introducing a quadrature error. The purpose of this paper is twofold: we prove that the discrete problem is well-posed also when the coupling term is constructed in approximate way and we discuss quadrature error estimates over non-matching grids.
		
		\
		
		\noindent\textbf{Keywords:} Fluid-structure interactions, Fictitious domain, Non-matching grids, Quadrature error estimates.
	\end{abstract}
	
	\maketitle
	
	\section{Introduction}
	
	A wide range of physical phenomena generating interest within the scientific community can be classified as fluid-structure interaction problems. Several applications can be found, for instance, in biology and medicine, as well as in structural engineering. Due to the complexity of the equations governing this kind of systems, which usually involve nonlinear terms and complex geometries, several mathematical models have been developed during the last decades.
	
	Mathematical approaches to fluid-structure interaction problems are
	usually divided into two categories. In \textit{boundary fitted approaches},
	fluid and solid are discretized by two meshes sharing the interface. The solid
	evolution is usually described in Lagrangian framework, whereas the fluid grid
	evolves and deforms around the solid one during the simulation. The Arbitrary
	Lagrangian Eulerian (ALE)
	formulation~\cite{hirt1974arbitrary,donea2004arbitrary,hughes1981lagrangian}
	belongs to this category. The family of \textit{boundary unfitted approaches},
	on the other hand, considers meshes which are not fitting with the interface.
	Several approaches fall in this family: we mention, for instance, the level set formulation~\cite{chang1996level}, the Nitsche--XFEM method~\cite{alauzet2016nitsche,burman2014unfitted}, the fictitious domain approach introduced in~\cite{glowinski2001fictitious,glowinski1997lagrange}, the CutFEM method~\cite{burman2015cutfem}, the shifted boundary method \cite{atallah2021analysis} and the immersed boundary-conformal isogeometric method \cite{wei2021immersed}. 
	In our case, the fluid and solid are discretized by two completely independent
	meshes and then the solid discretization is in some way superimposed to the
	fluid one. We will be more precise in the description of our model.
	In case of \textit{fitted} approaches, the kinematic constraints combining fluid dynamics and solid evolution are automatically satisfied thanks to the construction of the method itself, but mesh distortion may cause stability issues. On the other hand, \textit{unfitted} approaches overcome the presence of distorted meshes, but they require additional effort to impose kinematic constraints. This can be done, for instance, by introducing Lagrange multipliers or stabilization terms on the mesh elements close to the interface.
	
	In this paper, we focus on the approach introduced in~\cite{2015}.
	This method is an evolution of Peskin's immersed boundary
	method~\cite{peskin1972flow,peskin2002immersed} where, first, finite element are
	used for the approximation of the partial differential
	equations~\cite{boffi2003finite} and, then, a
	Lagrange multiplier formulation is considered in the spirit of the
	fictitious
	domain method.
	Fluid and solid domain are discretized by two independent meshes. More
	precisely, evolution and deformation of the immersed solid body are studied by
	Lagrangian description: the equations are defined on a fixed reference domain,
	which is mapped, at each time step, into the actual position of the solid. The
	fluid dynamics is described in Eulerian coordinates on a fixed mesh, which is
	extended to the region occupied by the structure.
	
	A distributed Lagrange multiplier and a suitable bilinear form are
	used to impose the kinematic constraint at variational level: this condition relates the solid material velocity, defined on the fictitiously extended fluid domain, with the Lagrangian map, defined on the solid reference domain. The additional term appearing in the variational formulation is also known as \textit{coupling term}. In the finite element framework, the associated coupling (or interface) matrix is obtained by integrating the product of solid and fluid basis functions over the solid elements.
	In particular, the fluid basis functions are composed with the solid
	mapping at the previous time step. This poses the issue of integrating functions defined on two different non-matching grids: the accuracy of this procedure not only depends on the precision of the considered quadrature rule, but also on the algorithm we employ to compute the reciprocal position between the two meshes.
	Several works have been focused on similar aspects: among others, we quote, for example, the interpolation technique between unfitted grids discussed in~\cite{fromm2023interpolation}, the efficient three dimensional CutFEM approach presented in~\cite{massing2013efficient}, the variational transfer method discussed and implemented in~\cite{krause2016parallel} and, finally, in the case of codimension one interface problems, the mortar element approach introduced in~\cite{maday2002influence}.
	
	In~\cite{boffi2022interface}, we presented and discussed two possible integration techniques that can be adopted to assemble the finite element coupling matrix. The coupling term can be assembled \textit{exactly} if we  compute the intersection between the fluid mesh and the mapped solid one and use a quadrature formula which is exact for the polynomials involved. With this procedure, we construct a finer mesh for the solid domain, which is used just to implement a composite quadrature rule able to take fully into account the interaction between fluid and solid elements. Since this algorithm may be computationally demanding (the coupling term is assembled at each time step), we discuss in this paper an \textit{inexact} approach by integrating directly over the solid elements.
	In this case, the fluid elements interacting with the solid are determined by the position of the mapped quadrature nodes, so that an additional source of error is introduced. The purpose of this paper is twofold: we prove the well-posedness of the discrete problem when the coupling term is constructed in approximate way and we provide estimates for the quadrature error. Our theoretical results are confirmed also at numerical level.
	
	The paper is organized as follows. After recalling the functional analysis notation in Section~\ref{sec:notation}, we discuss the problem setting in Section~\ref{sec:problem_setting}. We then summarize in Section~\ref{sec:analysis} the existing theoretical results concerning the well-posedness of both the continuous and the discrete problem. In Section~\ref{sec:interface}, we discuss the assembly techniques for the coupling term and, in Section~\ref{sec:abstract-results}, we prove a general theorem showing the effect of numerical integration. Quadrature error estimates for the coupling term are then presented and proved in Section~\ref{sec:quad_error}, while in Section~\ref{sec:infsup} we discuss the well-posedness of the discrete problem in case of approximate coupling. In the last Section~\ref{sec:numerical_tests}, we report some numerical tests confirming our theoretical results.
	
	\section{Notation}\label{sec:notation}
	Before starting our discussion, we recall some useful functional analysis notation.
	
	Let us consider an open bounded domain $D\subset\R^d$. We denote by $L^2(D)$ the space of square integrable functions on $D$ with inner product $(\cdot,\cdot)_D$. $L^2_0(D)$ is the subspace of $L^2(D)$ of zero mean valued functions.
	
	For standard Sobolev spaces, we use the notation $W^{s,p}(D)$ where
	$s\in\R$ is the differentiability and $p\in[1,\infty]$ the integrability
	exponent, respectively. When $p=2$, we denote the space by $H^s(D)$ with norm
	$\norm{\cdot}_{s,D}$ and seminorm $\abs{\cdot}_{s,D}$. In particular, if $s$
	is a fractional exponent, we are going to use the following expression of
	the fractional seminorm
	\begin{equation*}
		\abs{u}_{s,D}^2 = \int_D \int_D \frac{\abs{u(x)-u(y)}^2}{\abs{x-y}^{d+2s}} \,dxdy
	\end{equation*}
	for a given function $u\in H^s(D)$.
	We denote by $H^1_0(D)$ the subset of $H^1(D)$ of functions with zero trace on $\partial D$.
	
	Vector and tensor valued functions are denoted by boldface letters, as well as the Sobolev spaces they belong to.
	
	Finally, $\mathscr{L}(M_1,M_2)$ denotes the space of linear and continuous functionals between two functional spaces $M_1$ and $M_2$. 
	
	\section{Problem setting}\label{sec:problem_setting}
	
	In this paper we deal with the formulation introduced and studied
	in~\cite{2015,stat} and in subsequent papers. We refer the interested reader to
	that reference for the derivation of the model, since here we are essentially
	interested in the treatment of the coupling term. For this reason, we
	consider a simplified model which represents the steady state formulation
	of a fluid-structure interaction problem. Our results are representative on what we should expect in more general situations.
	
	We consider an open bounded domain $\Omega\subset\R^d$ ($d=2,3$) with
	Lipschitz continuous boundary. This domain is the union of two disjoint
	regions, denoted by $\Of$ and $\Os$, occupied by the fluid and the solid
	respectively. In our study, we assume that the solid boundary $\partial\Os$
	cannot touch $\partial\Omega$, that is
	$\partial\Os\cap\partial\Omega=\emptyset$, and we focus on the case with solid
	of codimension zero, even if the method can deal with codimension one problems
	(see e.g.~\cite{2015,annese}) and can be extended to codimension two case
	(see e.g.~\cite{ALZETTA,heltaizunino}).
	
	We adopt the Eulerian description for the fluid, denoting by $\x$ the
	associated variable and the behavior of the solid is described using the
	Lagrangian approach. We introduce a reference domain $\B\subset\R^d$ for the
	solid, associated with variable $\s$, which is mapped into the current
	position of the body through the action of a map $\X$, which is one of the
	unknowns of our problem, so that $\x=\X(\s)\in\Os$ and $\Os=\X(\B)$, see
	Figure~\ref{fig:geo}.
	After time semi-discretization, we consider the mapping $\Xbar$ at the
	previous time step.
	The other unknowns of our system are the velocity $\u$ and pressure $p$ of the fluid which are extended, in the spirit of the fictitious domain, to
	the entire domain~$\Omega$.
	
	\begin{figure}
		\centering
		\begin{tikzpicture}
			\draw[fill=cyan!20, thick] (0,0) -- (4,0) -- (4,4) -- (0,4) -- (0,0);
			\draw (4.5,3.7) node {$\Omega$};
			\draw (0.5,3.7) node {$\Of$};	
			
			\draw (-4.5,3.7) node {$\B$};
			\coordinate (c1) at (-4.5,0.5);
			\coordinate (c2) at (-3.5,0.7);
			\coordinate (c3) at (-2,1.8);
			\coordinate (c4) at (-2,3);
			\coordinate (c5) at (-3,3.5);
			\coordinate (c8) at (-4.5,3);
			\coordinate (c7) at (-4.7,2);
			\coordinate (c6) at (-4.8,1);
			\draw[fill = yellow!30, thick] plot [smooth cycle] coordinates {(c1) (c2) (c3) (c4) (c5) (c8) (c7) (c6)};			
			
			\coordinate (c1) at (1.5,0.5);
			\coordinate (c2) at (2.5,0.7);
			\coordinate (c3) at (3,1.8);
			\coordinate (c4) at (3,3);
			\coordinate (c5) at (2,3.5);
			\coordinate (c8) at (1.5,3);
			\coordinate (c7) at (1,2);
			\coordinate (c6) at (0.6,1);
			\draw[fill = yellow!30, thick] plot [smooth cycle] coordinates {(c1) (c2) (c3) (c4) (c5) (c8) (c7) (c6)};
			\draw (2.2,3) node {$\Os$};
			
			\draw (-3.2,2.3) node {$\s$};
			\draw (2.1,2) node {$\x$};
			
			\draw (-1.1,2.4) node {$\X(\s)$};
			
			\draw[->,thick,violet] (-3,2.3)  .. controls (0,2)  ..  (1.9,2);
		\end{tikzpicture}
		
		\
		
		\caption{Geometric configuration of the problem. A Lagrangian point
			$\s$ of the solid reference domain $\B$ is mapped into the actual position of the body $\Os$ through the map $\X$. The domain $\Omega$ acts like a container.}
		\label{fig:geo}
		
	\end{figure}
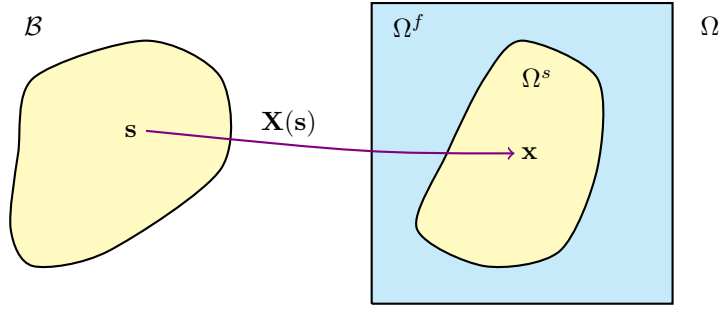
	
	The last unknown is a Lagrange multiplier $\llambda$ defined on the reference
	solid domain $\B$ which enforces the kinematic condition
	\[
	\u^s(\x,t) = \derivative{\X(\s,t)}{t},
	\]
	where $\x=\X(\s,t)$ and $\u^s$ is the restriction of $\u$ to $\Os$.
	
	We can reformulate the above condition, discretely in time, as
	\begin{equation}
		\u(\Xbar) - \frac{\X}{\dt} = -\frac{\Xbar}{\dt}.
	\end{equation}
	For the sake of simplicity and without losing generality, we use the condition
	\begin{equation}\label{eq:kinematic}
		\u(\Xbar)-\gamma\X=\d,
	\end{equation}
	where we fix the constant $\gamma$ equal to one to avoid heavier notation.
	
	The multiplier $\llambda$ is chosen in a space $\LL$ defined on $\B$.
	We consider a continuous bilinear form $$\c:\LL\times\Hub\to\R$$
	with the property
	\begin{equation}\label{eq:def_prop}
		\c(\mmu,\Y) = 0 \qquad \forall \mmu\in\LL \qquad \implies \qquad \Y=0.
	\end{equation}
	We report here two possible choices of $\LL$ and $\c$: let $\LL_0=\Hubd$, the dual space of $\Hub$, then
	\begin{equation}
		\label{eq:c_l2}
		\c_0(\mmu,\Y) = \langle\mmu,\Y\rangle \quad \forall\mmu\in \Hubd,\Y\in \Hub;
	\end{equation}
	alternatively, let $\LL_1=\Hub$, then $\c$ can be taken as the scalar product
	in $\Hub$
	\begin{equation}
		\label{eq:c_h1}
		\c_1(\mmu,\Y) = (\grads\mmu,\grads\Y)_\B + (\mmu,\Y)_\B\quad \forall\mmu,\Y\in \Hub.
	\end{equation}
	In the remainder, we use generally the notation $\c$ and we specify
	$(\LL_0,\c_0)$ or $(\LL_1,\c_1)$ when necessary.
	
	The kinematic condition~\eqref{eq:kinematic} can then be written as
	\[
	\c(\mmu,\u(\Xbar)-\X) = \c(\mmu,\d) \qquad \forall\mmu\in\LL
	\]
	by exploiting the property~\eqref{eq:def_prop} of the bilinear form $\c$.
	
	Our final variational formulation can then be written by introducing suitable
	bilinear forms taking into account the fluid and solid models as
	in~\cite{stat}:
	\begin{equation}\label{eq:forme}
		\begin{aligned}
			&\a_f(\u,\v)=\alpha(\u,\v)_\Omega+(\nu\Grads(\u),\Grads(\v))_\Omega&& \forall\u,\v\in\Huo\\
			&\a_s(\X,\Y) = \beta(\X,\Y)_\B+\kappa (\grads\X,\grads\Y)_\B && \forall\X,\Y\in\Hub.
		\end{aligned}
	\end{equation}
	\DB In~\cite{stat} we considered a model involving the fully nonlinear
	Navier--Stokes equations. Here, for simplicity we present only the linear case
	corresponding to the Stokes problem.\BD
	
	We are now in a position to write the problem we are dealing with in our
	paper.
	
	\begin{pro}
		\label{pro:stationary_general}
		Let $\Xbar\in\Winftyd$ be invertible with Lipschitz inverse. Given
		$\f\in\LdOd$, $\g\in \LdBd$, and $\d\in\Hub$, find $(\u,p)\in\Huo\times\Ldo$,
		$\X\in\Hub$, and $\llambda\in\LL$ such that
		\begin{subequations}
			\begin{align}
				&\a_f(\u,\v)-(\div\v,p)_\Omega+\c(\llambda,\v(\Xbar))=(\f,\v)_\Omega&&
				\forall\v\in\Huo\\
				&(\div\u,q)_\Omega=0&&\forall q\in \Ldo\\
				&\a_s(\X,\Y)-\c(\llambda,\Y)=(\g,\Y)_\B&&\forall\Y\in\Hub\\
				&\c(\mmu,\u(\Xbar)-\X)=\c(\mmu,\d)&&\forall\mmu\in\LL
			\end{align}
		\end{subequations}
	\end{pro}
	
	\section{Analysis of the problem}\label{sec:analysis}
	
	In this section, we recall existing results regarding the well-posedness of Problem~\ref{pro:stationary_general} and its finite element discretization.
	The problem has been already studied in terms of well-posedness and stability: since this is the starting point for our new results, we  recall the main features of the theory presented in \cite{stat}. We introduce two new bilinear forms ${\A:\VV\times\VV\rightarrow\R}$ and ${\BB:\VV\times\Ldo\rightarrow\R}$ defined as
	\begin{equation}
		\begin{aligned}
			&\A(\U,\V) = \a_f(\u,\v) + \a_s(\X,\Y) + \c(\llambda,\v(\Xbar)-\Y) - \c(\mmu,\u(\Xbar)-\X)\\
			&\BB(\V,q) = \DB-(\div\v,q)_\Omega\BD
		\end{aligned}
	\end{equation}
	where $\VV$ is the product space $\VV = \Huo\times\Hub\times\LL$ endowed with the norm
	\begin{equation}
		\normiii{\V} = \big(\norm{\v}^2_{1,\Omega} + \norm{\Y}^2_{1,\B} + \norm{\mmu}^2_{\LL}\big)^{1/2},
	\end{equation}
	with generic element $\V=(\v,\Y,\mmu)$.
	
	In this way, Problem~\ref{pro:stationary_general} can be rewritten as follows: let $\Xbar\in\Winftyd$ be invertible with Lipschitz inverse, given $\f\in \LdOd$, $\g\in \LdBd$ and $\d\in\Hub$, find $(\U,p)\in\VV\times\Ldo$ such that
	\begin{equation}\label{eq:mix_pb}
		\begin{aligned}
			&\A(\U,\V) + \BB(\V,p) = (\f,\v)_\Omega + (\g,\Y)_\B - \c(\mmu,\d)&&\forall\V\in\VV\\
			&\BB(\U,q)=0&&\forall q\in\Ldo.
		\end{aligned}
	\end{equation}
	We introduce  the solution operator ${\Lcal:\VV\times\Ldo\rightarrow\VV^\prime\times\Ldo^\prime}$ associated to the left hand side of~\eqref{eq:mix_pb}
	\begin{equation}
		\langle \Lcal(\U,p),(\V,q) \rangle = \A(\U,\V) + \BB(\V,p) +\BB(\U,q).
	\end{equation}
	Hence, the solution of~\eqref{eq:mix_pb} is characterized by
	\begin{equation}\label{eq:ell_operator}
		\Lcal(\U,p) = (\f,\g,\d,\mathbf{0}).
	\end{equation}
	
	This problem is well-posed since the following inf-sup conditions are satisfied \cite{stat}.
	\begin{prop}
		There exist two positive constants $\eta$ and $\theta$ such that
		\begin{equation}
			\inf_{q\in\Ldo}\sup_{\V\in\VV}\frac{\BB(\V,q)}{\normiii{\V}\norm{q}_{0,\Omega}}\ge\eta,
			\qquad
			\inf_{\U\in\K_{\BB}}\sup_{\V\in\K_{\BB}}\frac{\A(\U,\V)}{\normiii{\U}\normiii{\V}}\ge\theta
		\end{equation}
		where $\K_\BB=\{\V\in\VV:\BB(\V,q)=0\quad\forall q\in\Ldo\}$. Consequently, thanks to the theory in \cite{mixedFEM}, there exists a unique solution of Problem~\ref{pro:stationary_general}.
	\end{prop}
	
	We now introduce the finite element discretization
	of~\eqref{eq:mix_pb}. Let us consider a mesh $\T_h^\Omega$ in $\Omega$ with
	meshsize $h_\Omega$ and a mesh $\T_h^\B$ with size $h_\B$ in the solid reference domain $\B$. We introduce four finite element spaces $\Vline_h\subset\Huo$, $Q_h\subset \Ldo$, $\Sline_h\subset \Hub$, $\LL_h\subset\LL$, taking care that $\Vline_h$ and $Q_h$ satisfy the discrete inf-sup conditions for the Stokes problem. In our analysis, we assume $\Sline_h=\LL_h$, even if $\Sline_h$ and $\LL_h$ can be either equal or different spaces, especially in the case $\c=\c_0$ (see \cite{najwa} for more general cases). 
	In the next sections, \lg we consider triangular meshes. Fluid
	velocity and pressure are discretized by the Bercovier--Pironneau element
	$\Pcal_1$-iso-$\Pcal_2/\Pcal_1$, which was introduced and analyzed in \cite{bercovier}. Piecewise linear elements are adopted for the mapping and the Lagrange multiplier. \gl
	\lg More precisely, we set
	\begin{equation}\label{eq:element}
		\begin{aligned}
			\Vline_h &= \{ \v\in\HuOm: \v_{\mid
				\tri}\in[\Pcal_1(\tri)]^2 \quad \forall \tri\in\T_{h/2}^\Omega \} \\
			Q_h &= \{q\in \Ldo\cap H^1(\Omega): q_{\mid \tri}\in \Pcal_1(\tri)\quad\forall \tri\in\T_{h}^\Omega\}\\
			\S_h &= \{\w\in\Hub:\w_{\mid \tri}\in [\Pcal_1(\tri)]^2\quad\forall \tri\in\mathcal{T}_{h}^\B\}\\
			\LL_h&=\{\mmu\in\Hub:\mmu_{\mid \tri}\in [\Pcal_1(\tri)]^2\quad\forall \tri\in\mathcal{T}_{h}^\B\}.
		\end{aligned}
	\end{equation}
	Let us remark that velocity and pressure are defined on two different
	(nested) meshes: indeed, $\T_{h/2}^\Omega$ is obtained by connecting the middle points of each triangle of $\T_h^\Omega$.
	\gl
	
	Finally, we recall that if $\c=\c_0$, given in \eqref{eq:c_l2}, and if
	$\mmu\in\mathbf{L}^1_{loc}(\B)$, then it is possible to identify the duality
	pairing with the scalar product in $\LdBd$. Since $\LL_h$ is a finite element
	subspace of $\LdBd$, we have then
	\begin{equation}\label{eq:c_l2_inner}
		\c_0(\mmu_h,\Y_h) = (\mmu_h,\Y_h)_\B \quad \forall\mmu_h\in\LL_h,\forall\Y_h\in\Sline_h.
	\end{equation}
	
	Problem \ref{pro:stationary_general} can be written in discrete form as follows.
	\begin{pro}
		\label{pro:discrte_stationary_general}
		Let $\Xbar\in\Winftyd$ be invertible with Lipschitz inverse. Given $\f\in \LdOd$, $\g\in \LdBd$ and $\d\in \Hub$, find $(\u_h,p_h)\in\Vline_h \times Q_h$, $\X_h\in \Sline_h$ and $\llambda_h\in \LL_h$, such that
		\begin{subequations}
			\begin{align}
				&\a_f(\u_h,\v_h)-(\div\v_h,p_h)_\Omega+\c(\llambda_h,\v_h(\Xbar))=(\f,\v_h)_\Omega && 
				\forall\v_h\in\Vline_h\\
				&(\div\u_h,q_h)_\Omega=0&&\forall q_h\in Q_h\\
				& \a_s(\X_h,\Y_h)-\c(\llambda_h,\Y_h)=(\g,\Y_h)_\B&&\forall\Y_h\in \Sline_h \\
				& \c(\mmu_h,\u_h(\Xbar)-\X_h)=\c(\mmu_h,\d)&&\forall\mmu_h\in \LL_h.
			\end{align}
		\end{subequations}
	\end{pro}
	
	As for the continuous problem, the well-posedness of Problem~\ref{pro:discrte_stationary_general} is studied using the theory in~\cite{mixedFEM}. We define the discrete product space ${\VV_h = \Vline_h\times\Sline_h\times\LL_h}$, still endowed with the norm $\normiii{\cdot}$, so that the problem can be reformulated by using $\A$ and $\BB$. It reads: 
	let $\Xbar\in\Winftyd$ be invertible with Lipschitz inverse, given $\f\in \LdOd$, $\g\in \LdBd$ and $\d\in\Hub$, find $(\U_h,p_h)\in\VV_h\times Q_h$ such that
	\begin{equation}\label{eq:pb_disc_sp}
		\begin{aligned}
			&\A(\U_h,\V_h) + \BB(\V_h,p_h) = (\f,\v_h)_\Omega + (\g,\Y_h)_\B - \c(\mmu_h,\d)&&\forall\V_h\in\VV_h\\
			&\BB(\U_h,q_h)=0&&\forall q_h\in Q_h.
		\end{aligned}
	\end{equation}
	The discrete counterpart of the operator $\Lcal$ reads
	\begin{equation}
		\Lcal_h:\VV_h\times Q_h \longrightarrow \VV_h^\prime\times Q_h^\prime \quad\text{so that}\quad
		\Lcal_h(\U_h,p_h) = (\f,\g,\d,\mathbf{0}).
	\end{equation}
	Problem~\ref{pro:discrte_stationary_general} is well-posed since the discrete inf-sup conditions hold true~\cite{stat}.
	\begin{prop}\label{prop:infsup_discreteproblem}
		There exist two positive constants $\tilde{\eta}$ and $\tilde{\theta}$, independent of h, such that
		\begin{equation}\label{eq:infsup_discrete}
			\inf_{q_h\in Q_h}\sup_{\V_h\in\VV_h}\frac{\BB(\V_h,q_h)}{\normiii{\V_h}\norm{q_h}_{0,\Omega}}\ge\tilde{\eta},
			\qquad
			\inf_{\U_h\in\K_{\BB,h}}\sup_{\V_h\in\K_{\BB,h}}\frac{\A(\U_h,\V_h)}{\normiii{\U_h}\normiii{\V_h}}\ge\tilde{\theta}
		\end{equation}
		where
		\begin{equation}\label{eq:disc_ker}
			\K_{\BB,h}=\{\V_h\in\VV_h:\BB(\V_h,q_h)=0\quad\forall q_h\in Q_h\}.
		\end{equation}
		Therefore, there exists a unique solution of Problem~\ref{pro:discrte_stationary_general}.
	\end{prop}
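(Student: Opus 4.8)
The plan is to verify the two Babuška--Brezzi conditions of~\eqref{eq:infsup_discrete} separately, treating the divergence block and the coercivity-on-the-kernel block independently. The first inf-sup condition involves only the form $\BB(\V_h,q_h)=(\div\v_h,q_h)_\Omega$, which sees the velocity component alone, so I expect it to reduce immediately to the discrete Stokes inf-sup condition already assumed for the pair $(\Vline_h,Q_h)$. Concretely, given $q_h\in Q_h$, Stokes stability furnishes $\v_h\in\Vline_h$ with $(\div\v_h,q_h)_\Omega\ge c_{\mathrm S}\,\norm{\v_h}_{1,\Omega}\norm{q_h}_{0,\Omega}$; extending it to $\V_h=(\v_h,\mathbf{0},\mathbf{0})\in\VV_h$ leaves $\BB$ unchanged while $\normiii{\V_h}=\norm{\v_h}_{1,\Omega}$, so the first estimate follows with $\tilde\eta=c_{\mathrm S}$.

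For the second condition the starting observation is that the two coupling contributions in $\A$ are skew in the multiplier slot: testing with $\V_h=\U_h$ makes $\c(\llambda_h,\u_h(\Xbar)-\X_h)$ cancel against $-\c(\llambda_h,\u_h(\Xbar)-\X_h)$, leaving
\begin{equation*}
\A(\U_h,\U_h)=\a_f(\u_h,\u_h)+\a_s(\X_h,\X_h).
\end{equation*}
Since $\a_f(\u_h,\u_h)=\alpha\norm{\u_h}_{0,\Omega}^2+\nu\norm{\Grads\u_h}_{0,\Omega}^2$ is coercive on $\Huo$ by Korn's and Poincaré's inequalities, and $\a_s(\X_h,\X_h)=\beta\norm{\X_h}_{0,\B}^2+\kappa\norm{\grads\X_h}_{0,\B}^2$ controls $\norm{\X_h}_{1,\B}^2$ through Korn's second inequality, this already yields $\A(\U_h,\U_h)\gtrsim\norm{\u_h}_{1,\Omega}^2+\norm{\X_h}_{1,\B}^2$. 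The catch is that this diagonal test supplies no control whatsoever on $\norm{\llambda_h}_\LL$, which is exactly where the real work lies.

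To recover the missing multiplier norm I would exploit the nondegeneracy~\eqref{eq:def_prop} of $\c$ in its discrete form. Using $\Sline_h=\LL_h$, the natural auxiliary test function is $\V_h^\star=(\mathbf{0},-\llambda_h,\mathbf{0})$, for which the $\llambda_h$-row of $\A$ produces $\c(\llambda_h,\llambda_h)-\a_s(\X_h,\llambda_h)$; note $\V_h^\star$ has vanishing velocity component, hence stays in $\K_{\BB,h}$. For $\c=\c_1$ this gives $\c(\llambda_h,\llambda_h)=\norm{\llambda_h}_{1,\B}^2=\norm{\llambda_h}_\LL^2$, so the multiplier norm is recovered directly. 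The genuinely delicate case is $\c=\c_0$, where $\norm{\cdot}_\LL$ is the dual norm of $\Hubd$: the quantity $\c_0(\llambda_h,\llambda_h)=\norm{\llambda_h}_{0,\B}^2$ overestimates $\norm{\llambda_h}_\LL$, while $\normiii{\V_h^\star}=\norm{\llambda_h}_{1,\B}$ is too large to match it uniformly in $h$. Here one must instead invoke the discrete inf-sup property of the coupling form established in~\cite{stat}, namely $\sup_{\Y_h\in\Sline_h}\c(\llambda_h,\Y_h)/\norm{\Y_h}_{1,\B}\gtrsim\norm{\llambda_h}_\LL$, to select a better-scaled $\Y_h$; this is the main obstacle of the argument.

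With both pieces in hand, the standard closing step is to test against the linear combination $\V_h=\U_h+\delta\,\V_h^\star$ with $\delta>0$ small, which lies in $\K_{\BB,h}$ since the kernel is a subspace. Expanding $\A(\U_h,\V_h)$, the diagonal term gives the coercive bound on $\u_h$ and $\X_h$, the $\delta$-term contributes $\delta\,\norm{\llambda_h}_\LL^2$ up to the coupling constant, and the cross term $\delta\,\a_s(\X_h,\llambda_h)$ is absorbed via Young's inequality at the expense of shrinking the coercivity constants. Choosing $\delta$ sufficiently small yields $\A(\U_h,\V_h)\ge c\,\normiii{\U_h}^2$ while $\normiii{\V_h}\le C\,\normiii{\U_h}$, which is precisely the second estimate in~\eqref{eq:infsup_discrete} with $\tilde\theta=c/C$. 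All constants are $h$-independent provided the discrete coupling inf-sup of~\cite{stat} is, so well-posedness of Problem~\ref{pro:discrte_stationary_general} follows from the theory in~\cite{mixedFEM}.
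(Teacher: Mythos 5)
Your overall skeleton is sound and, in fact, mirrors the decomposition the paper relies on (the paper itself proves this proposition only by citation to~\cite{stat}, but spells out the analogous argument for inexact coupling in Section~\ref{sec:infsup}): the first condition is exactly the discrete Stokes inf-sup lifted to $\VV_h$ by taking $\V_h=(\v_h,\mathbf 0,\mathbf 0)$, and the second is obtained from a coupling inf-sup plus coercivity of the diagonal block. Your handling of the $\c_0$ case is also honest: you correctly identify that $\c_0(\llambda_h,\llambda_h)=\norm{\llambda_h}_{0,\B}^2$ cannot be matched against $\norm{\llambda_h}_{\LL}\norm{\llambda_h}_{1,\B}$ uniformly, and the tool you defer to (the discrete coupling inf-sup, which in the paper's Proposition~\ref{prop:infsupch} rests on the $H^1$-stability of the $L^2$ projection onto $\Sline_h$) is the right one; deferring it to~\cite{stat} is on par with what the paper itself does.

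The genuine gap is in your very first coercivity step. You claim $\a_s(\X_h,\X_h)=\beta\norm{\X_h}_{0,\B}^2+\kappa\norm{\grads\X_h}_{0,\B}^2$ controls $\norm{\X_h}_{1,\B}^2$ ``through Korn's second inequality.'' Korn is irrelevant here ($\grads$ is the full Lagrangian gradient, not a symmetric gradient), and, more seriously, the bound is simply false when $\beta=0$: on $\Hub$ there are no boundary conditions, so the gradient seminorm annihilates constants and cannot control $\norm{\X_h}_{0,\B}$. The case $\beta=0$ is squarely within the paper's scope — Proposition~\ref{prop:ellipticity} treats it explicitly, and the numerical tests use $\a_s(\X,\Y)=(\grads\X,\grads\Y)_\B$. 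Your argument cannot be repaired by restricting to $\K_{\BB,h}$, because that kernel constrains only the velocity through $\BB$, leaving $\X_h$ completely free. This is precisely why the paper (following~\cite{stat} and~\cite{xu2003some}) does \emph{not} ask for global coercivity of the diagonal block, but only for its ellipticity on the kernel of the coupling form $\C_h$: there the constraint $\c(\mmu_h,\u_h(\Xbar)-\X_h)=0$, tested with the constant multiplier $\mmu_h=\Xcirc$ (the mean of $\X_h$), yields $\norm{\Xcirc}_{0,\B}\le C\norm{\u_h}_{1,\Omega}$, which together with Poincar\'e restores control of $\norm{\X_h}_{0,\B}$. To salvage your hand-assembled test function $\V_h=\U_h+\delta\V_h^\star$ in the $\beta=0$ case you would need to add a further component in the multiplier slot (a constant $\mmu_h$) playing exactly this role — i.e., to reproduce the paper's kernel-ellipticity step rather than bypass it.
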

	
	Hence the optimal convergence theorem follows.
	\begin{teo}
		\label{theo:brezzi}
		Let $\Vline_h$ and $Q_h$ satisfy the usual compatibility
		conditions for the Stokes problem. If $(\u,p,\X,\llambda)$ and
		$(\u_h,p_h,\X_h,\llambda_h)$ denote respectively the solution for the
		continuous and the discrete problem \lg(\eqref{eq:mix_pb}
		and~\eqref{eq:pb_disc_sp}, respectively)\gl, then the following error estimate holds true
		\begin{equation*}
			\begin{aligned}
				&\norm{\u-\u_h}_{1,\Omega} + \norm{p-p_h }_{0,\Omega}+\norm{\X-\X_h }_{1,\B}+\norm{ \llambda-\llambda_h}_{\LL}\\
				&\hspace{1mm}\leq C\big( \inf_{\v_h\in\Vline_h}\norm{\u-\v_h}_{1,\Omega} + \inf_{q_h\in Q_h}\norm{p-q_h}_{0,\Omega} + \inf_{\Y_h\in\Sline_h}\norm{\X-\Y_h}_{1,\B} + 
				\inf_{\mmu_h\in\LL_h}\norm{\llambda-\mmu_h}_{\LL} \big).
			\end{aligned}
		\end{equation*}
	\end{teo}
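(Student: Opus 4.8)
The plan is to recognize Theorem~\ref{theo:brezzi} as a direct application of the abstract approximation theory for mixed variational problems from~\cite{mixedFEM}. Indeed, the forms $\A$ and $\BB$ are continuous (the coupling form $\c$ is continuous by assumption, $\a_f,\a_s$ are standard, and $\BB(\V,q)=(\div\v,q)_\Omega$ is clearly bounded), and the two discrete inf-sup conditions of Proposition~\ref{prop:infsup_discreteproblem} hold; these are exactly the hypotheses under which quasi-optimality in the combined norm $\normiii{\cdot}+\norm{\cdot}_{0,\Omega}$ follows. The first thing I would record is consistency: since $\VV_h\subset\VV$ and $Q_h\subset\Ldo$, the continuous system~\eqref{eq:mix_pb} may be tested against discrete functions, and subtracting the discrete system~\eqref{eq:pb_disc_sp} yields the Galerkin orthogonality
\begin{equation*}
\A(\U-\U_h,\V_h)+\BB(\V_h,p-p_h)=0\quad\forall\V_h\in\VV_h,\qquad \BB(\U-\U_h,q_h)=0\quad\forall q_h\in Q_h.
\end{equation*}

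Next I would estimate the error in $\U$. The key preliminary step is to approximate $\U$ from within the discrete kernel $\K_{\BB,h}$: given an arbitrary $\V_h\in\VV_h$, the discrete inf-sup condition for $\BB$ furnishes a Fortin-type correction $\W_h\in\VV_h$ with $\BB(\V_h+\W_h,q_h)=\BB(\U,q_h)=0$ for all $q_h\in Q_h$ and $\normiii{\W_h}\le\tilde\eta^{-1}\norm{\BB}\,\normiii{\U-\V_h}$, so that $\V_h+\W_h\in\K_{\BB,h}$ approximates $\U$ quasi-optimally. Renaming this element $\W_h$, both $\W_h$ and $\U_h$ lie in $\K_{\BB,h}$, hence so does their difference; applying the discrete inf-sup for $\A$ to $\U_h-\W_h$ and using that consistency gives $\A(\U-\U_h,\V_h)=-\BB(\V_h,p-p_h)=-\BB(\V_h,p-q_h)$ for every $\V_h\in\K_{\BB,h}$ and every $q_h\in Q_h$ (because $\BB(\V_h,q_h)=0$ on the kernel), I would arrive at $\normiii{\U-\U_h}\le C\big(\inf_{\V_h}\normiii{\U-\V_h}+\inf_{q_h}\norm{p-q_h}_{0,\Omega}\big)$ after a triangle inequality and the continuity of $\A$ and $\BB$.

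The pressure error is then controlled by the first inf-sup condition for $\BB$. Writing $p-p_h=(p-q_h)+(q_h-p_h)$ with $q_h-p_h\in Q_h$, I would bound $\norm{q_h-p_h}_{0,\Omega}$ by $\tilde\eta^{-1}\sup_{\V_h}\BB(\V_h,q_h-p_h)/\normiii{\V_h}$, replace $\BB(\V_h,p-p_h)$ by $-\A(\U-\U_h,\V_h)$ via consistency, and combine with the bound on $\normiii{\U-\U_h}$ already obtained. Finally, since $\VV_h=\Vline_h\times\Sline_h\times\LL_h$ is a product space and $\normiii{\cdot}^2$ is the sum of the three component norms, the single infimum over $\VV_h$ decouples into the three separate infima over $\Vline_h$, $\Sline_h$ and $\LL_h$, while the left-hand side $\normiii{\U-\U_h}$ dominates $\norm{\u-\u_h}_{1,\Omega}+\norm{\X-\X_h}_{1,\B}+\norm{\llambda-\llambda_h}_{\LL}$ up to a constant; this yields precisely the stated estimate.

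The main obstacle, although every step is standard, is the bookkeeping around the pressure term on the discrete kernel: one must use that $\K_{\BB,h}$ is defined only through test pressures $q_h\in Q_h$, so that $\BB(\V_h,\cdot)$ need \emph{not} vanish against the continuous pressure $p$, and insert the best approximation $p-q_h$ at the correct moment. The accompanying delicate point is the construction of the quasi-optimal kernel approximation $\W_h$ through the Fortin correction supplied by the discrete inf-sup for $\BB$; once these two points are handled, the remainder is triangle inequalities together with the continuity of $\A$ and $\BB$.
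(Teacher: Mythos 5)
Your proposal is correct and follows essentially the same route as the paper: the paper deduces Theorem~\ref{theo:brezzi} directly from the discrete inf-sup conditions of Proposition~\ref{prop:infsup_discreteproblem} by invoking the abstract approximation theory for saddle-point problems in~\cite{mixedFEM}, and your argument (Galerkin orthogonality, Fortin-type correction into $\K_{\BB,h}$, kernel inf-sup for $\A$ with insertion of the best pressure approximation, then pressure recovery via the inf-sup for $\BB$) is precisely the standard proof of that abstract result, correctly carried out. The only difference is that you write out in full the details the paper delegates to the citation.
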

	
	In contrast with popular unfitted methods, which can achieve higher
	order convergence when the solution is piecewise smooth, our method
	converges with a rate depending on the global regularity of the
	solution.
	In the wide literature of unfitted finite elements, each method has
	advantages and disadvantages. We refer the interested reader to~\cite{comparison}
	for a comparison of our method and other popular unfitted schemes, such
	as the CutFEM method, in the case of interface problems.
	
	\section{Computational aspects}\label{sec:interface}
	
	In this section, we briefly describe some computational aspects regarding the discrete problem in~\eqref{eq:pb_disc_sp}, with particular focus to the coupling term. We now restrict our discussion to $d=2$ with triangular meshes for both solid and fluid domain. 
	
	First, let us write the problem in matrix form
	\begin{equation}\renewcommand{\arraystretch}{1.5}
		\label{eq:matrix}
		\left[\begin{array}{@{}ccc|c@{}}
			\Amatr_f & 0 & \Cmatr_f^\top & \Bmatr^\top\\ 
			0  & \Amatr_s & -\Cmatr_s^\top &0 \\
			\Cmatr_f &  -\Cmatr_s & 0 & 0\\
			\hline
			\Bmatr & 0 & 0 & 0 \\
		\end{array}\right]
		\left[ \begin{array}{c}
			\u_h\\
			\X_h\\
			\llambda_h\\
			\hline
			p_h\\
		\end{array}\right]=
		\left[ \begin{array}{c}
			\f\\
			\g\\
			\d\\
			\hline
			\mathbf{0}
		\end{array}\right].
	\end{equation}\renewcommand{\arraystretch}{1.15}
	
	Notice that $\Amatr_f$, $\Bmatr$ act only on $\u_h$ and $p_h$, which
	are defined on the fluid mesh $\T^\Omega_h$, while $\Amatr_s$ and $\Cmatr_s$ act only on the solid variables $\X_h$ and $\llambda_h$ defined on $\T^\B_h$. All these matrices can be assembled in exact way provided that a sufficiently precise quadrature rule is employed. This fact is not anymore true if we look at $\Cmatr_f$, which aims at coupling the behavior of fluid and solid in the fictitious part of the fluid domain.
	%
	Possible techniques that can be adopted for this procedure have been
	presented and discussed with several numerical tests
	in~\cite{boffi2022interface}. In this section we only recall the main features
	of this computational procedure and, in the next sections, we investigate the
	quadrature error committed when the construction is not exact.
	
	The interface matrix originates from the form
	$\c(\mmu_h,\v_h(\Xbar))$, where we have to compute integrals on $\B$ involving $\mmu_h\in\LL_h$ and $\v_h\in\Vline_h$, that are discrete functions defined on two different meshes. Notice that the velocity-like function $\v_h$ is composed with the map $\Xbar$ to take into account the actual position of the solid body. To fix the ideas, we report in Figure~\ref{fig:mapping_tri} a simple example of a solid triangle immersed into the fluid mesh through $\Xbar$: the mismatch between the supports of fluid and mapped solid basis functions is evident.
	
	\begin{figure}
		\centering
		\subfloat[]{\includegraphics[width=0.55\textwidth]{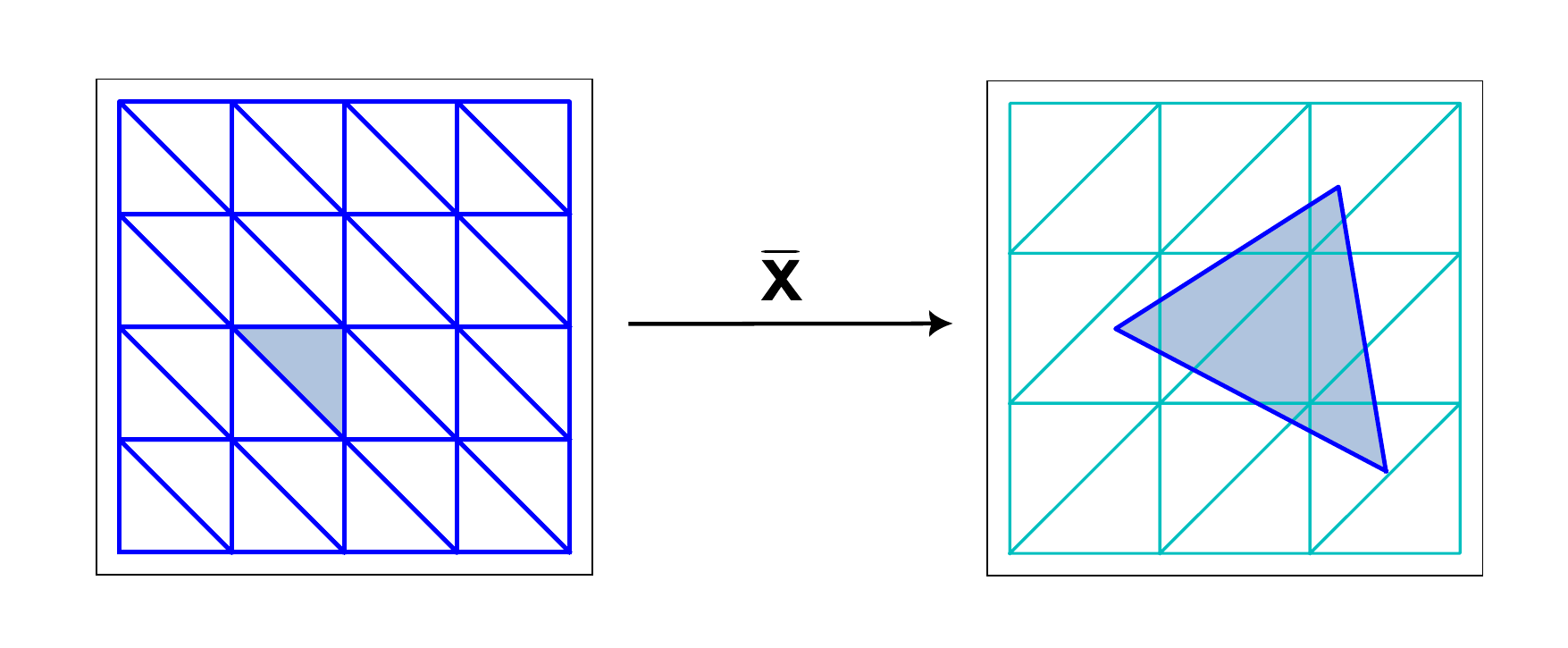}}\quad
		\subfloat[]{\includegraphics[width=0.31\textwidth]{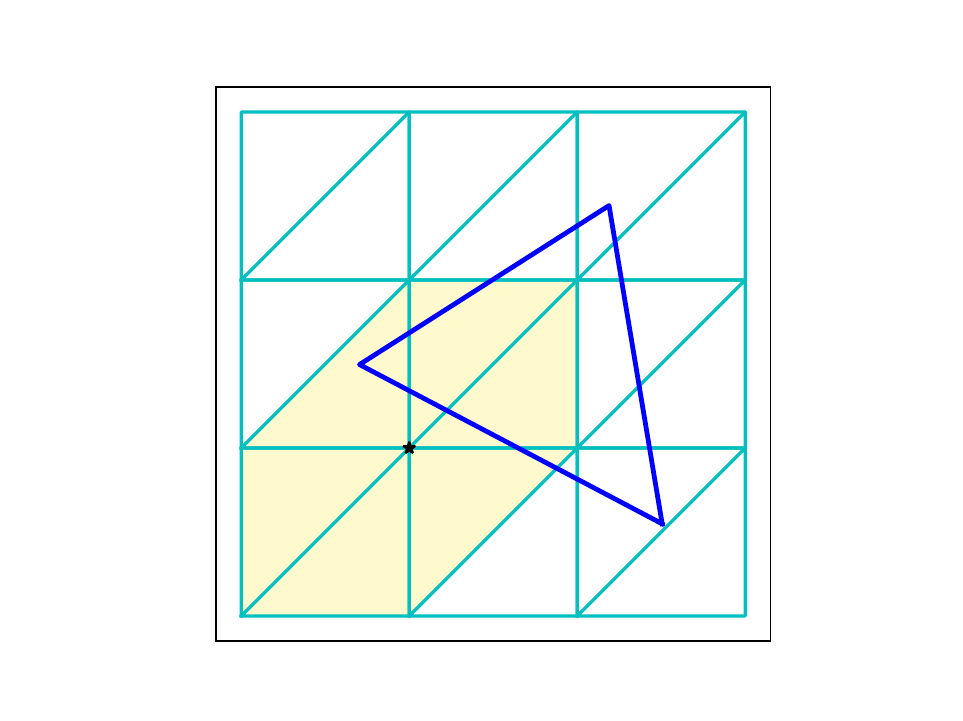}}
		\caption{On the left, mapping of a solid element into the fluid mesh. On the right, the yellow support of the fluid basis function associated with the starred node only partially matches the mapped solid triangle.}
		\label{fig:mapping_tri}
	\end{figure}
	
	In the following, we denote by $\{(\quadnode_k^0,\quadweigth_k^0)\}_{k=1}^{K_0}$ nodes and weights for a generic quadrature rule for $\LdBd$ inner product of functions, while, for the product of gradients, we use a rule with nodes and weights denoted by $\{(\quadnode_k^1,\quadweigth_k^1)\}_{k=1}^{K_1}$. Both formulas are defined on a generic triangle $\tri\in\T^\B_h$.
	
	The discrete counterparts of the two possible choices for the coupling term are the following:
	\begin{subequations}\label{eq:ch}
		\begin{equation}\label{eq:ch_l2}
			\c_0(\mmu_h,\v_h(\Xbar)) = \sum_{\tris\in\T^\B_h}\int_{\tris} \mmu_h\cdot \v_h(\Xbar)\,\ds,
		\end{equation}
		\begin{equation}\label{eq:ch_h1}
			\c_1(\mmu_h,\v_h(\Xbar)) 
			= \sum_{\tris\in\T^\B_h}\int_{\tris} \left(\mmu_h\cdot \v_h(\Xbar) + \grads\mmu_h:\grads\v_h(\Xbar)\right)\,\ds.
		\end{equation}
	\end{subequations}
	
	The exact computation of these terms can be carried out by making use
	of a composite quadrature rule, taking into account that $\v_h(\Xbar)$ is a piecewise polynomial in each $\tris\in\T^\B_h$. To this aim, one should compute the intersection between the fluid and the mapped solid mesh.
	
	Otherwise, we can proceed in an approximate way directly using a quadrature formula, without paying attention to the fact that $\v_h(\Xbar)$ is a piecewise polynomial in each $\tris\in\T_h^\B$. We have for \eqref{eq:ch}
	\begin{subequations}\label{eq:noint}
		\begin{equation}\label{eq:noint_l2}
			\c_{0,h}(\mmu_h,\v_h(\Xbar)) = \sum_{\tris\in\T^\B_h}\abs{\tris} \sum_{k=1}^{K_0} \quadweigth_k^0\,\mmu_h(\quadnode_k^0)\cdot \v_h(\Xbar(\quadnode_k^0)),
		\end{equation}
		\begin{equation}\label{eq:noint_h1}
			\begin{aligned}
				&\c_{1,h}(\mmu_h,\v_h(\Xbar)) \\
				&\quad= \sum_{\tris\in\T^\B_h}\abs{\tris}\left(  \sum_{k=1}^{K_0} \quadweigth_k^0\,\mmu_h(\quadnode_k^0)\cdot \v_h(\Xbar(\quadnode_k^0)) + \sum_{k=1}^{K_1} \quadweigth_k^1\,\grads\mmu_h(\quadnode_k^1):\grads\v_h(\Xbar(\quadnode_k^1))\right).
			\end{aligned}
		\end{equation}
	\end{subequations}
	
	In the following, the notation $\c_h(\cdot,\cdot)$ is used for both cases, as in the continuous setting. It is clear that the approach in \eqref{eq:noint} produces a quadrature error, which is the subject of our study.
	
	\section{The effect of numerical integration: abstract results}\label{sec:abstract-results}
	
	In this section, following \cite{brezzi74,mixedFEM}, we present a
	general result regarding the use of numerical integration to approximate the
	coupling terms and the right hand sides of our problem. We denote by
	$(\cdot,\cdot)_{h,D}$ the discrete $L^2(D)$ inner product obtained using
	numerical integration. Moreover, for all $\mmu_h\in\LL_h$ and $\Y\in\Hub$,
	$\crhs{\mmu_h}{\Y}$ stands for the approximation of $\c(\mmu_h,\Y)$ by means of a quadrature rule. 
	We recall that we are assuming $\S_h=\LL_h$. Problem~\ref{pro:discrte_stationary_general} with inexact integration reads as follow.	
	
	\begin{pro}
		\label{pro:approx_stationary_general}
		Let $\Xbar\in\Winftyd$ be invertible with Lipschitz inverse. Given $\f\in \LdOd$, $\g\in \LdBd$ and $\d\in\Hub$, find $(\u_h^\star,p_h^\star)\in\Vline_h \times Q_h$, $\X_h^\star\in \Sline_h$ and $\llambda_h^\star\in \LL_h$, such that
		\begin{subequations}
			\begin{align}
				&\a_{f}(\u_h^\star,\v_h)-(\div\v_h,p_h^\star)_\Omega+\c_h(\llambda_h^\star,\v_h(\Xbar))=(\f,\v_h)_{h,\Omega} && 
				\forall\v_h\in\Vline_h\\
				&\label{div_ex}(\div\u_h^\star,q_h)_\Omega=0&&\forall q_h\in Q_h\\
				& \a_{s}(\X_h^\star,\Y_h)-\c(\llambda_h^\star,\Y_h)=(\g,\Y_h)_{h,\B}&&\forall\Y_h\in \Sline_h \\
				& \c_h(\mmu_h,\u_h^\star(\Xbar))-\c(\mmu_h,\X_h^\star)=\crhs{\mmu_h}{\d}&&\forall\mmu_h\in \LL_h.
			\end{align}
		\end{subequations}
	\end{pro}
	
	Similarly to the continuous and discrete case, we now introduce a new bilinear form ${\A_h:\VV_h\times\VV_h\rightarrow\R}$ given by
	\begin{equation}
		\begin{aligned}
			&\A_h(\U_h,\V_h) = \a_{f}(\u_h,\v_h) + \a_{s}(\X_h,\Y_h)\\
			&\hspace{2cm} + \c_h(\llambda_h,\v_h(\Xbar)) - \c(\llambda_h,\Y_h) - \c_h(\mmu_h,\u_h(\Xbar)) + \c(\mmu_h,\X_h)\\
		\end{aligned}
	\end{equation}
	for all $\U_h,\,\V_h\in\VV_h$. We do not need to introduce $\BB_h$ since $\BB$ can be computed exactly.
	
	Replacing $\A$ with $\A_h$, we can rewrite~\eqref{eq:pb_disc_sp} in matrix form as follows
	\begin{equation}\renewcommand{\arraystretch}{1.5}
		\label{eq:matrix2}
		\left[\begin{array}{@{}ccc|c@{}}
			\Amatr_f & 0 & \Cmatr_{f,h}^\top & \Bmatr^\top\\ 
			0  & \Amatr_s & -\Cmatr_s^\top &0 \\
			\Cmatr_{f,h} &  -\Cmatr_s & 0 & 0\\
			\hline
			\Bmatr & 0 & 0 & 0 \\
		\end{array}\right]
		\left[ \begin{array}{c}
			\u_h^\star\\
			\X_h^\star\\
			\llambda_h^\star\\
			\hline
			p_h^\star\\
		\end{array}\right]=
		\left[ \begin{array}{c}
			\f_h\\
			\g_h\\
			\d_h\\
			\hline
			\mathbf{0}
		\end{array}\right],
	\end{equation}\renewcommand{\arraystretch}{1.15}
	where $\Cmatr_{f,h}$, $\f_h$, $\g_h$ and $\d_h$ refer to terms computed with inexact integration.
	
	Moreover, we define the operator $\Lcal_h^\star:\VV_h\times Q_h\rightarrow\VV_h^\prime\times Q_h^\prime$ by
	\begin{equation}
		\langle \Lcal_h^\star(\U_h,p_h),(\V_h,q_h) \rangle = \A_h(\U_h,\V_h) + \BB(\V_h,p_h) + \BB(\U_h,q_h)\quad\forall\V_h\in\VV_h, \forall q_h\in Q_h
	\end{equation}
	in such a way that $(\U_h^\star,p_h^\star)$ represents the solution of Problem~\ref{pro:approx_stationary_general}, that is ${\Lcal_h^\star(\U_h^\star,p_h^\star)=(\f_h,\g_h,\d_h,\mathbf{0})}$.
	
	Our goal is to measure the distance between the approximated solution $(\u_h^\star,p_h^\star,\X_h^\star,\llambda_h^\star)$ and the continuous one $(\u,p,\X,\llambda)$. 
	The error analysis relies again on inf-sup conditions similar to \eqref{eq:infsup_discrete}. Since the bilinear form $\BB$ is computed exactly, we assume that the discrete form $\A_h$ satisfied the inf-sup condition; the proof is postponed to Section~\ref{sec:infsup}. 
	\begin{assump}\label{assump:infsup_appr}
		$\A_h$ satisfies the inf-sup condition, that is there exists a positive constant $\theta^\star$ independent of h such that
		\begin{equation}
			\inf_{\U_h\in\K_{\BB,h}}\sup_{\V_h\in\K_{\BB,h}}\frac{\A_h(\U_h,\V_h)}{\normiii{\U_h}\normiii{\V_h}}\ge\theta^\star,
		\end{equation}
		where $\K_{\BB,h}$ was defined in~\eqref{eq:disc_ker}.
	\end{assump}
	
	Thanks to the results in~\cite{xu2003some}, Assumption~\ref{assump:infsup_appr} and the first inf-sup condition in~\eqref{eq:infsup_discrete}~\cite{brezzi74} imply the following inf-sup condition~\cite{babuvska1973finite} 
	\begin{equation}\label{eq:strang_1}
		(\normiii{\W_h}^2 + \norm{r_h}_{0,\Omega}^2)^{1/2} \le M \sup_{\substack{\V_h\in\VV_h\\q_h\in Q_h}} \frac{ \langle\Lcal_h^\star(\W_h,r_h),(\V_h,q_h)\rangle}{(\normiii{\V_h}^2+\norm{q_h}^2_{0,\Omega})^{1/2}}	
	\end{equation}
	for $\W_h\in\VV_h$ and $r_h\in Q_h$, with the constant $M$ depending on the inf-sup constants $\tilde{\eta}$ and $\theta^\star$. We observe that, in this case, $\langle\cdot,\cdot\rangle$ denotes the duality pairing between $\VV_h\times Q_h$ and $\VV_h^\prime\times Q_h^\prime$.
	
	Exploiting that $(\U_h,p_h)$ is solution to the discrete Problem~\ref{pro:discrte_stationary_general}, we get
	\begin{equation}\label{eq:strang_2}
		\begin{aligned}
			\langle\Lcal_h^\star(\U_h-\U_h^\star,&p_h-p_h^\star),(\V_h,q_h)\rangle
			= \langle(\Lcal_h^\star-\Lcal_h)(\U_h,p_h),(\V_h,q_h)\rangle \\
			&\hspace{-0.35cm}+ [(\f,\v_h)_\Omega - (\f,\v_h)_{h,\Omega}]  + [(\g,\Y_h)_\B - (\g,\Y_h)_{h,\B}] + [\c(\d,\mmu_h)-\crhs{\d}{\mmu_h}].
		\end{aligned}
	\end{equation}
	The definitions of $\Lcal_h$ and $\Lcal_h^\star$ imply
	\begin{equation}\label{eq:strang_3}
		\begin{aligned}
			\langle (\Lcal_h^\star-\Lcal_h)(\U_h,p_h),(\V_h,q_h)\rangle &=
			\c_h(\llambda_h,\v_h(\Xbar))-\c(\llambda_h,\v_h(\Xbar))\\ &\quad+\c_h(\mmu_h,\u_h(\Xbar))-\c(\mmu_h,\u_h(\Xbar)).
		\end{aligned}
	\end{equation}
	Therefore, we obtain
	\begin{equation}\label{eq:strang_4}
		\begin{aligned}
			&\left(\normiii{\U_h-\U_h^\star}^2 + \norm{p_h-p_h^\star}^2_{0,\Omega}\right)^{1/2} \\
			&\quad\le M\sup_{\substack{\V_h\in\VV_h\\q_h\in Q_h}}\biggl(
			\frac{|\c(\mmu_h,\u_h(\Xbar))-\c_h(\mmu_h,\u_h(\Xbar)| + |\c(\llambda_h,\v_h(\Xbar))-\c_h(\llambda_h,\v_h(\Xbar))|}{(\normiii{\V_h}^2+\norm{q_h}^2_{0,\Omega})^{1/2}}\\
			&\qquad\quad\quad+ \frac{ \abs{(\f,\v_h)_\Omega - (\f,\v_h)_{h,\Omega}}+\abs{(\g,\Y_h)_\B - (\g,\Y_h)_{h,\B}} + \abs{\c(\d,\mmu_h)-\crhs{\d}{\mmu_h}} }{(\normiii{\V_h}^2+\norm{q_h}^2_{0,\Omega})^{1/2}}\biggl).
		\end{aligned}
	\end{equation}
	
	Before presenting the final estimate, we introduce the following assumption which will be then proved in the next section.
	\begin{assump}\label{assump:quad_er}
		There exist positive functions $\rho_{\ell}(h)$, $\ell=0,1$, tending to zero as $h$ goes to zero, such that for all $\mmu_h\in\LL_h$ and $\v_h\in\Vline_h$ it holds
		\begin{equation*}
			|\c_\ell(\mmu_h,\v_h(\Xbar))-\c_{\ell,h}(\mmu_h,\v_h(\Xbar)| \le C \rho_\ell(h)\,\|\mmu_h\|_{\ell,\B}\|\v_h\|_{1,\Omega}
		\end{equation*}
		with $\rho_{\ell}(h)$ depending on the definition of $\c$.
	\end{assump}
	It turns out that if $\f$, $\g$, and $\d$ are sufficiently regular, then the second term on the right hand side of~\eqref{eq:strang_4} can be estimated by the classical theory provided that the quadrature rule is appropriately chosen (see, for instance, \cite{ciarlet2002finite}).
	\begin{lem}
		Let $s>d/2$ be an integer number. Let $\f\in\mathbf{H}^s(\Omega)$,
		$\g\in \mathbf{H}^s(\B)$ and consider a quadrature rule exact for polynomials
		of degree $2s-2$. Then it holds
		\begin{equation}\label{eq:rhs_fg}
			\begin{aligned}
				&\abs{(\f,\v_h)_\Omega - (\f,\v_h)_{h,\Omega}}\le Ch_\Omega^s\,|\f|_{s,\Omega}\|\v_h\|_{1,\Omega}&&\forall\v_h\in\Vline_h\\
				&\abs{(\g,\Y_h)_\B - (\g,\Y_h)_{h,\B}}\le Ch_\B^s\,|\g|_{s,\B}\|\Y_h\|_{1,\B}&&\forall\Y_h\in\Sline_h.\\
			\end{aligned}
		\end{equation}
		Moreover, given $\d\in\mathbf{H}^r(\B)$ with $r>d/2+\ell$,
		$\mmu_h$ piecewise linear function in $\B$, and a quadrature rule exact for
		polynomials of degree $2$, the following bound holds
		\begin{equation}\label{eq:rhs_d}
			|\c_\ell(\mmu_h,\d)-\crhsL{\mmu_h}{\d}|\le
			Ch_\B^{r-\ell}\DB|\d|_{r,\B}\BD\|\mmu_h\|_{\ell,\B}\qquad \forall\mmu_h\in\LL_h.
		\end{equation}
	\end{lem}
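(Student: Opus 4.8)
The plan is to prove the three estimates in \eqref{eq:rhs_fg} and \eqref{eq:rhs_d} by a standard Bramble--Hilbert / reference-element scaling argument, treating each quadrature functional as a local error that vanishes on polynomials of the appropriate degree. For the first two bounds, I would work element by element. Fix $\tri\in\T^\Omega_h$ and define the local quadrature error $E_\tri(\f\cdot\v_h)=\int_\tri \f\cdot\v_h\,\dx - (\f,\v_h)_{h,\tri}$. Since the rule is exact for polynomials of degree $2s-2$ and $\v_h$ restricted to $\tri$ is a polynomial of degree at most $1$, the product $\f\cdot\v_h$ loses only the factor coming from $\f$; the key point is that $E_\tri$ annihilates any polynomial approximation of $\f$ of degree $s-1$. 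Pulling back to the reference triangle $\reftri$ via the affine map, applying the Bramble--Hilbert lemma to the bounded linear functional $\hat\phi\mapsto \hat E_{\reftri}(\hat\phi\,\hat\v_h)$ on $H^s(\reftri)$, and scaling back, one obtains the local bound $|E_\tri(\f\cdot\v_h)|\le C h_\tri^{s}\,|\f|_{s,\tri}\,\|\v_h\|_{1,\tri}$. Summing over $\tri$ and using Cauchy--Schwarz on the element contributions (together with $h_\tri\le h_\Omega$) yields the first line of \eqref{eq:rhs_fg}; the second line is identical with $\Omega$ replaced by $\B$ and the fluid mesh replaced by $\T^\B_h$.

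For \eqref{eq:rhs_d}, I would proceed in the same spirit but now separate the two cases $\ell=0,1$ encoded in the definition of $\c$. When $\c=\c_0$ we only need the $L^2$ term $(\mmu_h,\d)_\B$, so the analysis is exactly that of the previous paragraph with $\f$ replaced by $\d$ and $\v_h$ by $\mmu_h$: a quadrature rule exact for degree $2$ annihilates a linear reconstruction of $\d$, giving the factor $h_\B^{r}$ against $\|\mmu_h\|_{0,\B}$. However, the stated exponent is $h_\B^{r-\ell}$, and for $\ell=1$ (the $\c_1$ case) the gradient term $(\grads\mmu_h,\grads\d)_\B$ enters: here one integrand factor is $\grads\mmu_h$, which is piecewise constant, while the other is $\grads\d\in\mathbf{H}^{r-1}(\B)$. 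Applying Bramble--Hilbert to $\grads\d$ loses one derivative relative to the $L^2$ case, and the $\|\grads\mmu_h\|$ factor is controlled by $\|\mmu_h\|_{1,\B}$, which accounts precisely for the shift from $h_\B^{r}\|\mmu_h\|_{0,\B}$ to $h_\B^{r-1}\|\mmu_h\|_{1,\B}$. Unifying both cases gives the exponent $r-\ell$ and the norm $\|\mmu_h\|_{\ell,\B}$ as claimed.

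The main bookkeeping obstacle is the interplay between the fractional index $r$ (which may be non-integer) in $\d\in\mathbf{H}^r(\B)$ and the integer requirement in the Bramble--Hilbert estimate; one must invoke the fractional version of the lemma, or interpolate between the integer estimates at $\lfloor r\rfloor$ and $\lceil r\rceil$, to obtain the sharp power $h_\B^{r-\ell}$ rather than a rounded-down exponent. A secondary technical point is the shape-regularity hypothesis on both meshes, which is needed so that the constants arising from the affine scaling of $H^s$ and $H^1$ seminorms are uniform in $h$; I would state this dependence explicitly and absorb it into the generic constant $C$. Apart from these, the argument is the classical one for quadrature-induced consistency errors (see \cite{ciarlet2002finite}), and no term here is genuinely delicate, since all the coupling-term difficulties (composition with $\Xbar$, non-matching grids) are isolated in Assumption~\ref{assump:quad_er} rather than in these right-hand-side estimates.
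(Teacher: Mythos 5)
Your proposal is correct in substance and follows the same classical route as the paper: element-wise quadrature error functionals, exactness of the rule on products of polynomials, and polynomial-approximation estimates for the smooth factor. The differences are in the tools. For \eqref{eq:rhs_fg} the paper gives no argument at all — it cites \cite[Chap.~4, Sect.~4.1]{ciarlet2002finite} — so your Bramble--Hilbert scaling argument is essentially an unfolding of that citation. For \eqref{eq:rhs_d} the paper avoids Bramble--Hilbert and argues explicitly: it inserts the piecewise linear interpolant $\d_I$ of $\d$, notes that $E_\tri(\mmu_h\cdot\d_I)=0$ by exactness of the degree-$2$ rule (both factors are linear), and bounds the two residual terms, namely $\int_\tri\mmu_h\cdot(\d-\d_I)\,\ds$ by Cauchy--Schwarz and interpolation, and the quadrature sum by a discrete Cauchy--Schwarz in which exactness is used again to identify $\bigl(|\tri|\sum_k\quadweigth_k|\mmu_h(\quadnode_k)|^2\bigr)^{1/2}=\|\mmu_h\|_{0,\tri}$, leaving a factor $|\tri|^{1/2}\|\d-\d_I\|_{\infty,\tri}\le C|\tri|^{1/2}h_\B^{r-1}|\d|_{r,\tri}$. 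Your abstract functional-analytic version and the paper's explicit interpolant version are interchangeable here; the paper's choice makes the $|\tri|^{1/2}h_\B^{r-1}$ bookkeeping and the role of the quadrature nodes transparent, while yours generalizes more readily. One caveat in your sketch of \eqref{eq:rhs_fg}: the claim that $E_\tri$ annihilates every degree-$(s-1)$ polynomial approximation of $\f$ requires $s\ge2$; for $s=1$ the product of $\phi\in\Pcal_0(\tri)$ with $\v_h\in[\Pcal_1(\tri)]^2$ has degree $1$, whereas the rule is only exact for degree $2s-2=0$. The standard proof (this is precisely why Ciarlet's hypothesis $\Pcal_{2s-2}$ rather than $\Pcal_{2s-1}$ suffices) additionally splits the polynomial factor $\v_h$ into its mean and a remainder controlled by $|\v_h|_{1,\tri}$, applying Bramble--Hilbert separately to each piece; as stated, your one-step annihilation argument would fail in the lowest-order case, which is the one actually relevant for the piecewise linear velocities used in this paper.
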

	
	\begin{proof}
		The estimates~\eqref{eq:rhs_fg} are standard results, see~\cite[Chap. 4, Sect. 4.1]{ciarlet2002finite}. Concerning~\eqref{eq:rhs_d}, we distinguish the cases $\ell=0,1$. 
		
		If $\ell=0$, $\c_0(\cdot,\cdot)=(\cdot,\cdot)_\B$ and $\crhsZ{\cdot}{\cdot}$ is the corresponding approximation with a quadrature rule exact for polynomials of degree $2$. For all $\mmu_h$, we have
		\begin{equation*}
			\c_0(\mmu_h,\d) - \crhsZ{\mmu_h}{\d} = \sum_{\tri\in\T^\B_h} \left( \int_\tri \mmu_h\cdot\d\,\ds - |\tri|\sum_{k=1}^K \quadweigth_k\mmu_h(\quadnode_k)\cdot\d(\quadnode_k) \right).
		\end{equation*}
		We estimate the contribution of each element $\tri$ as follows. Let $\d_I$ be the interpolant of $\d$; taking into account the degree of exactness of the quadrature rule, we have 
		\[
		\begin{aligned}
			&\int_\tri \mmu_h\cdot\d\,\ds - |\tri|\sum_{k=1}^K \quadweigth_k\mmu_h(\quadnode_k)\cdot\d(\quadnode_k)\\
			&\quad= \int_\tri \mmu_h\cdot(\d-\d_I)\,\ds + |\tri|\sum_{k=1}^K \quadweigth_k\mmu_h(\quadnode_k)\cdot(\d_I(\quadnode_k)-\d(\quadnode_k))\\
			&\DB\quad\le Ch_\B^r \|\mmu_h\|_{0,\tri}|\d|_{r,\tri}+
			\left(|\tri|\sum_{k=1}^K\quadweigth_k|\mmu_h(\quadnode_k)|^2\right)^{1/2}
			\left(|\tri|\sum_{k=1}^K\quadweigth_k|\d_I(\quadnode_k)-\d(\quadnode_k)|^2\right)^{1/2}\BD\\
			&\DB\quad\le Ch_\B^r \|\mmu_h\|_{0,\tri}|\d|_{r,\tri}+
			|\tri|^{1/2}\|\mmu_h\|_{0,\tri}\|\d_I-\d\|_{\infty,\tri}\BD\\
			&\DB\quad\le Ch_\B^r \|\mmu_h\|_{0,\tri}|\d|_{r,\tri}+
			C|\tri|^{1/2}\|\mmu_h\|_{0,\tri}|\tri|^{-1/2}h_\B^r|\d|_{r,\tri}\BD\\
			&\DB\quad\le Ch_\B^r\|\mmu_h\|_{0,\tri}|\d|_{r,\tri}.\BD
		\end{aligned}
		\]
		The last inequality has been obtained using standard interpolation estimates,
		the discrete Cauchy--Schwarz inequality, and the precision of the
		quadrature rule. This implies~\eqref{eq:rhs_d} for $\ell=0$.
		
		For $\ell=1$, $\c_1(\mmu_h,\d)=(\mmu_h,\d)_\B+(\grads\mmu_h,\grads\d)_\B$ and $\crhsU{\mmu_h}{\d}$ is its approximation by means of the same quadrature rule. Hence by using the same argument as before, we arrive at~\eqref{eq:rhs_d} for $\ell=1$.
	\end{proof}
	
	Hence, the final result reads as follows.
	\begin{teo}\label{theo:strang}
		Under Assumption \ref{assump:infsup_appr} and
		Assumption~\ref{assump:quad_er}, if $(\u,p,\X,\llambda)$ is the solution to
		Problem \ref{pro:stationary_general}, \DB $(\u_h,p_h,\X_h,\llambda_h)$ is the solution to
		Problem \ref{pro:discrte_stationary_general}, \BD and
		$(\u_h^\star,p_h^\star,\X_h^\star,\llambda_h^\star)$ is the solution to
		Problem \ref{pro:approx_stationary_general}, then the following error estimate
		holds for $\ell=0,1$
		\begin{equation*}
			\begin{aligned}
				&\norm{\u-\u_h^\star}_{1,\Omega} + \norm{p-p_h^\star}_{0,\Omega} + \norm{\X-\X_h^\star}_{1,\B} + \norm{\llambda-\llambda_h^\star}_{\LL} \\
				&\hspace{0.2cm}\le C\,\big(\inf_{\v_h\in\Vline_h}\norm{\u-\v_h}_{1,\Omega} + \inf_{q_h\in Q_h}\norm{p-q_h}_{0,\Omega} + \inf_{\Y_h\in\Sline_h}\norm{\X-\Y_h}_{1,\B} + \inf_{\mmu_h\in\LL_h}\norm{\llambda-\mmu_h}_{\LL} \\
				&\hspace{1.5cm}+ (h_\B^{min})^{\ell-1}\rho_\ell(h)\|\u_h\|_{1,\Omega}+\rho_{\ell}(h)\,\|\llambda_h\|_{\ell,\B}
				+ h_\Omega^s\,|\f|_{s,\Omega} + h_\B^s\,|\g|_{s,\B} +\,
				h_\B^{r-\ell}\|\d\|_{r,\B}\big).
			\end{aligned}
		\end{equation*}
	\end{teo}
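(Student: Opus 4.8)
The plan is to derive the estimate as a Strang-type argument, inserting the solution $(\u_h,p_h,\X_h,\llambda_h)$ of the exactly integrated discrete Problem~\ref{pro:discrte_stationary_general} and splitting the error into a \emph{discretization} part and a \emph{consistency} (quadrature) part. Writing $\U=(\u,\X,\llambda)$, $\U_h=(\u_h,\X_h,\llambda_h)$ and $\U_h^\star=(\u_h^\star,\X_h^\star,\llambda_h^\star)$, and recalling that the sum of the four norms in the statement is equivalent to $\normiii{\,\cdot\,}+\norm{\,\cdot\,}_{0,\Omega}$, the triangle inequality gives
\begin{equation*}
\normiii{\U-\U_h^\star} + \norm{p-p_h^\star}_{0,\Omega} \le \normiii{\U-\U_h} + \norm{p-p_h}_{0,\Omega} + \normiii{\U_h-\U_h^\star} + \norm{p_h-p_h^\star}_{0,\Omega}.
\end{equation*}
The first two terms are controlled directly by Theorem~\ref{theo:brezzi}, which produces exactly the four best-approximation infima in the claimed bound. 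It thus remains to estimate the consistency part $\normiii{\U_h-\U_h^\star} + \norm{p_h-p_h^\star}_{0,\Omega}$.

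For that part I would invoke the reduction already carried out in~\eqref{eq:strang_4}, which bounds $(\normiii{\U_h-\U_h^\star}^2+\norm{p_h-p_h^\star}^2_{0,\Omega})^{1/2}$ by $M$ times a supremum over the test pair $(\V_h,q_h)$ of the coupling- and right-hand-side quadrature errors. The two coupling numerators are handled by Assumption~\ref{assump:quad_er}, noting that now $\u_h$ and $\llambda_h$ are the \emph{fixed} exact-discrete solution while $\mmu_h$ and $\v_h$ are the multiplier and velocity components of the test function $\V_h$:
\begin{equation*}
\abs{\c(\mmu_h,\u_h(\Xbar))-\c_h(\mmu_h,\u_h(\Xbar))} + \abs{\c(\llambda_h,\v_h(\Xbar))-\c_h(\llambda_h,\v_h(\Xbar))} \le C\rho_\ell(h)\left(\norm{\mmu_h}_{\ell,\B}\norm{\u_h}_{1,\Omega} + \norm{\llambda_h}_{\ell,\B}\norm{\v_h}_{1,\Omega}\right).
\end{equation*}
Bounding $\norm{\mmu_h}_{\ell,\B}$ and $\norm{\v_h}_{1,\Omega}$ by $\normiii{\V_h}\le(\normiii{\V_h}^2+\norm{q_h}^2_{0,\Omega})^{1/2}$ cancels the test-function dependence in the supremum and leaves precisely $C\rho_\ell(h)\norm{\u_h}_{1,\Omega}$ and $C\rho_\ell(h)\norm{\llambda_h}_{\ell,\B}$.

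The right-hand-side numerators are treated the same way using the preceding Lemma: estimate~\eqref{eq:rhs_fg} supplies the $h_\Omega^s\abs{\f}_{s,\Omega}$ and $h_\B^s\abs{\g}_{s,\B}$ contributions, while~\eqref{eq:rhs_d} supplies the $h_\B^{r-\ell}\norm{\d}_{r,\B}$ contribution, again dividing the test-function norms into the denominator. Combining the discretization bound with this consistency bound and absorbing the constant $M$ (which depends only on $\tilde{\eta}$ and $\theta^\star$) into $C$ yields exactly the stated estimate. The proof is therefore essentially an assembly of Theorem~\ref{theo:brezzi}, the reduction~\eqref{eq:strang_4}, Assumption~\ref{assump:quad_er}, and the Lemma, with no new deep argument required.

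The main obstacle is a norm-matching check in the case $\ell=0$: the combined norm $\normiii{\,\cdot\,}$ measures the multiplier in $\norm{\,\cdot\,}_{\LL}$, whereas Assumption~\ref{assump:quad_er} and the Lemma produce $\norm{\mmu_h}_{0,\B}=\norm{\mmu_h}_{\LdBd}$. For the choice $(\LL_0,\c_0)$ one must verify that $\norm{\mmu_h}_{0,\B}$ is controlled by $\normiii{\V_h}$ uniformly in $h$; this is immediate when, via the identification~\eqref{eq:c_l2_inner}, the discrete $\LL_0$-norm is taken as the $L^2$-norm on $\LL_h$, but would require an inverse inequality (with possibly $h$-dependent constant to be absorbed into $\rho_0(h)$) if the genuine dual norm of $\Hubd$ were used. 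For $\ell=1$ the norms coincide, $\norm{\mmu_h}_{1,\B}=\norm{\mmu_h}_{\LL_1}$, and no such issue arises.
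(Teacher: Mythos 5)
Your proposal is correct and follows essentially the same route as the paper: the theorem is obtained exactly by combining the triangle inequality with Theorem~\ref{theo:brezzi} for the discretization part and the inf-sup reduction~\eqref{eq:strang_1}--\eqref{eq:strang_4}, Assumption~\ref{assump:quad_er}, and the right-hand-side lemma (\eqref{eq:rhs_fg}, \eqref{eq:rhs_d}) for the consistency part, with the test-function norms absorbed into the denominator. The norm-matching issue you flag for $\ell=0$ (the mismatch between $\norm{\mmu_h}_{0,\B}$ and the dual norm $\norm{\mmu_h}_{\LL_0}$) is a genuine subtlety, and it is resolved in the paper exactly as you suggest: via the inverse inequality of Proposition~\ref{prop:inv_in}, applied in the corollary following Proposition~\ref{prop:l2_final}, at the price of an $h$-dependent factor $1/h_\B^{min}$.
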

	
	\section{Quadrature error for the coupling term}\label{sec:quad_error}
	
	The goal of this section is to estimate the quadrature error occurring
	between the exact bilinear form $\c$ and its numerical counterpart $\c_h$
	computed with approximate integration. More precisely, we want to check
	Assumption~\ref{assump:quad_er} and determine $\rho_{\ell}(h)$ in terms of $h_\Omega$ and $h_\B$.
	
	From now on, we consider the case $d=2$ with triangular
	meshes and we use the Bercovier--Pironneau element for the Stokes part of the
	system and continuous piecewise linear elements for both the solid unknown and the Lagrange multiplier (see~\eqref{eq:element}).
	
	We introduce the following notion of quadrature error functional over a
	generic element $\tri$~\cite{ciarlet2002finite}.
	
	\begin{defi} Given a generic function $f$ and a quadrature rule with nodes and weights $\{(\quadnode_k,\quadweigth_k)\}_{k=1}^K$, the quadrature error functional $E_\tri$ over $\tri$ is defined as the difference between the exact integral and the numerical one, i.e. 
		\begin{equation}
			E_\tri(f) = \int_\tri f(\x)\,\dx - \abs{\tri}\sum_{k=1}^K \quadweigth_k f(\quadnode_k).
		\end{equation}
	\end{defi}
	
	The analysis on the quadrature error we are going to perform will strongly depend on the function $\Xbar$. Therefore, we introduce the following assumption, which is reasonable for the FSI application.
	
	\begin{assump}\label{assum:xbar}	
		We assume that $\Xbar\in\Sline_h$. This implies that it is linear in $\tri$ for all $\tri\in\T_h^B$. Consequently, the composed function
		$\v_h(\Xbar)$ is continuous and piecewise linear on
		$\tri$.
	\end{assump} 
	
	The next lemma states a fundamental inequality for continuous and piecewise linear functions $\v_h(\Xbar)$ that will be used later on.
	
	\begin{lem}
		Let $\tri\in\T_h^\B$ be such that $\Xbar(\tri)$ is not included in an element of
		$\T_{h/2}^\Omega$ and let Assumption~\ref{assum:xbar} hold true. Then $\v_h(\Xbar)\in \H^{1+s}(\tri)$ for $0\le s<1/2$ and 
		\[
		\|\grads\v_h(\Xbar)\|_{s,\tri}\le \frac{C}{1-2s} \|\grads\v_h(\Xbar)\|_{0,\tri}.
		\]
		\label{le:duran}
	\end{lem}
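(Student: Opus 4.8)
The plan is to reduce everything to the observation that $\grads\v_h(\Xbar)$ is \emph{piecewise constant} on $\tri$, and then to quantify the well-known borderline $\H^s$-regularity ($s<1/2$) of piecewise constant functions, tracking carefully the blow-up as $s\to 1/2$. First I would record the structure of the function. By Assumption~\ref{assum:xbar} the map $\Xbar$ is affine on $\tri$, so $\Xbar(\tri)$ is a triangle and the pullback of the fluid mesh $\T_{h/2}^\Omega$ through $\Xbar_{\mid\tri}$ subdivides $\tri$ into finitely many convex polygons $\tri_1,\dots,\tri_m$. On each $\tri_i$ the function $\v_h$ is affine, hence $\v_h(\Xbar)$ is continuous and piecewise affine on $\tri$, and its symmetric gradient $G:=\grads\v_h(\Xbar)=\sum_{i=1}^m G_i\,\chi_{\tri_i}$ is piecewise constant (the hypothesis that $\Xbar(\tri)$ is not contained in one fluid element guarantees $m\ge 2$; when $m=1$ the function is globally affine, $G$ is constant and the estimate is trivial). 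Since $\v_h(\Xbar)$ is Lipschitz it certainly belongs to $\H^1(\tri)$, so the whole claim reduces to showing $G\in\H^s(\tri)$ with $\abs{G}_{s,\tri}\le \frac{C}{\sqrt{1-2s}}\,\norm{G}_{0,\tri}$.

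Next I would expand the Gagliardo seminorm over the partition,
\[
\abs{G}_{s,\tri}^2=\sum_{i\ne j}\abs{G_i-G_j}^2\,I_{ij},\qquad
I_{ij}=\int_{\tri_i}\int_{\tri_j}\frac{d\x\,d\y}{\abs{\x-\y}^{2+2s}},
\]
so that the task becomes the estimation of the purely geometric kernels $I_{ij}$. The heart of the argument is a model computation near a single straight interface: if two pieces share an edge $\Gamma_{ij}$, then flattening the interface and integrating first in the tangential variables and then in the normal ones yields, for $0\le s<1/2$, a bound of the form $I_{ij}\le \frac{C}{1-2s}\,\abs{\Gamma_{ij}}\,(\operatorname{diam}\tri)^{1-2s}$. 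The factor $\frac{1}{1-2s}$ comes precisely from an integral of the type $\int_0^a\!\int_0^a (p+q)^{-1-2s}\,dp\,dq$, in which the spurious $1/s$ singularity cancels and leaves the single factor $(1-2s)^{-1}$. For pairs of pieces whose closures do not share an edge the kernel $\abs{\x-\y}^{-2-2s}$ stays bounded on $\tri_i\times\tri_j$, so $I_{ij}$ is controlled uniformly in $s$ and contributes only lower-order terms.

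Finally I would assemble the pieces. Using $\abs{G_i-G_j}^2\le 2\abs{G_i}^2+2\abs{G_j}^2$ together with shape-regularity and quasi-uniformity of the meshes (which bound both the number of pieces meeting a given $\tri_i$ and the geometric factors $\abs{\Gamma_{ij}}$, $\abs{\tri_i}$ against one another), the double sum telescopes, up to geometric constants absorbed in $C$, into $\sum_i\abs{G_i}^2\abs{\tri_i}=\norm{G}_{0,\tri}^2$ with the single factor $\frac{1}{1-2s}$, giving $\abs{G}_{s,\tri}^2\le \frac{C}{1-2s}\norm{G}_{0,\tri}^2$. Adding $\norm{G}_{0,\tri}^2$ and using $\frac{1}{\sqrt{1-2s}}\le\frac{1}{1-2s}$ for $s\in[0,1/2)$ then upgrades this to $\norm{G}_{s,\tri}\le \frac{C}{1-2s}\norm{G}_{0,\tri}$, which is the assertion, and in particular shows $\v_h(\Xbar)\in\H^{1+s}(\tri)$. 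I expect the main obstacle to be the interface kernel estimate: one has to localize correctly to the shared edge, carry out the tangential/normal integration so that exactly the power $(1-2s)^{-1}$ (and not a worse one) is exposed, and check that the constants produced by the summation over the finitely many interfaces do not reintroduce hidden dependence on $s$.
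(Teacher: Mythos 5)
Your route is genuinely different from the paper's, and it is worth saying how. The paper does not touch the pairwise interface integrals at all: it writes each scalar component $\pwg$ of $\grads\v_h(\Xbar)$ as $\sum_j\tilde\pwg_j$, where $\tilde\pwg_j$ is the extension by zero of the constant piece $\pwg_j$ on the polygon $P_j$, invokes the extension theorem of Lions--Magenes as quantified by Dur\'an in the form $\norm{\tilde\pwg_j}_{s,\tri}\le \frac{C}{1-2s}\norm{\pwg_j}_{s,P_j}$, and concludes because the $H^s$ norm of a \emph{constant} on $P_j$ is just its $L^2$ norm. Your proposal instead expands the Gagliardo seminorm into the kernels $I_{ij}$ and estimates them by hand; your flat-interface model computation, including the cancellation of the spurious $1/s$ factor leaving $(1-2s)^{-1}$, is correct and is essentially a self-contained proof of the cited extension bound for piecewise constants. (A minor omission: pieces sharing only a vertex do not have a bounded kernel, though their contribution can be handled by the same polar-coordinate computation.)

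The genuine gap is the assembly step. After $\abs{G_i-G_j}^2\le 2\abs{G_i}^2+2\abs{G_j}^2$, your argument needs, for every piece, $\sum_j\abs{\Gamma_{ij}}\,(\operatorname{diam}\tri)^{1-2s}\le C\,\abs{\tri_i}$, and you attribute this to shape regularity and quasi-uniformity of the two meshes. That is not available: the pieces are intersections of $\Xbar(\tri)$ with fluid elements and inherit \emph{no} shape regularity from the parent meshes. Whenever an edge of $\Xbar(\tri)$ runs close and nearly parallel to a fluid edge, a piece is a sliver of width $\epsilon$ arbitrarily small compared with its length $L$, and then the left-hand side is of order $L\,(\operatorname{diam}\tri)^{1-2s}$ while $\abs{\tri_i}=L\epsilon$, so the ratio blows up as $\epsilon\to0$ uniformly in the quality of both meshes. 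Moreover, even for perfectly shape-regular pieces of size $\sim h_\tri$ the comparison reads $h_\tri^{2-2s}\le C h_\tri^{2}$, which costs a factor $h_\tri^{-2s}$ rather than a constant: the two sides of the target inequality scale differently under dilation ($\abs{G}_{s,\tri}$ scales like $\lambda^{1-s}$, $\norm{G}_{0,\tri}$ like $\lambda$), so your route, carried out honestly, can only deliver $\abs{G}_{s,\tri}\le C\,h_\tri^{-s}(1-2s)^{-1/2}\norm{G}_{0,\tri}$ for regular partitions, with further degeneration for slivers. This is exactly the point where the paper's proof forks away from yours: by bounding each extension by zero against $\norm{\pwg_j}_{0,P_j}$ through the cited theorem, no comparison between interface lengths and areas ever appears, and the whole geometric difficulty is absorbed into the constant of that theorem, whose dependence on the shape and size of each $P_j$ is not re-derived there --- your explicit computation is precisely what probes that constant, and it shows the dependence is real. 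So the gap cannot be closed by more careful bookkeeping of the $I_{ij}$; you would either have to prove the extension estimate with a constant uniform over all intersection polygons (which your own sliver analysis shows is delicate at best), or defer to the cited result as the paper does.
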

	
	\begin{proof}
		Let $\pwg$ be one of the components of $\grads\v_h$. Since $\tri\in\T_h^\B$ is such
		that $\Xbar(\tri)$ is not included in an element of $\T_{h/2}^\Omega$, we can
		subdivide $\tri$ in polygons $P_j$ with $j=1,\dots,J$ so that
		$T=\bigcup_{j=1}^J P_j$ and $\pwg$ is constant on each $P_j$. We set $\pwg_j=\pwg|_{P_j}$. Since, 
		${H^1(P_j)\subset H^s(P_j)}$, for $0\le s<1/2$, we have that the extension by
		zero $\tilde\pwg_j$ of $\pwg_j\in H^s(P_j)$ belongs to $H^s(\tri)$ with the
		following bound, see~\cite[Chap.~1, Th.~11.4]{lions2012non} and~\cite[Sect.~3.1]{duran2020analysis} 
		\[
		\|\tilde\pwg_j\|_{s,\tri}\le \frac{C}{1-2s}\|\pwg_j\|_{s,P_j}.
		\]
		Then $\pwg=\sum_{j=1}^J\tilde\pwg_j$ belongs to $H^s(\tri)$ and since
		$\pwg_j$ is constant for $j=1,\dots,J$, we have
		\[
		\|\pwg\|_{s,\tri}\le \frac{C}{1-2s}
		\left(\sum_{j=1}^J\|\pwg_j\|^2_{s,P_j}\right)^{1/2}
		= \frac{C}{1-2s}
		\left(\sum_{j=1}^J\|\pwg_j\|^2_{0,P_j}\right)^{1/2}.
		\]
		Hence we have proved that $\grads\v_h\in \H^s(\tri)$ and
		\[
		\|\grads\v_h\|_{s,\tri}\le \frac{C}{1-2s} \|\grads\v_h\|_{0,\tri}.
		\]
		
	\end{proof}
	
	\DB If $\widehat T$ is a reference element corresponding to $T\in\T$, we
	denote by $\widehat\bullet$ quantities defined on $\widehat T$ associated with
	$\bullet$.\BD
	
	We have now all the tools we need to prove the error estimates for the two choices of $\c$.

	\begin{prop}\label{prop:l2_quad_error}
		Let $\Vline_h$ and $\LL_h$ be given by \eqref{eq:element}. Let us assume that $\Xbar$ satisfies Assumption~\ref{assum:xbar}.
		Given a quadrature rule $\{(\quadnode_k^0,\quadweigth_k^0)\}_{k=1}^{K_0}$ exact for quadratic polynomials, i.e. such that
		\begin{equation}\label{eq:quadr}
			\widehat{E}(\widehat{f}) = 0 \quad\forall\widehat{f}\in\Pcal_2(\reftri),
		\end{equation}
		the following estimate holds true 
		\begin{equation}\label{eq:l2_quad_error}
			\abs{E_\B(\mmu_h\cdot\v_h(\Xbar))}
			\le
			C h_\B^{3/2}|\log h_\B^{min}|
			\norm{\mmu_h}_{0,\B}\DB\abs{\v_h}_{1,\Omega}\BD
			\quad\forall\mmu_h\in\LL_h,\forall\v_h\in\Vline_h,
		\end{equation}
		where $E_\B$ is the sum of $E_\tri(\mmu_h\cdot\v_h(\Xbar))$ for all $\tri\in\T_h^\B$ and  $h_\B^{min}=\min_{\tri\in\T_h^\B} h_\tri$. 
	\end{prop}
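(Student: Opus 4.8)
The plan is to reduce everything to the local error functional $E_\tri$ on the elements of $\T_h^\B$ and to exploit the limited regularity of $\v_h(\Xbar)$ furnished by Lemma~\ref{le:duran}. Writing $E_\B(\mmu_h\cdot\v_h(\Xbar))=\sum_{\tri\in\T_h^\B}E_\tri(\mmu_h\cdot\v_h(\Xbar))$, I would first dispose of the elements on which $\Xbar(\tri)$ is contained in a single element of $\T_{h/2}^\Omega$: there $\v_h$ is affine and $\v_h(\Xbar)$ is the composition of two affine maps, hence affine, so that $\mmu_h\cdot\v_h(\Xbar)\in\Pcal_2(\tri)$ and, by the exactness assumption~\eqref{eq:quadr}, $E_\tri(\mmu_h\cdot\v_h(\Xbar))=0$. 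Only the \emph{cut} elements, i.e.\ exactly those covered by Lemma~\ref{le:duran}, contribute to the sum.

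On a cut element I would pass to the reference triangle $\reftri$ through the affine map $F_\tri$. Setting $g=\mmu_h\cdot\v_h(\Xbar)$ and $\widehat g=g\circ F_\tri$, a change of variables gives $E_\tri(g)=\abs{\tri}\,\widehat E(\widehat g)$, where $\widehat E$ is the reference error functional; $\widehat E$ is bounded on $C^0(\reftri)$ and, by~\eqref{eq:quadr}, annihilates $\Pcal_2(\reftri)\supset\Pcal_1(\reftri)$. Since $\widehat g\in H^{1+s}(\reftri)\hookrightarrow C^0(\reftri)$ for $0\le s<1/2$, a Bramble--Hilbert/Deny--Lions argument yields $\abs{\widehat E(\widehat g)}\le C\,\abs{\widehat g}_{1+s,\reftri}$ with $C$ bounded as $s\to 1/2$. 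Scaling the fractional seminorm back to $\tri$ (in two dimensions $\abs{\widehat g}_{1+s,\reftri}\simeq h_\tri^{s}\abs{g}_{1+s,\tri}$) I obtain the local bound $\abs{E_\tri(g)}\le C\,h_\tri^{2+s}\abs{g}_{1+s,\tri}$.

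Next I would estimate $\abs{g}_{1+s,\tri}=\abs{\grads(\mmu_h\cdot\v_h(\Xbar))}_{s,\tri}$. Expanding the gradient by the product rule, the term containing $\grads\v_h(\Xbar)$ — which is only piecewise constant across the subpolygons $P_j$ of Lemma~\ref{le:duran} — is the dangerous one; a multiplication estimate by the Lipschitz function $\mmu_h$ together with Lemma~\ref{le:duran} gives $\abs{\grads\v_h(\Xbar)}_{s,\tri}\le\frac{C}{1-2s}\norm{\grads\v_h(\Xbar)}_{0,\tri}$, while the remaining term (the constant $\grads\mmu_h$ times the continuous $\v_h(\Xbar)$) is controlled without blow-up. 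Absorbing $\norm{\mmu_h}_{\infty,\tri}$ by the inverse estimate $\norm{\mmu_h}_{\infty,\tri}\le C h_\tri^{-1}\norm{\mmu_h}_{0,\tri}$, I reach $\abs{E_\tri(g)}\le\frac{C}{1-2s}\,h_\tri^{1+s}\,\norm{\mmu_h}_{0,\tri}\,\abs{\v_h(\Xbar)}_{1,\tri}$.

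Finally I would sum over the cut elements with the discrete Cauchy--Schwarz inequality, bound $h_\tri\le h_\B$, and convert $\big(\sum_\tri\abs{\v_h(\Xbar)}_{1,\tri}^2\big)^{1/2}$ into $\norm{\v_h}_{1,\Omega}$ using the chain rule, the boundedness of $\grads\Xbar$ (as $\Xbar\in\Winftyd$), and a change of variables relying on the Lipschitz inverse of $\Xbar$. This produces $\abs{E_\B(g)}\le\frac{C}{1-2s}h_\B^{1+s}\norm{\mmu_h}_{0,\B}\norm{\v_h}_{1,\Omega}$, valid for every $s\in(0,1/2)$. The estimate~\eqref{eq:l2_quad_error} then follows by optimizing the free parameter $s$: balancing $h_\B^{s}$ against the blow-up $\tfrac{1}{1-2s}$ of Lemma~\ref{le:duran} — concretely choosing $s=\tfrac12-1/\abs{\log h_\B^{\min}}$ — turns $\tfrac{h_\B^{s}}{1-2s}$ into $C\,h_\B^{1/2}\abs{\log h_\B^{\min}}$ and yields the claimed rate $h_\B^{3/2}\abs{\log h_\B^{\min}}$. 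The main obstacle is the third step: making the product estimate for the fractional seminorm precise enough that the $\tfrac{1}{1-2s}$ singularity is attached only to the rough factor $\grads\v_h(\Xbar)$ while a clean power of $h_\tri$ is preserved, so that the final optimization in $s$ reproduces exactly the logarithmic correction.
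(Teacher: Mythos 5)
Your overall skeleton matches the paper's proof: the same partition of $\T_h^\B$ into elements whose image sits inside a single triangle of $\T_{h/2}^\Omega$ (zero error by exactness) and cut elements, the same reliance on Lemma~\ref{le:duran}, and the same final optimization $s=\tfrac12-1/\abs{\log h_\tri}$ converting $h_\tri^{s}/(1-2s)$ into $h_\tri^{1/2}\abs{\log h_\tri}$. The gap is in your third step. Applying Bramble--Hilbert to the whole product $\widehat g=\widehat\mmu_h\cdot\widehat{\v_h(\Xbar)}$ with the seminorm $\abs{\cdot}_{1+s,\reftri}$ exploits only that the quadrature rule annihilates $\Pcal_1(\reftri)$ (the kernel of that seminorm), not the assumed $\Pcal_2$-exactness. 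The price appears precisely in the cross term of your product rule: for the part $\grads\mmu_h$ times $\v_h(\Xbar)$ the available bounds are $\norm{\grads\mmu_h}_{\infty,\tri}\abs{\v_h(\Xbar)}_{s,\tri}$ with $\abs{\v_h(\Xbar)}_{s,\tri}\le Ch_\tri^{1-s}\abs{\v_h(\Xbar)}_{1,\tri}$, and, since the target estimate carries $\norm{\mmu_h}_{0,\B}$ rather than its gradient, you must pay the inverse inequality $\norm{\grads\mmu_h}_{\infty,\tri}\le Ch_\tri^{-2}\norm{\mmu_h}_{0,\tri}$. Multiplied by your factor $h_\tri^{2+s}$, this term contributes $Ch_\tri\norm{\mmu_h}_{0,\tri}\abs{\v_h(\Xbar)}_{1,\tri}$ --- only first order, with no help from $s$ --- so your argument proves at best $\abs{E_\B(\mmu_h\cdot\v_h(\Xbar))}\le Ch_\B\norm{\mmu_h}_{0,\B}\norm{\v_h}_{1,\Omega}$, not the claimed $h_\B^{3/2}\abs{\log h_\B^{min}}$ rate. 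Your remark that this term is ``controlled without blow-up'' is true but beside the point: the loss is a half power of $h$, not the constant in $s$, and it is intrinsic to measuring the whole product in $H^{1+s}$, whose seminorm does not vanish on the quadratic part of $g$.

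The paper avoids this by never differentiating $\mmu_h$: on each cut element it inserts the linear interpolant $\v_I$ of $\v_h(\Xbar)$ and splits $E_\tri(\mmu_h\cdot\v_h(\Xbar))$ into (i) $\int_\tri\mmu_h\cdot(\v_h(\Xbar)-\v_I)\,\ds$, (ii) $E_\tri(\mmu_h\cdot\v_I)$, which vanishes because $\mmu_h\cdot\v_I\in\Pcal_2(\tri)$ --- this is where the quadratic exactness is actually spent --- and (iii) the quadrature sum of $\mmu_h\cdot(\v_h(\Xbar)-\v_I)$. Both surviving terms carry the small factor $\v_h(\Xbar)-\v_I$, estimated in $L^2$ by interpolation and in $L^\infty$ through the reference element and the embedding $H^{1+s}(\reftri)\subset L^\infty(\reftri)$, and only then is Lemma~\ref{le:duran} invoked; the result is $\frac{C}{1-2s}h_\tri^{1+s}\norm{\mmu_h}_{0,\tri}\norm{\v_h(\Xbar)}_{1,\tri}$ with $\mmu_h$ entering only through $\norm{\mmu_h}_{0,\tri}$. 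If you want to retain a Bramble--Hilbert formulation, it must be bilinear: freeze $\widehat\mmu_h$ and regard $\widehat\w\mapsto\widehat E(\widehat\mmu_h\cdot\widehat\w)$ as a linear functional on $\H^{1+s}(\reftri)$ vanishing on $[\Pcal_1(\reftri)]^2$ with norm bounded by $C\norm{\widehat\mmu_h}_{\infty,\reftri}$; that reproduces the paper's estimate and is essentially the interpolant splitting in different clothing.
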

	
	\begin{proof}
		We start proving a local estimate in a single element $\tri$ of the solid mesh
		$\T_h^\B$: we notice that, thanks to~\eqref{eq:quadr}, if
		$\DB\Xbar(\tri)\BD$ is included in
		an element of the velocity mesh $\T_{h/2}^\Omega$, then the error is zero. For this reason, we consider the
		following two subsets of $\T_h^\B$
		\begin{equation}\label{eq:solid_mesh_partition}
			\begin{aligned}
				&\T_{h,1}^\B = \{\tri\in\T_h^\B:\DB\Xbar(\tri)\BD\text{ is included in an element of }
				\T_{h/2}^\Omega\}
				\\
				&\T_{h,2}^\B = \T_h^\B \setminus \T_{h,1}^\B.
			\end{aligned}
		\end{equation}
		Hence, $\v_h(\Xbar)$ is a piecewise linear
		polynomial on $\tri\in\T_{h,2}^\B$. 
		Our aim is to find a bound for
		\begin{equation}
			E_\tri(\mmu_h\cdot\v_h(\Xbar)) = \int_\tri \mmu_h\cdot\v_h(\Xbar)\,\ds 
			- \abs{\tri}\sum_{k=1}^{K_0}\quadweigth_k^0\mmu_h(\quadnode_k^0)\cdot\v_h(\Xbar(\quadnode_k^0))
		\end{equation}
		where $\mmu_h\in[\Pcal_1(\tri)]^2$ and $\v_h(\Xbar)$ is a continuous piecewise linear
		polynomial on $\tri$.
		
		Let us now introduce the linear interpolant $\v_I\in[\Pcal_1(\tri)]^2$ of
		$\v_h(\Xbar)$, so that with simple manipulations we can write
		\begin{equation}\label{eq:manipulation_l2}
			\begin{aligned}
				E_\tri(\mmu_h\cdot\v_h(\Xbar)) = 
				&\int_\tri \mmu_h\cdot\big(\v_h(\Xbar)-\v_I\big)\,\ds
				+ \int_\tri \mmu_h\cdot\v_I\,\ds 
				- \abs{\tri}\sum_{k=1}^{K_0}\quadweigth_k^0\mmu_h(\quadnode_k^0)
				\cdot\v_I(\quadnode_k^0)\\
				&\DB-\BD  \abs{\tri} \sum_{k=1}^{K_0}\quadweigth_k^0 \mmu_h(\quadnode_k^0)
				\cdot \big(\v_h(\Xbar(\quadnode_k^0))-\v_I(\quadnode_k^0)\big)
			\end{aligned}
		\end{equation}
		and we can study each term separately.
		
		The first term can be handled applying the Cauchy--Schwarz inequality,
		a classical interpolation result, and Lemma~\ref{le:duran}. Therefore, for $0\le s<1/2$,  we have
		\begin{equation}
			\label{eq:1st_term_l2}
			\aligned
			\abs{\int_\tri \mmu_h\cdot\big(\v_h(\Xbar)-\v_I\big)\,\ds}
			&\le \norm{\mmu_h}_{0,\tri}\norm{\v_h(\Xbar)-\v_I}_{0,\tri}\\
			&\le h_\tri^{1+s} \norm{\mmu_h}_{0,\tri} |\v_h(\Xbar)|_{1+s,\tri}\\
			&\le \frac{h_\tri^{1+s}}{1-2s}\norm{\mmu_h}_{0,\tri} |\v_h(\Xbar)|_{1,\tri}.
			\endaligned
		\end{equation}
		
		The second term is the quadrature error of the product $\mmu_h\cdot\v_I$ which
		can be estimated as
		\begin{equation}
			E_\tri(\mmu_h\cdot\v_I) = \int_\tri \mmu_h\cdot\v_I\,\ds
			- \abs{\tri}\sum_{k=1}^{K_0}\quadweigth_k^0\mmu_h(\quadnode_k^0)
			\cdot\v_I(\quadnode_k^0) = 0
		\end{equation}
		from the construction of $\v_I$ and the choice of quadrature rule.
		
		Now, in order to estimate the last term, we use Cauchy--Schwarz
		inequality and the exactness of the quadrature rule
		\begin{equation}\label{eq:3rd_part1_l2}
			\begin{aligned}
				&\abs{\abs{\tri} \sum_{k=1}^{K_0}\quadweigth_k^0 \mmu_h(\quadnode_k^0)\cdot
					\big(\v_h(\Xbar(\quadnode_k^0))-\v_I(\quadnode_k^0)\big)}\\
				&\qquad \le \bigg(\abs{\tri}
				\sum_{k=1}^{K_0}\quadweigth_k^0\,\mmu_h(\quadnode_k^0)^2 \bigg)^{1/2} \,
				\bigg(\abs{\tri}
				\sum_{k=1}^{K_0}\quadweigth_k^0\,
				\big(\v_h(\Xbar(\quadnode_k^0))-\v_I(\quadnode_k^0)\big)^2\bigg)^{1/2}\\
				&\qquad =\bigg(\int_\tri \abs{\mmu_h}^2\bigg)^{1/2}\, \bigg(\abs{\tri}
				\sum_{k=1}^{K_0}\quadweigth_k^0\,
				\big(\v_h(\Xbar(\quadnode_k^0))-\v_I(\quadnode_k^0)\big)^2\bigg)^{1/2}\\
				&\qquad \le K_0\abs{\tri}^{1/2}\norm{\mmu_h}_{0,\tri}
				\norm{\v_h(\Xbar)-\v_I}_{\infty,\tri}
			\end{aligned}
		\end{equation}
		The estimate for $\norm{\v_h(\Xbar)-\v_I}_{\infty,\tri}$ is not trivial.
		Since $\v_h(\Xbar)$ is piecewise linear in $\tri$, we have from
		Lemma~\ref{le:duran}, that
		$\v_h(\Xbar)\in \H^{1+s}(\tri)$ with $0\le s<1/2$.
		
		As mentioned before, using standard arguments for finite elements (see, \cite{ciarlet2002finite}),
		$\tri$ is affine equivalent to a reference element $\reftri$ and thanks to the
		inclusion $H^{1+s}(\reftri)\subset L^\infty(\reftri)$, we can
		write 
		\begin{equation}\label{eq:3rd_part3_l2}
			\begin{aligned}
				\norm{\v_h(\Xbar)-\v_I}_{\infty,\tri}
				&\le \norm{\widehat{\v_h(\Xbar)}-\widehat{\v_I}}_{\infty,\reftri}\\
				&\le \norm{\mathcal{I}-\widehat{\Pi}}_{\mathscr{L}(H^{1+s}(\reftri),L^\infty(\reftri))}\, \inf_{\widehat{\mathbf{q}}\in[\Pcal_1(\reftri)]^2} \norm{\widehat{\v_h(\Xbar)}+\widehat{\mathbf{q}}}_{1+s,\reftri}\\
				&\le C \abs{\widehat{\v_h(\Xbar)}}_{1+s,\reftri}
			\end{aligned}
		\end{equation}
		where $\widehat{\v_h(\Xbar)}$ is defined via the affine mapping $F_\tri$ as $\widehat{\v_h(\Xbar(\widehat{\s}))} = \v_h(\Xbar(F_\tri(\widehat{\s})))$, $\mathcal{I}$ is the identity operator and $\widehat{\Pi}$ the interpolation operator.
		
		Now, following \cite{dupont1980polynomial} and using the definition of fractional Sobolev seminorm, we have
		\small
		\begin{equation*}
			\begin{aligned}
				&\abs{\widehat{\v_h(\Xbar)}}_{1+s,\reftri}^2
				= \abs{\grads\widehat{\v_h(\Xbar)}}_{s,\reftri}^2
				= \int_{\reftri} \int_{\reftri} \frac{\abs{\grads\widehat{\v_h(\Xbar(\widehat{\s}_1))}-\grads\widehat{\v_h(\Xbar(\widehat{\s}_2))}}^2 }{\abs{\widehat{\s}_1-\widehat{\s}_2}^{2(s+1)}} \,d\widehat{\s}_1d\widehat{\s}_2\\
				&\quad= \abs{\det B_\tri}^{-2} \int_{\tri} \int_{\tri} \frac{\abs{B_\tri \big(\grads\v_h(\Xbar(\s_1))-\grads\v_h(\Xbar(\s_2))\big)}^2}{\abs{\s_1-\s_2}^{2(s+1)}} \bigg(\frac{\abs{\s_1-\s_2}}{\abs{B_\tri^{-1}(\s_1-\s_2)}}\bigg)^{2(s+1)}\,\ds_1\ds_2\\
				&\quad\le \frac{\norm{B_\tri}^{4+2s}}{\abs{\det B_\tri}^2} \abs{\grads\v_h(\Xbar)}_{s,\tri}^2 
			\end{aligned}
		\end{equation*}
		\normalsize
		so that
		\begin{equation}\label{eq:3rd_part4_l2}
			\abs{\widehat{\v_h(\Xbar)}}_{1+s,\reftri}^2 \le C\,h_\tri^{2s} \norm{\grads\v_h(\Xbar)}_{s,\tri}^2.
		\end{equation}
		
		By applying again Lemma~\ref{le:duran}, putting together
		\eqref{eq:3rd_part1_l2}, \eqref{eq:3rd_part3_l2}, and \eqref{eq:3rd_part4_l2} 
		and taking into account that $\abs{\tri}\le h_\tri^2$, we get
		\begin{equation}\label{eq:3rd_term_l2}
			\begin{aligned}
				\abs{\abs{\tri} \sum_{k=1}^{K_0}\quadweigth_k \mmu_h(\quadnode_k^0)\cdot \big(\v_h(\Xbar(\quadnode_k^0))-\v_I(\quadnode_k^0)\big)}
				&\le \frac{C}{1-2s} \abs{\tri}^{1/2} h_\tri^s
				\norm{\mmu_h}_{0,\tri} \DB\norm{\grads\v_h(\Xbar)}_{0,\tri}\BD\\
				&\le C\frac{h_\tri^{1+s}}{1-2s}\norm{\mmu_h}_{0,\tri}
				\norm{\grads\v_h(\Xbar)}_{0,\tri}.
			\end{aligned}
		\end{equation}
		
		Finally, for $0\le s<1/2$, the local estimate reads: 
		\begin{equation}\label{eq:local_l2_estimate}
			\abs{E_\tri(\mmu_h\cdot\v_h(\Xbar))}
			\le C \frac{h_\tri^{1+s}}{1-2s}
			\norm{\mmu_h}_{0,\tri}\DB\norm{\grads\v_h(\Xbar)}_{0,\tri}\BD.
		\end{equation}
		Taking $s=\frac12+\frac1{\log h_\tri}$ in the above inequality, we obtain with
		simple computations the following local estimate:
		\begin{equation}
			\abs{E_\tri(\mmu_h\cdot\v_h(\Xbar))}\le Ch_\tri^{3/2}|\log h_\tri|
			\norm{\mmu_h}_{0,\tri}\DB\norm{\grads\v_h(\Xbar)}_{0,\tri}\BD.
			\label{eq:disallineata}
		\end{equation}
		Summing on all the triangles in $\T_{h,2}^\B$, we get the global
		estimate~\eqref{eq:l2_quad_error}.
	\end{proof}
	
	We now study the following proposition for the estimate of the error for the $\LdBd$ scalar product of gradients.

	\begin{prop}\label{prop:l2grad_estimate}
		Let $\Vline_h$ and $\LL_h$ given by \eqref{eq:element}. Let us assume that $\Xbar$ satisfies Assumption~\ref{assum:xbar}.
		Given a quadrature rule $\{(\quadnode_k^1,\quadweigth_k^1)\}_{k=1}^{K_1}$ exact for constants, i.e. such that
		\begin{equation}
			\widehat{E}(\widehat{f}) = 0
			\quad\forall\widehat{f}\in\Pcal_0(\reftri),
		\end{equation}
		and a  quasi-uniform mesh  $\T_h^\Omega$, the following estimate holds
		\begin{equation}\label{eq:h1_quad_error}
			\abs{E_\B(\grads\mmu_h:\grads\v_h(\Xbar))}
			\le
			C\bigg( h_\B^{1/2}|\log h_\B^{min}| + \frac{h_\B}{h_\Omega} \bigg)
			\norm{\grads \mmu_h}_{0,\B}\norm{\Grad\v_h}_{0,\Omega}
		\end{equation}
		for all $\mmu_h\in\LL_h$ and $\v_h\in\Vline_h$. Here $E_\B$ is the sum of $E_\tri(\grads\mmu_h:\grads\v_h(\Xbar))$ for all $\tri\in\T_h^\B$ and  $h_\B^{min}=\min_{\tri\in\T_h^\B} h_\tri$. 
	\end{prop}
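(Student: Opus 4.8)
The plan is to transplant the architecture of the proof of Proposition~\ref{prop:l2_quad_error} to the product of gradients, keeping track of the fact that the rule is now only exact for constants. First I would localize. Since $\widehat E$ annihilates $\Pcal_0(\reftri)$ and, on every $\tri\in\T_{h,1}^\B$ of~\eqref{eq:solid_mesh_partition}, the composed function $\v_h(\Xbar)$ is globally affine (so that $\grads\mmu_h:\grads\v_h(\Xbar)$ is constant on $\tri$), the functional $E_\tri$ vanishes there. Hence $E_\B$ collapses to a sum over the \emph{straddling} elements $\tri\in\T_{h,2}^\B$, on which $\v_h(\Xbar)$ is genuinely piecewise linear and Lemma~\ref{le:duran} is available.

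On such a $\tri$ I would introduce the linear interpolant $\v_I\in[\Pcal_1(\tri)]^2$ of $\v_h(\Xbar)$ and split, in the spirit of~\eqref{eq:manipulation_l2},
\begin{equation*}
\begin{aligned}
E_\tri(\grads\mmu_h:\grads\v_h(\Xbar)) &= \int_\tri \grads\mmu_h:(\grads\v_h(\Xbar)-\grads\v_I)\,\ds + E_\tri(\grads\mmu_h:\grads\v_I)\\
&\quad + \abs{\tri}\sum_{k=1}^{K_1}\quadweigth_k^1\,\grads\mmu_h:(\grads\v_I-\grads\v_h(\Xbar))(\quadnode_k^1).
\end{aligned}
\end{equation*}
Because both $\mmu_h$ and $\v_I$ are linear, $\grads\mmu_h:\grads\v_I$ is constant and the middle term vanishes by exactness for $\Pcal_0$. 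The first term is handled by Cauchy--Schwarz, the fractional interpolation estimate $\abs{\v_h(\Xbar)-\v_I}_{1,\tri}\le Ch_\tri^{s}\abs{\v_h(\Xbar)}_{1+s,\tri}$, and Lemma~\ref{le:duran}, yielding the local bound $\tfrac{Ch_\tri^{s}}{1-2s}\norm{\grads\mmu_h}_{0,\tri}\norm{\grads\v_h(\Xbar)}_{0,\tri}$; taking $s=\tfrac12+\tfrac1{\log h_\tri}$ exactly as before turns this into $Ch_\tri^{1/2}\abs{\log h_\tri}\,\norm{\grads\mmu_h}_{0,\tri}\norm{\grads\v_h(\Xbar)}_{0,\tri}$, and summing over $\T_{h,2}^\B$ (together with the change of variables $\s\mapsto\Xbar(\s)$, whose Jacobian is controlled since $\Xbar\in\Winftyd$ has Lipschitz inverse) produces precisely the $h_\B^{1/2}\abs{\log h_\B^{\min}}\norm{\grads\mmu_h}_{0,\B}\norm{\Grad\v_h}_{0,\Omega}$ contribution to~\eqref{eq:h1_quad_error}.

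The main obstacle is the third term, which is where the factor $h_\B/h_\Omega$ and the quasi-uniformity hypothesis on $\T_h^\Omega$ must be spent. Factoring out the constant $\grads\mmu_h$ and using that the weights are nonnegative and sum to one, I would bound this term by $\abs{\tri}^{1/2}\norm{\grads\mmu_h}_{0,\tri}\,\norm{\grads\v_I-\grads\v_h(\Xbar)}_{\infty,\tri}$. The pointwise difference is controlled through the $W^{1,\infty}$-stability of linear interpolation, $\norm{\grads\v_I-\grads\v_h(\Xbar)}_{\infty,\tri}\le C\norm{\grads\v_h(\Xbar)}_{\infty,\tri}$, which is legitimate because $\v_h(\Xbar)$ is piecewise linear, hence Lipschitz, on $\tri$ under Assumption~\ref{assum:xbar}. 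I would then pass to the fluid mesh by the chain rule, $\norm{\grads\v_h(\Xbar)}_{\infty,\tri}\le C\norm{\Grad\v_h}_{\infty,\Xbar(\tri)}$, and invoke the inverse inequality on the quasi-uniform mesh $\T_{h/2}^\Omega$ to trade the $L^\infty$ norm of the piecewise-constant $\Grad\v_h$ for $h_\Omega^{-1}$ times its $L^2$ norm over the fluid elements $\trif$ meeting $\Xbar(\tri)$. Combined with $\abs{\tri}^{1/2}\le Ch_\B$, this delivers the $h_\B/h_\Omega$ scaling at the element level.

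The genuinely delicate step is the ensuing global summation over $\T_{h,2}^\B$: after a Cauchy--Schwarz in the element index one is left with a factor $\big(\sum_{\tri\in\T_{h,2}^\B}\max_{\trif}\norm{\Grad\v_h}_{0,\trif}^2\big)^{1/2}$ that must be dominated by $\norm{\Grad\v_h}_{0,\Omega}$. Here I would exploit that $\Xbar$ is Lipschitz with Lipschitz inverse, so that $\operatorname{diam}\Xbar(\tri)\sim h_\tri$, together with a finite-overlap argument: each fluid element is reached by only a controlled number of \emph{straddling} solid elements, precisely because the interior solid elements have been discarded into $\T_{h,1}^\B$. Quantifying this overlap by means of the quasi-uniformity of both grids is the technical heart of the argument, and is exactly what fixes the precise power of $h_\B/h_\Omega$ appearing in~\eqref{eq:h1_quad_error}; I expect this bookkeeping, rather than any single inequality, to be where the real care is required.
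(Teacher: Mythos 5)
Your proposal reproduces the paper's proof essentially step by step: the same localization to the straddling elements $\T_{h,2}^\B$ of \eqref{eq:solid_mesh_partition} (exactness for constants kills $E_\tri$ on $\T_{h,1}^\B$), the same three-term splitting \eqref{eq:manipulation_h1} around the linear interpolant $\v_I$, the same use of Lemma~\ref{le:duran} with the choice $s=\tfrac12+\tfrac1{\log h_\tri}$ for the first term, the vanishing of the middle term by exactness, and, for the third term, the same chain of $L^\infty$ bounds (stability of the interpolant as in \eqref{eq:step_polygons}, passage to the fluid elements through $\Xbar$, inverse inequality plus quasi-uniformity of $\T_{h/2}^\Omega$ as in \eqref{eq:step_macroelement}), producing the local factor $\abs{\tri}^{1/2}h_\Omega^{-1}\le h_\B/h_\Omega$ times $\norm{\Grad\v_h}_{0,\omega_\tri}$ on the macroelement $\omega_\tri$.

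The only divergence is at the final summation, and there your instinct is sharper than the paper's text. The paper concludes in one sentence, ``summing over all solid elements and exploiting $\Xbar(\B)\subset\Omega$.'' For the first term this is indeed harmless, since the images $\Xbar(\tri)$ are essentially disjoint and the bi-Lipschitz change of variables you describe gives $\sum_\tri\norm{\grads\v_h(\Xbar)}_{0,\tri}^2\le C\norm{\Grad\v_h}_{0,\Omega}^2$. For the third term, however, bounding $\sum_{\tri\in\T_{h,2}^\B}\norm{\Grad\v_h}^2_{0,\omega_\tri}$ by $C\norm{\Grad\v_h}^2_{0,\Omega}$ requires exactly the finite-overlap property you flag as the ``technical heart'' and leave unproven --- and the paper does not prove it either; it is silently assumed. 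Moreover, your expectation that the overlap is an absolute constant is not correct in general: a straddling image meeting a fixed fluid element $\trif$ must cross $\partial\trif$, and a disjointness/area count shows the number of such images is $O(1+h_\Omega/h_\B)$. This is $O(1)$ when $h_\B\ge c\,h_\Omega$ (the regime of Test~1, with $h_\B/h_\Omega$ constant), but when $h_\B\ll h_\Omega$ the count degenerates and the Cauchy--Schwarz summation yields a factor $(h_\B/h_\Omega)^{1/2}$ rather than $h_\B/h_\Omega$ in front of $\norm{\grads\mmu_h}_{0,\B}\norm{\Grad\v_h}_{0,\Omega}$. So your proposal matches the paper's argument in every step the paper actually carries out; the step you declare open is genuinely open in the paper as well, and closing it as stated requires either the restriction $h_\B\gtrsim h_\Omega$ or a finer argument than pure bookkeeping.
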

	
	\begin{proof}
		Using the same technique as in the proof of Proposition~\ref{prop:l2_quad_error},
		we partition the solid mesh $\T_h^\B$ as in
		\eqref{eq:solid_mesh_partition} and we work locally in an element
		$\tri\in\T_{h,2}^\B$: this means that we have to bound the difference
		\begin{equation}
			E_\tri(\grads\mmu_h:\grads\v_h(\Xbar)) = \int_\tri
			\grads\mmu_h:\grads\v_h(\Xbar)\,\ds -
			\abs{\tri}\sum_{k=1}^{K_1}\quadweigth_k^1\grads\mmu_h(\quadnode_k^1):
			\grads\v_h(\Xbar(\quadnode_k^1)).
		\end{equation}
		We recall that $\grads\mmu_h\in[\Pcal_0(\tri)]^2$, while $\grads\v_h(\Xbar)$ is
		a discontinuous piecewise constant function in~$\tri$.
		Working again with the interpolant $\v_I\in[\Pcal_1(\tri))]^2$ of $\v_h(\Xbar)$,
		we can write
		\begin{equation}\label{eq:manipulation_h1}
			\begin{aligned}
				E_\tri(\grads\mmu_h:\grads\v_h(\Xbar)) = &\int_\tri \grads\mmu_h:\big(\grads\v_h(\Xbar)-\grads\v_I\big)\,\ds\\
				&+ \int_\tri \grads\mmu_h:\grads\v_I\,\ds - \abs{\tri}\sum_{k=1}^{K_1}\quadweigth_k^1\grads\mmu_h(\quadnode_k^1):\grads\v_I(\quadnode_k^1)\\
				&\DB-\BD  \abs{\tri} \sum_{k=1}^{K_1}\quadweigth_k^1 \grads\mmu_h(\quadnode_k^1): \big(\grads\v_h(\Xbar(\quadnode_k^1))-\grads\v_I(\quadnode_k^1)\big)
			\end{aligned}
		\end{equation}
		so that we can deal separately with each term.
		
		For the first term, we use the Cauchy--Schwarz inequality
		\begin{equation}
			\abs{\int_\tri
				\grads\mmu_h:\big(\grads\v_h(\Xbar)-\grads\v_I\big)\,\ds}\le
			\norm{\grads\mmu_h}_{0,\tri} \norm{\grads\v_h(\Xbar)-\grads\v_I}_{0,\tri},
		\end{equation}
		then using Lemma~\ref{le:duran} we get
		\[
		\norm{\grads\v_h(\Xbar)-\grads\v_I}_{0,\tri}
		\le h_\tri^s\norm{\grads\v_h(\Xbar)}_{s,\tri}
		\le \frac{h_\tri^s}{1-2s}\norm{\grads\v_h(\Xbar)}_{0,\tri}
		\]
		so that, taking $s=\frac12+\frac1{\log h_\tri}$, we find
		\begin{equation}
			\begin{aligned}
				&\abs{\int_\tri \grads\mmu_h:\big(\grads\v_h(\Xbar)-\grads\v_I\big)\,\ds}\\
				&\hspace{3cm}\le Ch_\tri^{1/2}\abs{\log h_\tri} \norm{\grads\mmu_h}_{0,\tri}\norm{\grads\v_h(\Xbar)}_{0,\tri}.
			\end{aligned}
		\end{equation}
		
		The second term is zero by construction of $\v_I$ and the choice of quadrature rule
		\begin{equation}
			E_\tri(\grads\mmu_h:\grads\v_I) = \int_\tri \grads\mmu_h:\grads\v_I\,\ds - \abs{\tri}\sum_{k=1}^{K_1}\quadweigth_k^1\grads\mmu_h(\quadnode_k^1):\grads\v_I(\quadnode_k^1)=0.
		\end{equation}
		
		For the last term, we proceed as in \eqref{eq:3rd_part1_l2} so that we have
		\begin{equation}\label{eq:3rd_part1_h1}
			\begin{aligned}
				&\abs{\abs{\tri} \sum_{k=1}^{K_1}\quadweigth_k^1 \grads\mmu_h(\quadnode_k^1): \big(\grads\v_h(\Xbar(\quadnode_k^1))-\grads\v_I(\quadnode_k^1)\big)}\\
				&\hspace{3.5cm}\le K_1 \abs{\tri}^{1/2} \norm{\grads\mmu_h}_{0,\tri}\norm{\grads\v_h(\Xbar)-\grads\v_I}_{\infty,\tri}
			\end{aligned}
		\end{equation}
		
		In order to estimate $\norm{\grads\v_h(\Xbar)-\grads\v_I}_{\infty,\tri}$,
		we take into account the discontinuity of $\grads\v_h(\Xbar)$ in $\tri$:
		hence we consider again the tessellation of $\tri$ into disjoint polygons
		$P_j$, $j=1,\dots,J$, with $\tri=\bigcup_{j=1}^J P_j$
		and $\grads\v_h(\Xbar)$ constant in each $P_j$.
		Using that $\norm{\grads\v_I}_{\infty,\tri} \le\norm{\grads\v_h(\Xbar)}_{\infty,\tri} $ ,
		we can write
		\begin{equation}\label{eq:step_polygons}
			\begin{aligned}
				\norm{\grads\v_h(\Xbar)-\grads\v_I}_{\infty,\tri} 
				&= \max_{j=1,\dots,J} \norm{\grads\v_h(\Xbar)-\grads\v_I}_{\infty,P_j}\\
				&\le 2 \max_{j=1,\dots,J} \norm{\grads\v_h(\Xbar)}_{\infty,P_j}.
			\end{aligned}
		\end{equation}
		We notice that each polygon $P_j$ is the inverse image
		through $\Xbar$ of $\Xbar(\tri)\cap\trifj$ with $\trifj\in\T_{h/2}^\Omega$.
		Moreover, we denote by $\omega_\tri$ the macroelement
		consisting of all fluid triangles intersecting $\Xbar(\tri)$,
		i.e. ${\omega_\tri = \bigcup_{j=1}^J \trifj}$.
		Therefore, by applying the inverse inequality
		$\|\w_h\|_{\infty,\tau}\le C_I/h_\tau\|\w_h\|_{0,\tau}$ for $\w_h\in\Vline_h$
		and $\tau\in\T_{h/2}^\Omega$, we get
		\begin{equation}\label{eq:step_macroelement}
			\begin{aligned}
				\max_{j=1,\dots,J} \norm{\grads\v_h(\Xbar)}_{\infty,P_j}
				&\le \DB C_{\Xbar}\BD\max_{j=1,\dots,J} \norm{\Grad\v_h}_{\infty,\trifj}\\
				&\le \DB C_{\Xbar}\BD\max_{j=1,\dots,J}\,\frac{\DB C_{\trifj}\BD}{h_{\trifj}}
				\norm{\Grad\v_h}_{0,\trifj}
				\le \frac{\DB C\BD}{h_{\Omega}} \norm{\Grad\v_h}_{0,\omega_\tri},
			\end{aligned}
		\end{equation}
		\DB where $C_{\Xbar}$ depends on $\|\Xbar\|_{W^{1,\infty}(\B)}$ and\BD, in the last
		inequality, we exploited that $\T_{h/2}^\Omega$ is quasi-uniform. Putting
		together \eqref{eq:3rd_part1_h1} with \eqref{eq:step_polygons} and
		\eqref{eq:step_macroelement}, we end up with
		\begin{equation}
			\begin{aligned}
				&\abs{\abs{\tri} \sum_{k=1}^{K_1}\quadweigth_k^1 \grads\mmu_h(\quadnode_k^1): \big(\grads\v_h(\Xbar(\quadnode_k^1))-\grads\v_I(\quadnode_k^1)\big)}\\
				& \hspace{1.5cm} \le C \frac{\abs{\tri}^{1/2}}{h_{\Omega}} \norm{\grads\mmu_h}_{0,\tri} \norm{\Grad\v_h}_{0,\omega_\tri}\
				\le C\frac{h_\B}{h_{\Omega}} \norm{\grads\mmu_h}_{0,\tri} \norm{\Grad\v_h}_{0,\omega_\tri}.
			\end{aligned}
		\end{equation}
		Finally, summing on over all solid elements and exploiting $\Xbar(\B)\subset\Omega$, the global estimate \eqref{eq:h1_quad_error} is proven.
		
	\end{proof}
	
	
	The following results for the coupling term are direct consequence of the two propositions above.
	
	\begin{prop}\label{prop:l2_final}
		Under the same hypotheses of Proposition~\ref{prop:l2_quad_error}, if the continuous and discrete coupling bilinear forms are given by~\eqref{eq:ch_l2} and~\eqref{eq:noint_l2}, respectively, then the following quadrature error estimate holds
		\begin{equation}\label{eq:l2_est_fin}
			\abs{\c_0(\mmu_h,\v_h(\Xbar))-\c_{0,h}(\mmu_h,\v_h(\Xbar))} \le
			C h_\B^{3/2}|\log h_\B^{min}| \norm{\mmu_h}_{0,\B}\norm{\v_h}_{1,\Omega}
		\end{equation}
		for all $\mmu_h\in\LL_h$, $\v_h\in\Vline_h$.
	\end{prop}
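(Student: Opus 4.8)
The plan is to observe that the left-hand side of \eqref{eq:l2_est_fin} is nothing but the global quadrature error functional $E_\B$ already controlled in Proposition~\ref{prop:l2_quad_error}, so that the statement is an immediate corollary. First I would subtract the definition \eqref{eq:noint_l2} of the inexact form from the definition \eqref{eq:ch_l2} of the exact one:
\begin{equation*}
\c_0(\mmu_h,\v_h(\Xbar))-\c_{0,h}(\mmu_h,\v_h(\Xbar))
= \sum_{\tris\in\T^\B_h}\left(\int_{\tris}\mmu_h\cdot\v_h(\Xbar)\,\ds
 - \abs{\tris}\sum_{k=1}^{K_0}\quadweigth_k^0\,\mmu_h(\quadnode_k^0)\cdot\v_h(\Xbar(\quadnode_k^0))\right).
\end{equation*}
Each summand is, by definition of the quadrature error functional, exactly $E_\tris(\mmu_h\cdot\v_h(\Xbar))$, and therefore the whole difference coincides with $E_\B(\mmu_h\cdot\v_h(\Xbar))=\sum_{\tris\in\T^\B_h}E_\tris(\mmu_h\cdot\v_h(\Xbar))$.

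Second, since the hypotheses here are declared to be the same as in Proposition~\ref{prop:l2_quad_error} --- the spaces $\Vline_h,\LL_h$ are those in \eqref{eq:element}, $\Xbar$ satisfies Assumption~\ref{assum:xbar}, and the quadrature rule $\{(\quadnode_k^0,\quadweigth_k^0)\}$ is exact for quadratic polynomials --- I would simply invoke the global bound \eqref{eq:l2_quad_error}, obtaining
\begin{equation*}
\abs{\c_0(\mmu_h,\v_h(\Xbar))-\c_{0,h}(\mmu_h,\v_h(\Xbar))}
= \abs{E_\B(\mmu_h\cdot\v_h(\Xbar))}
\le C h_\B^{3/2}\abs{\log h_\B^{min}}\norm{\mmu_h}_{0,\B}\norm{\v_h}_{1,\Omega},
\end{equation*}
which is precisely \eqref{eq:l2_est_fin}.

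I do not expect any genuine obstacle in this proposition: all the analytical effort has already been spent in Proposition~\ref{prop:l2_quad_error}, namely the element-wise splitting of $E_\tris$ into an interpolation term, a vanishing exact-quadrature term, and a remainder, the use of Lemma~\ref{le:duran} together with the fractional-seminorm scaling under the affine map, and the optimization $s=\tfrac12+1/\log h_\tri$ that produces the logarithmic factor. The only point requiring care is to verify that the two coupling forms differ exactly by the quadrature functional $E_\B$ and that the stated hypotheses coincide with those of Proposition~\ref{prop:l2_quad_error}; both are immediate from the definitions.
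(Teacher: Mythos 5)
Your proposal is correct and matches the paper's own proof, which likewise observes that the difference of the two forms is exactly $E_\B(\mmu_h\cdot\v_h(\Xbar))$ and then invokes Proposition~\ref{prop:l2_quad_error}. Your extra step of writing out the element-wise sum and checking the hypotheses coincide is just a more explicit version of the same one-line argument.
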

	
	\begin{proof}
		The left hand side of~\eqref{eq:l2_est_fin} is exactly $E_\B(\mmu_h\cdot\v_h(\Xbar))$, therefore Proposition~\ref{prop:l2_quad_error} gives the result.
	\end{proof}
	Thanks to the above proposition, the final error estimate in the case
	when $\c=\c_0$ reads: 
	\begin{corollary}
		Within the setting of Theorem~\ref{theo:strang} and Proposition~\ref{prop:l2_final}, the following estimate holds true
		\begin{equation*}
			\begin{aligned}
				&\norm{\u-\u_h^\star}_{1,\Omega} + \norm{p-p_h^\star}_{0,\Omega} + \norm{\X-\X_h^\star}_{1,\B} + \norm{\llambda-\llambda_h^\star}_{\LL} \\
				&\le C\,\bigg(\inf_{\v_h\in\Vline_h}\norm{\u-\v_h}_{1,\Omega} + \inf_{q_h\in Q_h}\norm{p-q_h}_{0,\Omega} + \inf_{\Y_h\in\Sline_h}\norm{\X-\Y_h}_{1,\B} + \inf_{\mmu_h\in\LL_h}\norm{\llambda-\mmu_h}_{\LL} \\
				&\quad+h_\B^{1/2}|\log h_\B^{min}|\|\u_h\|_{1,\Omega}+\frac{ h_\B^{3/2}}{h_\B^{min}}|\log h_\B^{min}|\|\llambda_h\|_{\LL} + h_\Omega^s\,|\f|_{s,\Omega} + h_\B^s\,|\g|_{s,\B} +\, h_\B^{r}\|\d\|_{r,\B}\bigg).
			\end{aligned}
		\end{equation*}
		\label{co:L2}
	\end{corollary}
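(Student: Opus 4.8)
The plan is to specialize the abstract Strang-type bound of Theorem~\ref{theo:strang} to the $L^2$ coupling $\c=\c_0$, that is the case $\ell=0$, feeding it with the concrete quadrature estimate established in Proposition~\ref{prop:l2_final}. The corollary is essentially a matter of substitution plus careful norm bookkeeping, so the proof should be short.

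First I would check that Assumption~\ref{assump:quad_er} is met in this case. This is immediate from Proposition~\ref{prop:l2_final}: the bound \eqref{eq:l2_est_fin} is exactly the one requested in Assumption~\ref{assump:quad_er}, with the explicit rate $\rho_0(h)=C\,h_\B^{3/2}\abs{\log h_\B^{min}}$, which tends to zero as $h_\B\to0$. With Assumption~\ref{assump:infsup_appr} granted (its proof is deferred to Section~\ref{sec:infsup}), all hypotheses of Theorem~\ref{theo:strang} are in force for $\ell=0$, and I would instantiate its conclusion. The best-approximation infima carry over verbatim, while the data contributions specialize to $h_\Omega^s\abs{\f}_{s,\Omega}$, $h_\B^s\abs{\g}_{s,\B}$ and, since $r-\ell=r$, to $h_\B^{r}\norm{\d}_{r,\B}$.

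The only delicate point is the bookkeeping of norms in the two coupling contributions $\rho_0(h)\norm{\u_h}_{1,\Omega}$ and $\rho_0(h)\norm{\llambda_h}_{0,\B}$ produced by the numerator of \eqref{eq:strang_4}. The mismatch is structural: Proposition~\ref{prop:l2_final} measures the multiplier in the strong norm $\norm{\cdot}_{0,\B}$, whereas for $\c=\c_0$ the space $\LL_0=\Hubd$ enters $\normiii{\cdot}$ through its weaker dual norm $\norm{\cdot}_{\LL}$. To reconcile the two I would invoke the inverse estimate on the finite element space $\LL_h$,
\[
\norm{\mmu_h}_{0,\B}\le \frac{C}{h_\B^{min}}\,\norm{\mmu_h}_{\LL}\qquad\forall\mmu_h\in\LL_h,
\]
which produces the negative power $1/h_\B^{min}$ in the multiplier coefficient $\tfrac{h_\B^{3/2}}{h_\B^{min}}\abs{\log h_\B^{min}}\,\norm{\llambda_h}_{\LL}$. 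For the velocity contribution the same device is applied to the test multiplier $\mmu_h$ occurring in the numerator, which is then absorbed against $\normiii{\V_h}$ in the denominator of the supremum; after simplification this leaves the coefficient $h_\B^{1/2}\abs{\log h_\B^{min}}\,\norm{\u_h}_{1,\Omega}$. Collecting the approximation, coupling and data terms then gives the announced inequality.

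I expect this norm interplay to be the main (indeed the only) obstacle: the quadrature analysis is naturally carried out in the strong $L^2$ norm, while the stability framework underlying Theorem~\ref{theo:strang} controls the multiplier only in the weak $\Hubd$ norm. One must therefore apply the inverse inequality on the correct factor and track the resulting negative power of the solid meshsize consistently. This is precisely what distinguishes the present $\ell=0$ estimate from the cleaner $\ell=1$ case, where $\norm{\cdot}_{1,\B}=\norm{\cdot}_{\LL}$ and no such conversion is needed.
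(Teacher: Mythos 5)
Your proposal is correct and follows essentially the same route as the paper's proof: verify Assumption~\ref{assump:quad_er} via Proposition~\ref{prop:l2_final} with $\rho_0(h)=h_\B^{3/2}|\log h_\B^{min}|$, instantiate Theorem~\ref{theo:strang} with $\ell=0$, and use the inverse inequality of Proposition~\ref{prop:inv_in} to pass from $\norm{\cdot}_{0,\B}$ to the dual norm $\norm{\cdot}_{\LL}$, which produces the factor $h_\B^{3/2}/h_\B^{min}$. If anything, your bookkeeping is slightly more explicit than the paper's own proof, which spells out the norm conversion only for the $\llambda_h$ term and leaves the velocity coefficient $h_\B^{1/2}|\log h_\B^{min}|$ (obtained by applying the same inverse inequality to the test multiplier in the Strang numerator, under quasi-uniformity $h_\B^{min}\sim h_\B$) implicit.
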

	\begin{proof}
		Proposition~\ref{prop:l2_final} yields that the Assumption~\ref{assump:quad_er} is satisfied with ${\rho_0(h)=h_\B^{3/2}|\log h_\B^{min}|}$. The inequality~\eqref{eq:l2_est_fin} is optimal for the regularity of the involved functions, but at the right hand side we need to have $\norm{\llambda_h}_{\LL}$, which is bounded thanks to the stability of the discrete problem (see Assumption~\ref{assump:infsup_appr} and the next section). By using the inverse inequality reported in Proposition~\ref{prop:inv_in}, we obtain
		\begin{equation}\label{eq:inv_in}
			\| \mmu_h \|_{0,\B}  \le  \frac{C_{-1}}{h_\B^{min}} \|\mmu_h\|_{-1,\B} \le  \frac{C_{-1}}{h_\B^{min}} \|\mmu_h\|_{\LL}\qquad\forall\mmu_h\in\LL_h,
		\end{equation}
		so that
		$$
		\rho_0(h)\|\llambda_h\|_{0,\B} \le C_{-1} \frac{ h_\B^{3/2}}{h_\B^{min}}|\log h_\B^{min}|\|\llambda_h\|_{\LL}.
		$$
	\end{proof}
	
	\begin{prop}\label{prop:h1_final}
		Under the same hypotheses of Propositions~\ref{prop:l2_quad_error} and \ref{prop:l2grad_estimate}, if the continuous and discrete coupling bilinear forms are given by~\eqref{eq:ch_h1} and~\eqref{eq:noint_h1}, respectively, then the following quadrature error estimate holds
		\begin{equation}\label{eq:h1_est_fin}
			\abs{\c_1(\mmu_h,\v_h(\Xbar))-\c_{1,h}(\mmu_h,\v_h(\Xbar))}\le
			C \bigg( \big( h_\B^{3/2} + h_\B^{1/2}\big)|\log h_\B^{min}| + \frac{h_\B}{h_\Omega} \bigg) \norm{\mmu_h}_{1,\B}\norm{\v_h}_{1,\Omega}
		\end{equation}
		for all $\mmu_h\in\LL_h$, $\v_h\in\Vline_h$.
	\end{prop}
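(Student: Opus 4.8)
The plan is to combine the two quadrature error estimates established in Propositions~\ref{prop:l2_quad_error} and~\ref{prop:l2grad_estimate}, exploiting the additive structure of the discrete coupling form $\c_{1,h}$ in~\eqref{eq:noint_h1} and its continuous counterpart $\c_1$ in~\eqref{eq:ch_h1}. The key observation is that both $\c_1$ and $\c_{1,h}$ split into an $\LdBd$ inner product of functions and an $\LdBd$ inner product of gradients, and crucially the \emph{same} mass-matrix quadrature rule $\{(\quadnode_k^0,\quadweigth_k^0)\}$ is used for the first part in both $\c_{0,h}$ and $\c_{1,h}$. Therefore the quadrature error of $\c_1$ decomposes exactly as a sum of the error already controlled in Proposition~\ref{prop:l2_quad_error} and the error controlled in Proposition~\ref{prop:l2grad_estimate}:
\[
\c_1(\mmu_h,\v_h(\Xbar))-\c_{1,h}(\mmu_h,\v_h(\Xbar))
= E_\B(\mmu_h\cdot\v_h(\Xbar)) + E_\B(\grads\mmu_h:\grads\v_h(\Xbar)).
\]

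First I would write out this identity, reading off the two error functionals directly from the definitions of $\c_1$, $\c_{1,h}$, $E_\B$, and noting that the gradient part uses the second rule $\{(\quadnode_k^1,\quadweigth_k^1)\}$ assumed exact for constants, as required in Proposition~\ref{prop:l2grad_estimate}. Then I would apply the triangle inequality to bound the absolute value of the left-hand side by the sum of the two estimates. From Proposition~\ref{prop:l2_quad_error} the first functional is bounded by $Ch_\B^{3/2}|\log h_\B^{min}|\,\norm{\mmu_h}_{0,\B}\norm{\v_h}_{1,\Omega}$, while from Proposition~\ref{prop:l2grad_estimate} the second is bounded by $C\bigl(h_\B^{1/2}|\log h_\B^{min}|+h_\B/h_\Omega\bigr)\norm{\grads\mmu_h}_{0,\B}\norm{\Grad\v_h}_{0,\Omega}$.

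The last step is to unify the two right-hand sides under a single norm. I would dominate both $\norm{\mmu_h}_{0,\B}$ and $\norm{\grads\mmu_h}_{0,\B}$ by the full norm $\norm{\mmu_h}_{1,\B}$, and likewise bound $\norm{\Grad\v_h}_{0,\Omega}$ by $\norm{\v_h}_{1,\Omega}$, so that every term carries the common factor $\norm{\mmu_h}_{1,\B}\norm{\v_h}_{1,\Omega}$. Collecting the coefficients then yields the prefactor $\bigl((h_\B^{3/2}+h_\B^{1/2})|\log h_\B^{min}|+h_\B/h_\Omega\bigr)$ of~\eqref{eq:h1_est_fin}. There is essentially no genuine obstacle here since the difficult analysis is already encapsulated in the two propositions; the only point requiring minor care is verifying that the two separate quadrature rules and their respective exactness hypotheses are simultaneously in force, which is guaranteed by invoking \emph{both} propositions in the statement. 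Hence the proof is a short assembly of already-proved bounds, and I expect it to be a one-paragraph argument.
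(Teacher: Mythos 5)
Your proposal is correct and matches the paper's proof exactly: the paper likewise observes that the left-hand side of~\eqref{eq:h1_est_fin} is the sum of $E_\B(\mmu_h\cdot\v_h(\Xbar))$ and $E_\B(\grads\mmu_h:\grads\v_h(\Xbar))$ and then invokes Propositions~\ref{prop:l2_quad_error} and~\ref{prop:l2grad_estimate}, with the norms $\norm{\mmu_h}_{0,\B}$, $\norm{\grads\mmu_h}_{0,\B}$, and $\norm{\Grad\v_h}_{0,\Omega}$ absorbed into $\norm{\mmu_h}_{1,\B}\norm{\v_h}_{1,\Omega}$. Your added care about the two quadrature rules and their respective exactness hypotheses is consistent with the paper, which simply states this more tersely.
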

	
	\begin{proof}
		The left hand side of~\eqref{eq:h1_est_fin} is the sum of $E_\B(\mmu_h\cdot\v_h(\Xbar))$ and ${E_\B(\grads\mmu_h:\grads\v_h(\Xbar))}$, therefore Propositions~\ref{prop:l2_quad_error} and~\ref{prop:l2grad_estimate} give the result.
	\end{proof}
	In the case of $\c=\c_1$, we have the following corollary of Theorem~\ref{theo:strang}.
	\begin{corollary}
		Within the setting of Theorem~\ref{theo:strang} and Proposition~\ref{prop:h1_final}, the following estimate holds true
		\begin{equation*}
			\begin{aligned}
				&\norm{\u-\u_h^\star}_{1,\Omega} + \norm{p-p_h^\star}_{0,\Omega} + \norm{\X-\X_h^\star}_{1,\B} + \norm{\llambda-\llambda_h^\star}_{1,\B} \\
				&\le C\,\bigg(\inf_{\v_h\in\Vline_h}\norm{\u-\v_h}_{1,\Omega} + \inf_{q_h\in Q_h}\norm{p-q_h}_{0,\Omega} + \inf_{\Y_h\in\Sline_h}\norm{\X-\Y_h}_{1,\B} + \inf_{\mmu_h\in\LL_h}\norm{\llambda-\mmu_h}_{1,\B} \\
				&\quad+\big( ( h_\B^{3/2} + h_\B^{1/2})|\log h_\B^{min}| + \frac{h_\B}{h_\Omega} \big)(\|\u_h\|_{1,\Omega}+\|\llambda_h\|_{1,\B})
				+ h_\Omega^s\,|\f|_{s,\Omega} + h_\B^s\,|\g|_{s,\B} +\,  h_\B^{r-1}\|\d\|_{r,\B}\bigg).
			\end{aligned}
		\end{equation*}
	\end{corollary}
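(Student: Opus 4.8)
The plan is to obtain this estimate as the $\ell=1$ specialization of the abstract Strang-type bound of Theorem~\ref{theo:strang}, combined with the quadrature estimate of Proposition~\ref{prop:h1_final}. The argument runs parallel to the proof of the preceding corollary (the $\c=\c_0$ case), and I would organize it in the same way; the essential difference, which I highlight below, is that the present case turns out to be cleaner because no inverse inequality is required.

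First I would check that the hypotheses of Theorem~\ref{theo:strang} are met. Assumption~\ref{assump:infsup_appr} is granted within the stated setting, while Assumption~\ref{assump:quad_er} for $\ell=1$ is exactly the content of Proposition~\ref{prop:h1_final}: the bound~\eqref{eq:h1_est_fin} identifies
\[
\rho_1(h) = \big(h_\B^{3/2}+h_\B^{1/2}\big)|\log h_\B^{min}| + \frac{h_\B}{h_\Omega},
\]
which tends to zero as $h\to0$ provided $\T_h^\Omega$ is quasi-uniform and $h_\B/h_\Omega\to0$. With these in hand, I would simply insert $\ell=1$ and this explicit $\rho_1$ into the conclusion of Theorem~\ref{theo:strang}. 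The best-approximation terms and the data-dependent contributions carry over directly; in particular the $\d$-term becomes $h_\B^{r-1}\|\d\|_{r,\B}$, as prescribed by~\eqref{eq:rhs_d} with $\ell=1$, and the factor $\rho_1(h)$ multiplies both $\|\u_h\|_{1,\Omega}$ and $\|\llambda_h\|_{1,\B}$, which I would collect into the single coefficient $\|\u_h\|_{1,\Omega}+\|\llambda_h\|_{1,\B}$.

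The point at which this case departs from the $\ell=0$ corollary --- and the only step deserving genuine attention --- is the treatment of the multiplier term. Here $\LL=\LL_1=\Hub$ carries the norm $\|\cdot\|_\LL=\|\cdot\|_{1,\B}$, so the $H^1(\B)$ norm delivered by the quadrature estimate~\eqref{eq:h1_est_fin} coincides \emph{exactly} with the multiplier norm controlled by the stability of Problem~\ref{pro:discrte_stationary_general} (Proposition~\ref{prop:infsup_discreteproblem}). Consequently the inverse inequality~\eqref{eq:inv_in}, which was needed in the $\c_0$ case to convert $\|\llambda_h\|_{0,\B}$ into the dual norm $\|\llambda_h\|_{-1,\B}$ and which introduced the spurious factor $1/h_\B^{min}$, is here unnecessary. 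I do not anticipate any real obstacle: once $\|\u_h\|_{1,\Omega}$ and $\|\llambda_h\|_{1,\B}$ are bounded by discrete stability, the stated inequality follows by mere assembly of the ingredients, with the $H^1$-conforming choice of multiplier space ensuring that the estimate is optimal and free of any loss in powers of $h$.
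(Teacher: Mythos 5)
Your proposal is correct and matches the paper's reasoning: the paper states this corollary without a separate proof precisely because it is the direct $\ell=1$ instantiation of Theorem~\ref{theo:strang} with $\rho_1(h)$ supplied by Proposition~\ref{prop:h1_final}, which is exactly what you carry out. Your observation that no inverse inequality is needed here---since $\|\cdot\|_{\LL}=\|\cdot\|_{1,\B}$ when $\c=\c_1$, so the quadrature bound already controls the multiplier in the stability norm---is also the correct explanation of why this case avoids the $1/h_\B^{min}$ loss appearing in the $\c_0$ corollary.
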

	
	We conclude this section by proving, in a general framework, the inverse inequality we used in~\eqref{eq:inv_in}.
	\begin{prop}\label{prop:inv_in}
		Let $\T_h$ be a mesh of the domain $D\subset\R^2$ and let
		$V_h$ be a finite element space of degree $m$. Then, there exists a constant $C$ such that for all discrete function $v_h\in V_h$ it holds
		\[
		\sum_{\tri\in\T_h}h_\tri^2\|v_h\|_{0,\tri}^2 \le C\|v_h\|_{-1,D}^2,
		\]
		where $C$ depends on the degree of $V_h$ and on the shape regularity constant of the mesh~$\T_h$.
	\end{prop}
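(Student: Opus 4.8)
The plan is to prove the inequality by duality, starting from the characterization
\[
\norm{v_h}_{-1,D} = \sup_{\substack{\phi\in H^1_0(D)\\ \phi\neq 0}} \frac{(v_h,\phi)_D}{\norm{\phi}_{1,D}},
\]
and then exhibiting a single test function $\phi\in H^1_0(D)$, built from $v_h$ itself, that simultaneously makes $(v_h,\phi)_D$ bounded below by a multiple of $\sum_{\tri\in\T_h} h_\tri^2\norm{v_h}_{0,\tri}^2$ and keeps $\norm{\phi}_{1,D}$ bounded above by a multiple of $(\sum_{\tri\in\T_h} h_\tri^2\norm{v_h}_{0,\tri}^2)^{1/2}$. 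Once such a $\phi$ is available, inserting it into the supremum, dividing, and squaring yields the claim. Note that $v_h$ is piecewise polynomial, hence in $L^2(D)\subset H^{-1}(D)$, so the pairing $(v_h,\phi)_D$ is simply the $L^2$ inner product and no continuity of $v_h$ is required.

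The construction uses the standard interior bubble function. On each element let $b_\tri$ denote the element bubble, i.e. the normalized product of the barycentric coordinates, which is supported in $\tri$, vanishes on $\partial\tri$, and satisfies $0\le b_\tri\le 1$. I would set
\[
\phi = \sum_{\tri\in\T_h} h_\tri^2\, b_\tri\, v_h,
\]
where each product $b_\tri v_h$ is understood as $b_\tri$ times the restriction of $v_h$ to $\tri$, extended by zero. Because each summand is supported in the closed element $\tri$ and vanishes on $\partial\tri$, the function $\phi$ is continuous across interelement edges and vanishes on $\partial D$; being piecewise polynomial, it therefore belongs to $H^1_0(D)$. The three ingredients I would invoke are the bubble norm equivalences on a shape-regular element: for every polynomial $p$ of degree at most $m$,
\[
\norm{p}_{0,\tri}^2 \le C\int_\tri b_\tri\,p^2\,dx, \qquad \norm{b_\tri p}_{0,\tri}\le C\norm{p}_{0,\tri}, \qquad \abs{b_\tri p}_{1,\tri}\le \frac{C}{h_\tri}\norm{p}_{0,\tri},
\]
all of which follow by mapping to a reference element, using that all norms are equivalent on the finite-dimensional polynomial space, and scaling back; the constants depend only on $m$ and on the shape-regularity of $\T_h$.

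With these tools the two bounds are immediate. The lower bound reads
\[
(v_h,\phi)_D = \sum_{\tri\in\T_h} h_\tri^2\int_\tri b_\tri\,v_h^2\,dx \ge c\sum_{\tri\in\T_h} h_\tri^2\norm{v_h}_{0,\tri}^2,
\]
using the first equivalence. For the upper bound, the second and third equivalences give, on each element, $\norm{h_\tri^2 b_\tri v_h}_{0,\tri}^2\le C h_\tri^4\norm{v_h}_{0,\tri}^2$ and $\abs{h_\tri^2 b_\tri v_h}_{1,\tri}^2\le C h_\tri^2\norm{v_h}_{0,\tri}^2$; since $D$ is bounded we may absorb $h_\tri^4\le C h_\tri^2$, so summing over elements yields $\norm{\phi}_{1,D}^2\le C\sum_{\tri\in\T_h} h_\tri^2\norm{v_h}_{0,\tri}^2$. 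Plugging $\phi$ into the duality characterization and combining the two bounds gives $\norm{v_h}_{-1,D}\ge (c/C)(\sum_{\tri\in\T_h} h_\tri^2\norm{v_h}_{0,\tri}^2)^{1/2}$, and squaring produces the stated estimate. I expect the only delicate point to be the lower bubble equivalence $\norm{p}_{0,\tri}^2\le C\int_\tri b_\tri p^2\,dx$, which is where the dependence of $C$ on the polynomial degree $m$ enters and which must be established through the reference-element scaling argument; checking $\phi\in H^1_0(D)$ (continuity across edges and vanishing boundary trace) is routine, since the bubbles are supported in the element interiors.
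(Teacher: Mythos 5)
Your proof is correct, but it follows a genuinely different route from the paper's. The paper factors the result into two ingredients: a localization lemma of Schatz and Wahlbin, $\sum_{\tri\in\T_h}\norm{v}_{-1,\tri}^2\le\norm{v}_{-1,D}^2$, combined with a purely local inverse estimate $\norm{v_h}_{0,\tri}\le Ch_\tri^{-1}\norm{v_h}_{-1,\tri}$ on each element, the latter proved by mapping to the reference triangle, invoking equivalence of the $L^2$ and $H^{-1}$ norms on the finite-dimensional space $\Pcal_m(\reftri)$, and scaling back. You instead work globally by duality, constructing the explicit test function $\phi=\sum_{\tri\in\T_h}h_\tri^2 b_\tri v_h$ built from element bubbles, in the style of Verf\"urth's a posteriori error analysis; the three bubble equivalences you list are standard and are themselves proved by exactly the kind of reference-element scaling the paper uses for its local estimate. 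What your approach buys is self-containedness: no citation of the Schatz--Wahlbin lemma is needed, because the disjointly supported bubbles perform the localization explicitly. What the paper's approach buys is modularity: the localization principle and the local inverse inequality are each reusable statements, and the scaling argument is done once on a fixed finite-dimensional polynomial space rather than being distributed across three bubble estimates. Both arguments use only the element-wise polynomial structure of $v_h$, so both remain valid for discontinuous finite element spaces, and in both cases the constant depends only on $m$ and the shape regularity of $\T_h$. The only points worth tightening in your write-up are cosmetic: the degenerate case $v_h=0$ (where $\phi=0$ and one cannot divide) should be dispatched as trivial, and the absorption $h_\tri^4\le Ch_\tri^2$ should be noted to introduce a harmless dependence of $C$ on $\operatorname{diam}(D)$.
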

	\begin{proof}
		It follows from a result by Schatz and Wahlbin \cite[Lemma 1.1]{SW} that 
		\[
		\sum_{\tri\in\T_h}\|v\|_{-1,\tri}^2\le \|v\|_{-1,D}^2, \qquad \forall v\in H^{-1}(D).
		\]
		If we prove that
		\begin{equation}\label{ineq: inverse1}
			\|v_h\|_{0,\tri}\le Ch_\tri^{-1}\|v_h\|_{-1,\tri}\qquad \forall v_h\in \Pcal_m(\tri),
		\end{equation}
		with $C$ depending only on $m$ and the aspect ratio of $\tri$, then the statement follows by
		\[
		\sum_{\tri\in \T_h}h_\tri^2\|v_h\|_{0,\tri}^2\le C\sum_{\tri\in \T_h}\|v_h\|_{-1,\tri}^2\le C\|v_h\|_{-1,D}^2.
		\]
		It remains to prove \eqref{ineq: inverse1}, which follows by standard scaling arguments. Let $\reftri$ be a reference triangle, and $F_\tri$ a linear function mapping $\reftri$ onto $\tri$, and define $\widehat v_h$ by $\widehat v_h(\widehat{\x})=v_h({\x})$ with ${\x}=F_\tri(\widehat{\x})$. Then
		\begin{equation}\label{ineq: 2norm}
			\|v_h\|_{0,\tri}\le Ch_\tri\|\widehat v_h\|_{0,\reftri},
		\end{equation}
		and by equivalence of norms in $\Pcal_m(\reftri)$ we have
		\begin{equation}\label{ineq: equiv}
			\|\widehat v_h\|_{0,\reftri}\le C\|\widehat v_h\|_{-1,\reftri}.
		\end{equation}
		By definition
		\[
		\|\widehat v_h\|_{-1,\reftri}=\sup_{\widehat w\in H_0^1(\reftri)}\frac{\left(\widehat v_h,\widehat w\right)_{\reftri}}{\|\widehat w\|_{1,\reftri}}.
		\]
		But, since $\widehat w(\widehat{\x})=w({\x})$,
		\[
		\left(\widehat v_h,\widehat w\right)_{\reftri}\le Ch_\tri^{-2}\left(v_h,w\right),
		\]
		and
		\[
		\|\widehat w\|_{1,\reftri} \ge C h_\tri^{-1}\left(\sum_{i=0}^1 h_\tri^{2i}|w|^2_{i,\tri}\right)^{1/2}\ge C \|w\|_{1,\tri}.
		\]
		Therefore
		\begin{equation}\label{ineq: -tnorm}
			\|\widehat v_h\|_{-1,\reftri}\le Ch_\tri^{-2}\|v_h\|_{-1,\tri}.
		\end{equation}
		Then, from \eqref{ineq: 2norm}--\eqref{ineq: -tnorm} we
		obtain~\eqref{ineq: inverse1}.
	\end{proof}
	
	\DB
	
	\begin{rem}
		When dealing with fluid-structure interaction problems, the solution of the
		fictitious domain approach is singular across the interface between fluid and
		solid.
		Typically, for instance, $\u$ belongs to
		$\mathbf{H}^{3/2-\varepsilon}(\Omega)$ but not in $\mathbf{H}^{3/2}(\Omega)$
		because of the jump of the normal stresses. It is then natural to consider
		\emph{optimal} an error estimate like the one of Corollary~\ref{co:L2} which
		gives a rate of convergence of $O(h^{1/2})$ up to the logarithmic factor.
		
		It could be interesting however to see if, in case of smoother solutions, a
		higher rate of convergence could be obtained which fully exploits the
		polynomial order used for the approximation. It turns out that this is the
		case as we are going to show now.
		
	\end{rem}
	
	The following proposition refines the estimate of Corollary~\ref{co:L2} in the
	case of smooth solutions.
	
	\begin{prop}
		Let us assume that $(\u,p,\X,\llambda)$ belongs to $\mathbf{H}^2(\Omega)\times
		H^1(\Omega)\times\mathbf{H}^2(\B)\times\mathbf{L}^2(\B)$ and that
		\begin{equation}\label{eq:regu+}
			\|\u\|_{2,\Omega}+\|p\|_{1,\Omega}+\|\X\|_{2,\B}+\|\llambda\|_{0,\B}\le
			C(\|\f\|_{0,\Omega}+\|\g\|_{0,\B}+\|\d\|_{2,\B}).
		\end{equation}
		Moreover, let us assume for simplicity that the right hand sides in
		Problem~\ref{pro:approx_stationary_general} are computed exactly and that the
		mesh $\T_h^\B$ is quasiuniform. Then under the same hypotheses as in
		Corollary~\ref{co:L2} we have
		\[
		\begin{aligned}
			\norm{\u-\u_h^\star}_{1,\Omega}&+\norm{p-p_h^\star}_{0,\Omega}+
			\norm{\X-\X_h^\star}_{1,\B}+\norm{\llambda-\llambda_h^\star}_{\LL}\\
			&\le C\max(h_\Omega,h_\B)\big(|\u|_{2,\Omega}+|p|_{1,\Omega}+
			|\X|_{2,\B}+\|\llambda\|_{0,\B}\big).
		\end{aligned}
		\]
	\end{prop}
	
	\begin{proof}
		
		As for the proof of Corollary~\ref{co:L2}, we need to estimate the various
		consistency terms appearing in Equation~\eqref{eq:strang_4}. Here we comment
		on how to improve the bound of the following two terms under our regularity
		assumptions:
		\begin{equation}\label{eq:sups}
			\begin{aligned}
				&\sup_{\mmu_h\in\LL_h}\frac{|\c_0(\mmu_h,\u_h(\Xbar))-\c_{0,h}(\mmu_h,\u_h(\Xbar))|}{\|\mmu_h\|_{\LL}},\\
				&\sup_{\v_h\in\Vline_h}\frac{|\c_0(\llambda_h,\v_h(\Xbar))-\c_{0,h}(\llambda_h,\v_h(\Xbar))|} {\|\v_h\|_{1,\Omega}},
			\end{aligned}
		\end{equation}
		here $\llambda_h$ and $\u_h$ are components of the solution to
		Problem~\ref{pro:discrte_stationary_general}.
		
		We estimate the first term in~\eqref{eq:sups}. 
		We observe that~\eqref{eq:disallineata} holds true even when the function
		$\v_h(\Xbar)$ is replaced by $\v_h(\Xbar)-\w_h$ where $\w_h$ is any function
		of $\S_h$.
		Hence, we introduce the piecewise linear interpolant $\II_\B\u(\Xbar)\in\S_h$
		of $\u(\Xbar)$ on the mesh $\T_\B$ and proceed as follows.
		\[
		\aligned
		&\c_0(\mmu_h,\u_h(\Xbar))-\c_{0,h}(\mmu_h,\u_h(\Xbar))\\
		&\quad=\c_0(\mmu_h,\u_h(\Xbar)-\II_\B\u(\Xbar))
		-\c_{0,h}(\mmu_h,\u_h(\Xbar)-\II_\B\u(\Xbar))\\
		&\quad\le C\sum_{\tri\in\T_\B}h^{3/2}_\tri{|\log h_\tri|}
		\|\mmu_h\|_{0,\tri}\|{\grads(\u_h(\Xbar)-\II_\B\u(\Xbar))}\|_{0,\tri}\\
		&\quad\le C\sum_{\tri\in\T_\B}h^{3/2}_\tri{|\log h_\tri|}
		\|\mmu_h\|_{0,\tri}(\|{\grads(\u_h(\Xbar)-\u(\Xbar))}\|_{0,\tri}\\
		&{\hspace{5cm}}+\|{\grads(\u(\Xbar)-\II_\B\u(\Xbar))}\|_{0,\tri})\\
		&\quad\le Ch^{3/2}_\B{|\log h_\B|}\|\mmu_h\|_{0,\B}\|\u_h-\u\|_{1,\Omega}
		+C\sum_{\tri\in\T_\B}h^{3/2}_\tri{|\log h_\tri|}\|\mmu_h\|_{0,\tri}h_\tri|\u(\Xbar)|_{2,\tri}\\
		&\quad\le Ch^{1/2}_\B{|\log h_\B|}\|\mmu_h\|_\LL\|\u_h-\u\|_{1,\Omega}
		+Ch^{3/2}_\B{|\log h_\B|}\|\mmu_h\|_\LL|\u|_{2,\Omega},
		\endaligned
		\]
		where in the last inequality we used the inverse estimate
		in~\eqref{eq:inv_in}.
		
		The regularity assumption~\eqref{eq:regu+} and Theorem~\ref{theo:brezzi} imply
		\begin{equation}
			\label{eq:XX_h}
			\begin{aligned}
				\norm{\u-\u_h}_{1,\Omega}& + \norm{\X-\X_h}_{1,\B}\\
				&\le
				C\,\big(h_\Omega(\|\u\|_{2,\Omega}+\|p\|_{1,\Omega})+
				h_\B(\|\X\|_{2,\B}+\|\llambda\|_{0,\B})\big).
			\end{aligned}
		\end{equation}
		We thus have
		\begin{equation}\label{eq:bound_u}
			\begin{aligned}
				&\sup_{\mmu_h\in\LL_h}\frac{|\c_0(\mmu_h,\u_h(\Xbar))-\c_{0,h}(\mmu_h,\u_h(\Xbar))|}{\|\mmu_h\|_{\LL}}\\
				&\quad\le C\,h_\B^{1/2}|\log h_\B| \max(h_\Omega,h_\B)\bigg(\|\u\|_{2,\Omega}+\|p\|_{1,\Omega}+\|\X\|_{2,\B}+\|\llambda\|_{0,\B}\bigg)\\
				&\qquad\qquad+ C\,h_\B(\|\u\|_{2,\Omega}+\|\X\|_{2,\B}).\\
			\end{aligned}
		\end{equation}
		
		The second term in~\eqref{eq:sups} can be bounded using Proposition~\ref{prop:l2_final} as
		follows:
		\[
		\sup_{\v_h\in\Vline_h}
		\frac{|\c_0(\llambda_h,\v_h(\Xbar))-\c_{0,h}(\llambda_h,\v_h(\Xbar))|}
		{\|\v_h\|_{1,\Omega}} \le
		C h_\B^{3/2}|\log h_\B| \|\llambda_h\|_{0,\B}.
		\]
		We now estimate $\|\llambda_h\|_{0,\B}$ exploiting the regularity of
		$\llambda$. We consider the $L^2$ projection $\Pi^0:\LdBd\rightarrow\LL_h$, that is for any ${\boldsymbol{\eta}\in\LdBd}$, $(\boldsymbol{\eta}-\Pi^0\boldsymbol{\eta},\mmu_h)=0$ for all $\mmu_h\in\LL_h$. The projection has the following properties thanks to the fact that $\LL_h=\Sline_h$:
		\begin{equation}\label{eq:proj_prop}
			\|\Pi^0\boldsymbol{\eta}\|_{0,\B}\le\|\boldsymbol{\eta}\|_{0,\B},
			\qquad\qquad
			\|\boldsymbol{\eta}-\Pi^0\boldsymbol{\eta}\|_\LL \le C\,h_\B\|\boldsymbol{\eta}\|_{0,\B}.
		\end{equation}
		In order to estimate $\|\llambda_h\|_{0,\B}$, we bound
		$\|\llambda_h-\llambdaI\|_{0,\B}$ taking into account that $\S_h=\LL_h$
		\begin{equation}
			\aligned
			\|\llambdaI-\llambda_h\|_{0,\B}&
			= \frac{(\llambdaI-\llambda_h,\llambdaI-\llambda_h)_\B}{\|\llambdaI-\llambda_h\|_{0,\B}}
			\le
			\sup_{\Y_h\in\S_h}\frac{\c_0(\llambdaI-\llambda_h,\Y_h)}{\|\Y_h\|_{0,\B}}\\
			&
			=\sup_{\Y_h\in\S_h}\frac{\c_0(\llambdaI-\llambda,\Y_h)+\c_0(\llambda-\llambda_h,\Y_h)}
			{\|\Y_h\|_{0,\B}}.
			\endaligned
		\end{equation}
		Since $\llambdaI$ is the $L^2$ projection of $\llambda$ in $\LL_h=\S_h$, the
		term $\c_0(\llambdaI-\llambda_h,\Y_h)$ vanishes. 
		Moreover, the difference of the solid equations in
		Problems~\ref{pro:stationary_general} and~\ref{pro:discrte_stationary_general}
		gives
		\[
		\c_0(\llambda-\llambda_h,\Y_h)=\a_s(\X-\X_h,\Y_h)
		\quad\forall\Y_h\in\Sline_h.
		\]
		Putting together the last two relations, a standard inverse inequality yields
		\begin{equation}
			\begin{aligned}
				\|\llambdaI-\llambda_h\|_{0,\B}
				&\le \sup_{\Y_h\in\S_h}\frac{\a_s(\X-\X_h,\Y_h)}{\|\Y_h\|_{0,\B}}\\
				&\le C\,\sup_{\Y_h\in\S_h} \frac{\|\X-\X_h\|_{1,\B}\|\Y_h\|_{1,\B}}{\|\Y_h\|_{0,\B}}\\
				&\le C\, h_\B^{-1}\|\X-\X_h\|_{1,\B}.
			\end{aligned}
		\end{equation}
		The bound~\eqref{eq:XX_h} implies that
		\begin{equation}
			\begin{aligned}
				\|\llambda_h\|_{0,\B}&\le\|\llambdaI\|_{0,\B}+\|\llambdaI-\llambda_h\|_{0,\B}\\
				&\le \|\llambda\|_{0,\B}+
				C\,\big(h_\B^{-1}h_\Omega(\|\u\|_{2,\Omega}+\|p\|_{1,\Omega})
				+\|\X\|_{2,\B}+\|\llambda\|_{0,\B}\big)
			\end{aligned}
		\end{equation}
		so that $\llambda_h$ is bounded in $\mathbf{L}^2(\B)$.
		Hence, we have the following estimate for the second term in~\eqref{eq:sups}
		\begin{equation}
			\label{eq:bound_l}
			\aligned
			\sup_{\v_h\in\Vline_h}&
			\frac{|\c_0(\llambda_h,\v_h(\Xbar))-\c_{0,h}(\llambda_h,\v_h(\Xbar))|}{\|\v_h\|_{1,\Omega}}
			\le C h_\B^{3/2}|\log h_\B| \|\llambda_h\|_{0,\B}\\
			&\le  C h_\B^{3/2}|\log h_\B|\|\llambda\|_{0,\B}\\
			&\quad
			+Ch_\B^{3/2}|\log h_\B|\Big(h_\B^{-1}h_\Omega(\|\u\|_{2,\Omega}+\|p\|_{1,\Omega})
			+\|\X\|_{2,\B}+\|\llambda\|_{0,\B}\Big).
			\endaligned
		\end{equation}

		Finally, by combining the estimates in~\eqref{eq:bound_u}
		and~\eqref{eq:bound_l} with~\eqref{eq:strang_4} and
		Proposition~\ref{prop:l2_final}, we obtain the desired estimate.
		
	\end{proof}
	
	\BD
	
	\section{Stability proof with inexact integration}\label{sec:infsup}
	
	This section is devoted to check Assumption~\ref{assump:infsup_appr}, which
	corresponds to the discrete inf-sup condition in the case of inexact integration of the coupling term. We adopt the approach already used in \cite{stat} and based on \cite{xu2003some}. Exploiting the saddle point structure of $\A_h$ in operator form
	\begin{equation*}
		\left[\begin{array}{@{}cc:c@{}}
			\Amatr_f & 0 & \Cmatr_{f,h}^\top\\
			0 & \Amatr_s & -\Cmatr_s^\top \\
			\hdashline
			\Cmatr_{f,h} & -\Cmatr_s & 0
		\end{array}\right],
	\end{equation*}
	the proof splits into two steps: first, we prove the inf-sup condition for
	\begin{equation*}
		\C_h = [\Cmatr_{f,h},\,-\Cmatr_s]
	\end{equation*}
	and then we show that
	\begin{equation*}
		\left[\begin{array}{@{}cc@{}}
			\Amatr_f & 0\\
			0 & \Amatr_s
		\end{array}\right]
	\end{equation*}
	is elliptic in the kernel of $\C_h$.
	
	\begin{prop}\label{prop:infsupch}
		Let us assume that the $\LdBd$ term of $\c_h$ is computed with a quadrature rule $\{(\quadnode_k^0,\quadweigth_k^0)\}_{k=1}^{K_0}$ which is exact for quadratic polynomials, while the $\LdBd$ scalar product of gradients is approximated with a quadrature rule $\{(\quadnode_k^1,\quadweigth_k^1)\}_{k=1}^{K_1}$ which is exact for constants. Moreover, we assume that the $L^2$ projection $\projLdue$ from $\Hub$ to $\Sline_h$ is $H^1$ stable, that is $\|\projLdue\Y\|_{1,\B}\le C_0 \|\Y\|_{1,\B}$ for all $\Y\in\Hub$. Then, there exists a constant $\beta_0$ such that the following condition holds true
		\begin{equation}
			\sup_{(\v_h,\Y_h)\in\Vdiv \times \Sline_h} \frac{\c_h(\mmu_h,\v_h(\Xbar)-\Y_h)}{(\norm{\v_h}^2_{1,\Omega} + \norm{\Y_h}^2_{1,\B})^{1/2}} \ge \beta_0 \norm{\mmu_h}_{\LL}\qquad\forall\mmu_h\in\LL_h,
		\end{equation}
		where 
		\begin{equation*}
			\Vdiv = \{\v_h\in\Vline_h:(\div\v_h,q_h)_\Omega=0\quad\forall q_h\in Q_h\}.
		\end{equation*}
	\end{prop}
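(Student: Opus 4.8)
The plan is to exploit an asymmetry in the form $\c_h(\mmu_h,\v_h(\Xbar)-\Y_h)$: by the definition of $\A_h$, only the fluid contribution $\c_h(\mmu_h,\v_h(\Xbar))$ is affected by numerical integration, whereas the solid contribution $\c(\mmu_h,\Y_h)$ is computed exactly. Since the supremum need only be bounded from below by one convenient test pair, I would simply take $\v_h=\mathbf 0\in\Vdiv$. This annihilates the inexact term and reduces the claim to the purely solid-side inf-sup
\[
\sup_{\Y_h\in\Sline_h}\frac{-\c(\mmu_h,\Y_h)}{\norm{\Y_h}_{1,\B}}\ge\beta_0\norm{\mmu_h}_\LL\qquad\forall\mmu_h\in\LL_h,
\]
which involves no quadrature and is exactly the inf-sup of the unperturbed block $\Cmatr_s$ already contained in the exact theory behind Proposition~\ref{prop:infsup_discreteproblem}. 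The quadrature assumptions in the statement are therefore not strictly needed for this reduction; they are carried along because they keep the diagonal solid couplings exact and are used in the companion ellipticity step.

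It then remains to prove the solid-side bound, and the two choices of $\c$ must be handled separately. For $\c=\c_1$ the form is the full $\Hub$ inner product and $\norm{\mmu_h}_\LL=\norm{\mmu_h}_{1,\B}$; I would take $\Y_h=-\mmu_h\in\Sline_h=\LL_h$, so that $-\c_1(\mmu_h,\Y_h)=\norm{\mmu_h}_{1,\B}^2$ and $\norm{\Y_h}_{1,\B}=\norm{\mmu_h}_{1,\B}$, giving the quotient $\norm{\mmu_h}_{1,\B}=\norm{\mmu_h}_\LL$ and hence $\beta_0=1$.

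The delicate case is $\c=\c_0$, where $\norm{\mmu_h}_\LL$ is the dual norm of $\Hub$, namely $\sup_{\Y\in\Hub}(\mmu_h,\Y)_\B/\norm{\Y}_{1,\B}$ under the $\LdBd$ identification of the pairing. Here the naive choice $\Y_h=-\mmu_h$ fails: it yields only $\norm{\mmu_h}_{0,\B}^2/\norm{\mmu_h}_{1,\B}$, which by interpolation is bounded \emph{above} (not below) by the dual norm, i.e. it points the wrong way. This is precisely where the $H^1$-stability of the $L^2$-projection $\projLdue$ enters, and it is the crux of the argument. For given $\epsilon>0$ I would select $\Y\in\Hub$ with $\norm{\Y}_{1,\B}=1$ and $(\mmu_h,\Y)_\B\ge\norm{\mmu_h}_\LL-\epsilon$, and set $\Y_h=-\projLdue\Y\in\Sline_h$. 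Self-adjointness of the $L^2$-projection together with $\projLdue\mmu_h=\mmu_h$ (since $\mmu_h\in\Sline_h$) gives $(\mmu_h,\projLdue\Y)_\B=(\mmu_h,\Y)_\B$, while stability gives $\norm{\projLdue\Y}_{1,\B}\le C_0$; letting $\epsilon\to0$ produces the bound with $\beta_0=1/C_0$.

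The only genuine obstacle is thus the $\c_0$ case: controlling an $H^{-1}$-type multiplier norm by a discrete solid test function, which is impossible without the $H^1$-stability hypothesis on $\projLdue$. Everything else is either immediate (the $\v_h=\mathbf 0$ reduction and the $\c_1$ computation) or follows from the exact discrete inf-sup already recorded for the unperturbed problem. I would close by collecting the two cases into the single constant $\beta_0$, noting its independence of $h$ since $C_0$ is the $h$-independent stability constant of $\projLdue$.
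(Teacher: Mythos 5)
Your proposal is correct and takes essentially the same route as the paper's own proof: pick $\v_h=\mathbf{0}\in\Vdiv$, then $\Y_h=\pm\mmu_h$ for $\c_1$, and for $\c_0$ pass from the $\Hubd$-norm to a discrete solid test function via the self-adjoint, $H^1$-stable $L^2$ projection $\projLdue$ (the paper uses a sup-attaining $\Ytilde$ where you use an $\epsilon$-argument, which is immaterial). The one remark to fix is your claim that the quadrature assumptions are not strictly needed for the $\v_h=\mathbf{0}$ reduction: the statement applies $\c_h$, not $\c$, to $\v_h(\Xbar)-\Y_h$, so that reduction rests precisely on the exactness of the rules on products of piecewise-linear solid functions (quadratics for the $\LdBd$ term, constants for the gradient term), which is exactly how the paper argues that $\c_h$ and $\c$ coincide on the solid side.
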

	
	\begin{proof}
		\textit{Case 1.} Let us consider $\c=\c_0$ and $\c_h=\c_{0,h}$ given by~\eqref{eq:ch_l2}. Using the definition of norm in the dual space $\LL=\Hubd$, there exists $\Ytilde\in\Hub$ realizing the supremum in the continuous case, so that
		\begin{equation}
			\norm{\mmu_h}_{\LL}
			= \sup_{\Y\in\Hub} \frac{(\mmu_h,\Y)_\B}{\norm{\Y}_{1,\B}}
			= \sup_{\Y\in\Hub} \frac{\c_0(\mmu_h,\Y)}{\norm{\Y}_{1,\B}}
			= \frac{\c_0(\mmu_h,\Ytilde)}{\|\Ytilde\|_{1,\B}}.
		\end{equation}
		By exploiting the projection $\projLdue$, we have
		\begin{equation}
			\frac{\c_0(\mmu_h,\Ytilde)}{\|\Ytilde\|_{1,\B}}
			= \frac{\c_0(\mmu_h,\projLdue\Ytilde)}{\|\Ytilde\|_{1,\B}}
			\le C_0 \frac{\c_0(\mmu_h,\projLdue\Ytilde)}{\|\projLdue\Ytilde\|_{1,\B}}
			\le \sup_{\Y_h\in\Sline_h} \frac{\c_0(\mmu_h,\Y_h)}{\norm{\Y_h}_{1,\B}}.
		\end{equation}
		Thanks to the exactness of the chosen quadrature rule, we can notice that $\c$ and $\c_h$ coincide in the solid domain, therefore
		\begin{equation}\label{eq:dim_infsup_L2}
			\begin{aligned}
				\norm{\mmu_h}_{\LL} &\le \sup_{\Y_h\in\Sline_h} \frac{\c_0(\mmu_h,\Y_h)}{\norm{\Y_h}_{1,\B}} =  \sup_{\Y_h\in\Sline_h} \frac{\c_{0,h}(\mmu_h,\Y_h)}{\norm{\Y_h}_{1,\B}}\\
				&\le \sup_{(\v_h,\Y_h)\in\Vdiv \times \Sline_h}
				\frac{\c_{0,h}(\mmu_h,\DB\Y_h-\v_h(\Xbar)\BD)}{(\norm{\v_h}^2_{1,\Omega} + \norm{\Y_h}^2_{1,\B})^{1/2}}.
			\end{aligned}
		\end{equation}
		\textit{Case 2.} Now, let us take $\c=\c_1$ and $\c_h=\c_{1,h}$ as defined in \eqref{eq:c_h1} and \eqref{eq:ch_h1}, respectively. Given $\mmu_h\in\LL_h$, we can take $\Y_h=\mmu_h$ so that
		\begin{equation}
			\begin{aligned}
				\norm{ \mmu_h }_{\LL} &= \frac{(\mmu_h,\Y_h)_\B +(\grads\mmu_h,\grads\Y_h)_\B}{\norm{\Y_h}_{1,\B}} = \frac{\c_1(\mmu_h,\Y_h)}{\norm{\Y_h}_{1,\B}}
				\le \sup_{\Y_h\in\Sline_h} \frac{\c_1(\mmu_h,\Y_h)}{\norm{\Y_h}_{1,\B}}.
			\end{aligned}
		\end{equation}
		Therefore, working as in \eqref{eq:dim_infsup_L2}, the result is proved.
	\end{proof}
	
	We now prove the ellipticity of $\A_h$ in the kernel of $\C_h$.
	
	\begin{prop} \label{prop:ellipticity} 
		Let us assume that the $\LdBd$ term of $\c_h$ is computed with a quadrature rule $\{(\quadnode_k^0,\quadweigth_k^0)\}_{k=1}^{K_0}$ which is exact for quadratic polynomials, while the $\LdBd$ scalar product of gradients is approximated with a quadrature rule $\{(\quadnode_k^1,\quadweigth_k^1)\}_{k=1}^{K_1}$ which is exact for constants.
		Then, there exists $\theta^\star>0$ independent on the mesh sizes such that
		\begin{equation}
			\a_f(\u_h,\u_h) + \a_s(\X_h,\X_h) \ge \theta^\star (\norm{\u_h}_{1,\Omega}^2 + \norm{\X_h}_{1,\B}^2)
		\end{equation}
		for all pairs $(\u_h,\X_h)$ in the kernel of $\C_h$ defined as
		\begin{equation}
			\K_{\C_h} = \{ (\v_h,\Y_h)\in\Vdiv\times\S_h:\c_h(\mmu_h,\v_h(\Xbar))-\c(\mmu_h,\Y_h)=0\quad\forall\mmu_h\in\LL_h\}.
		\end{equation}
	\end{prop}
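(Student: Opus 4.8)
The plan is to exploit the block-diagonal structure of the form on the left-hand side: since $\a_f$ acts only on the fluid velocity and $\a_s$ only on the solid map, with no cross terms between $\u_h$ and $\X_h$, it suffices to establish coercivity of each form separately on its own space. The constraint defining the kernel $\K_{\C_h}$ then plays no role, because a form that is coercive on the whole space is a fortiori coercive on any subspace; likewise the quadrature hypotheses are irrelevant for this step, since $\a_f$ and $\a_s$ are integrated exactly and only $\C_h$ involves numerical integration. I therefore expect the argument to reduce to two standard coercivity estimates.

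First I would treat the solid form. By its very definition $\a_s(\X_h,\X_h)=\beta\norm{\X_h}_{0,\B}^2+\kappa\abs{\X_h}_{1,\B}^2$, which, for $\beta,\kappa>0$, is bounded below by $\min(\beta,\kappa)\norm{\X_h}_{1,\B}^2$. This yields the estimate for the second block with constant $\theta_s=\min(\beta,\kappa)$ and requires no further work.

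Next I would treat the fluid form. Here $\a_f(\u_h,\u_h)=\alpha\norm{\u_h}_{0,\Omega}^2+\nu\norm{\Grads\u_h}_{0,\Omega}^2\ge\nu\norm{\Grads\u_h}_{0,\Omega}^2$, where the term in $\alpha\ge 0$ is simply dropped. Since $\u_h\in\Huo$, Korn's first inequality bounds $\norm{\Grads\u_h}_{0,\Omega}^2$ from below by a fixed multiple of the full gradient seminorm $\abs{\u_h}_{1,\Omega}^2$, and the Poincar\'e inequality on $\Huo$ then controls $\norm{\u_h}_{0,\Omega}$ by $\abs{\u_h}_{1,\Omega}$. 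Combining the two gives $\a_f(\u_h,\u_h)\ge\theta_f\norm{\u_h}_{1,\Omega}^2$ for a constant $\theta_f>0$ depending only on $\nu$, the Korn constant, and the Poincar\'e constant of $\Omega$.

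Finally I would add the two bounds and set $\theta^\star=\min(\theta_f,\theta_s)$, which is independent of the mesh sizes because neither the Korn nor the Poincar\'e constant depends on the discretization. The only genuine ingredient is Korn's inequality on $\Huo$; everything else is immediate, so I do not anticipate a real obstacle in this proposition. The substance of the stability analysis lies rather in the companion inf-sup estimate for $\C_h$ established in Proposition~\ref{prop:infsupch}, which together with this ellipticity result verifies Assumption~\ref{assump:infsup_appr} through the theory of~\cite{xu2003some}.
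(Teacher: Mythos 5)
Your proposal proves the proposition only under the additional hypothesis $\beta>0$, which you introduce silently and which neither the statement nor the paper's setting guarantees: in \eqref{eq:forme} the constant $\beta$ may vanish, and $\beta=0$ is in fact the relevant case in the paper's numerical tests, where $\a_s(\X,\Y)=(\grads\X,\grads\Y)_\B$. When $\beta=0$ the form $\a_s$ is \emph{not} coercive on $\Hub$ --- it vanishes on constant fields --- so your key claims that ``the constraint defining the kernel plays no role'' and that ``the quadrature hypotheses are irrelevant'' both fail exactly where the proposition has content. (Your treatment of the fluid block via Korn and Poincar\'e is fine, and for $\beta>0$ your argument coincides with the easy case of the paper's proof.)

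The paper's proof spends its effort on $\beta=0$. There one only gets $\a_f(\u_h,\u_h)+\a_s(\X_h,\X_h)\ge C\norm{\u_h}_{1,\Omega}^2+\kappa\norm{\grads\X_h}_{0,\B}^2$, and the missing term $\norm{\X_h}_{0,\B}$ must be recovered from the kernel condition. Setting $\Xcirc=\abs{\B}^{-1}\int_\B\X_h\,\ds$ and using Poincar\'e for $\X_h-\Xcirc$, it remains to bound $\norm{\Xcirc}_{0,\B}$. Since $(\u_h,\X_h)\in\K_{\C_h}$, testing the constraint with the constant multiplier $\mmu_h=\Xcirc\in\LL_h$ (for which $\c_1$ reduces to $\c_0$, so both couplings are handled at once) gives $\norm{\Xcirc}_{0,\B}^2=\c_{0,h}(\Xcirc,\u_h(\Xbar))$, because $\c(\Xcirc,\X_h-\Xcirc)=0$ by the definition of the mean. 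This term is then estimated by adding and subtracting the exact coupling $\c_0(\Xcirc,\u_h(\Xbar))$: continuity controls the exact part, while the quadrature error estimate of Proposition~\ref{prop:l2_final} --- and hence precisely the exactness hypotheses in the statement --- controls the difference, yielding $\norm{\Xcirc}_{0,\B}\le C\bigl(1+h_\B^{3/2}\abs{\log h_\B^{min}}\bigr)\norm{\u_h}_{1,\Omega}$, which is a mesh-uniform bound and closes the argument. Without this step your proof does not establish the proposition as stated; you should either add the case $\beta=0$ along these lines or state explicitly that you are assuming $\beta>0$.
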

	
	\begin{proof}
		Recalling the definition of the bilinear forms~\eqref{eq:forme}, if $\beta>0$, we simply have that
		\begin{equation}
			\begin{aligned}
				\a_f(\u_h,\u_h) + \a_s(\X_h,\X_h) &\ge C\norm{\u_h}_{1,\Omega}^2 + \beta \norm{\X_h}_{0,\B}^2 + \kappa \norm{\grads\X_h}_{0,\B}^2\\
				&\ge C\norm{\u_h}_{1,\Omega}^2 + \min\{\beta,\kappa\}\norm{\X_h}_{1,\B}^2.
			\end{aligned}
		\end{equation}
		and the property is satisfied. Otherwise, if $\beta=0$, we have that
		\begin{equation}\label{eq:beta_zero}
			\a_f(\u_h,\u_h) + \a_s(\X_h,\X_h) \ge C\norm{\u_h}_{1,\Omega}^2 + \kappa \norm{\grads\X_h}_{0,\B}^2;
		\end{equation}
		hence, in this situation, we have to control the missing term
		$\norm{\X_h}_{0,\B}$. We follow the sketch used in \cite{2021}. In this way,
		we can prove this proposition at once for both choices of $\c_h$.
		
		We introduce the mean of $\X_h$ as
		\begin{equation}\label{eq:mean}
			\Xcirc = \abs{\B}^{-1} \int_\B \X_h\,\ds 
		\end{equation}
		so that, using the Poincar\'{e} inequality, we can write
		\begin{equation}
			\|\X_h-\Xcirc\|_{0,\B} \leq C \norm{\grads\X_h}_{0,\B}
		\end{equation}
		and therefore
		\begin{equation} \label{eq:initial_bound}
			\norm{\X_h}_{0,\B} \le \|\Xcirc\|_{0,\B} + \|\X_h-\Xcirc\|_{0,\B} \le \|\Xcirc\|_{0,\B}+C\norm{\grads\X_h}_{0,\B}.
		\end{equation}
		Exploiting that $(\u_h,\X_h)\in\K_{\C_h}$, we have that
		\begin{equation}\label{eq:c_manip}
			\c(\mmu_h,\Xcirc) = \c_h(\mmu_h,\u_h(\Xbar)) - \c(\mmu_h,\X_h-\Xcirc)\quad\forall\mmu_h\in\LL_h
		\end{equation}
		hence, taking $\mmu_h=\Xcirc$, which is constant, we have 
		\begin{equation}
			\|\Xcirc\|_{0,\B}^2 = \c(\Xcirc,\Xcirc) = \c_{0,h}(\Xcirc,\u_h(\Xbar)) - \c(\Xcirc,\X_h-\Xcirc);
		\end{equation}
		therefore, using the definition of $\Xcirc$ in \eqref{eq:mean}, with some computations we can see that $\c(\Xcirc,\X_h-\Xcirc)=0$, indeed
		\begin{equation}\label{eq:computations}
			\c(\Xcirc,\X_h-\Xcirc) = \Xcirc \bigg( \int_\B \X_h\,\ds - \abs{\B}\Xcirc \bigg) = 0
		\end{equation}
		
		At this point, we need to find a bound for $\c_h(\Xcirc,\u_h(\Xbar))$. Adding and subtracting the same quantity, we can write
		\begin{equation}
			\c_{0,h}(\Xcirc,\u_h(\Xbar)) = \c_{0,h}(\Xcirc,\u_h(\Xbar)) - \c(\Xcirc,\u_h(\Xbar)) + \c(\Xcirc,\u_h(\Xbar));
		\end{equation}
		in particular, for continuity we have
		\begin{equation}
			\c(\Xcirc,\u_h(\Xbar)) \leq \|\Xcirc\|_{0,\B}\|\u_h(\Xbar)\|_{0,\B}
		\end{equation}
		while, applying the result of Proposition~\ref{prop:l2_final}, 
		\begin{equation}
			\c_{0,h}(\Xcirc,\u_h(\Xbar)) - \c(\Xcirc,\u_h(\Xbar)) \leq  C h_\B^{3/2}|\log h_\B^{min}|  \|\Xcirc\|_{0,\B}\|\u_h\|_{1,\Omega}
		\end{equation}
		so that, in combination with \eqref{eq:computations}, we get
		\begin{equation}\label{eq:final_bound_Xcirc}
			\|\Xcirc\|_{0,\B} \leq  C(1+h_\B^{3/2}|\log h_\B^{min}|) \norm{\u_h}_{1,\Omega}.
		\end{equation}
		
		Consequently, putting together \eqref{eq:final_bound_Xcirc} and \eqref{eq:initial_bound}, we have
		
		\begin{equation}
			\norm{\X_h}_{0,\B} \leq  C(1+h_\B^{3/2}|\log h_\B^{min}|) \norm{\u_h}_{1,\Omega} + C\norm{\grads\X_h}_{0,\B}
		\end{equation}
		and therefore, we can conclude the proof by saying that we can find a constant $\theta^\star$ such that
		\begin{equation}\label{eq:elker_a_ch}
			\a_f(\u_h,\u_h) + \a_s(\X_h,\X_h) \ge \theta^\star (\norm{\u_h}_{1,\Omega}^2 + \norm{\X_h}_{1,\B}^2).
		\end{equation}
	\end{proof}
	
	Putting together the results of the previous propositions, we find that the inf-sup condition for~$\A_h$ holds true.
	\begin{prop}
		There exists a positive constant $\theta^\star$ independent of h such that the following inf-sup condition holds true
		\begin{equation}
			\inf_{\U\in\K_{\BB_h}}\sup_{\V\in\K_{\BB_h}} \frac{\A_h(\U,\V)}{\normiii{\U}\normiii{\V}} \ge \theta^\star.
		\end{equation}
	\end{prop}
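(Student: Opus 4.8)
The plan is to observe that, restricted to the kernel $\K_{\BB_h}$, the form $\A_h$ is a generalized saddle-point form of exactly the type covered by the abstract theory of \cite{xu2003some}, and then to feed that theory with the two ingredients already proved, namely the inf-sup condition for $\C_h$ (Proposition~\ref{prop:infsupch}) and the ellipticity on the kernel of $\C_h$ (Proposition~\ref{prop:ellipticity}). First I would note that $\BB$ constrains only the velocity, so $\K_{\BB_h}=\Vdiv\times\Sline_h\times\LL_h$; grouping the primal variables as $(\u_h,\X_h)$ and keeping $\llambda_h$ as multiplier, $\A_h$ decomposes on this space as
\begin{equation*}
\A_h\big((\u_h,\X_h,\llambda_h),(\v_h,\Y_h,\mmu_h)\big)=a\big((\u_h,\X_h),(\v_h,\Y_h)\big)+b\big((\v_h,\Y_h),\llambda_h\big)-b\big((\u_h,\X_h),\mmu_h\big),
\end{equation*}
with $a((\u_h,\X_h),(\v_h,\Y_h))=\a_f(\u_h,\v_h)+\a_s(\X_h,\Y_h)$ and $b((\v_h,\Y_h),\mmu_h)=\c_h(\mmu_h,\v_h(\Xbar))-\c(\mmu_h,\Y_h)$. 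Since the quadrature is exact on the solid-solid terms, $\c_h(\mmu_h,\Y_h)=\c(\mmu_h,\Y_h)$, so $b((\v_h,\Y_h),\mmu_h)=\c_h(\mmu_h,\v_h(\Xbar)-\Y_h)$ is precisely the quantity controlled by Proposition~\ref{prop:infsupch}.

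The mechanism I would rely on is the following. Testing $\A_h$ against the diagonal argument $(\u_h,\X_h,\llambda_h)$ cancels the two $b$-terms and leaves $a((\u_h,\X_h),(\u_h,\X_h))$, which is bounded below by the kernel coercivity of Proposition~\ref{prop:ellipticity}; meanwhile the test function $(\v_h,\Y_h)\in\Vdiv\times\Sline_h$ furnished by the inf-sup of Proposition~\ref{prop:infsupch} supplies control of $\norm{\llambda_h}_{\LL}$ through $b(\cdot,\llambda_h)$. The abstract result of \cite{xu2003some} assembles these two estimates into a single admissible test function $\V_h\in\K_{\BB_h}$, the inf-sup test function entering with a small weight chosen to absorb the cross contributions $a((\u_h,\X_h),\cdot)$ against the coercivity gain. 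As this is a purely algebraic combination, identical in structure to the exact-integration argument of \cite{stat}, I would invoke the theorem directly rather than reconstruct the weighted test function explicitly.

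The step that requires genuine attention is checking that all constants entering $\theta^\star$ are independent of the mesh sizes. The ellipticity constant of Proposition~\ref{prop:ellipticity} and the inf-sup constant $\beta_0$ of Proposition~\ref{prop:infsupch} are already $h$-uniform, so the remaining task is the uniform continuity of $a$ and $b$. Continuity of $a$ is immediate from the boundedness of $\a_f$ and $\a_s$; for $b$ the only delicate factor is $\c_h$, whose continuity follows by writing $\c_h=\c+(\c_h-\c)$ and bounding the perturbation by $C\rho_\ell(h)\norm{\mmu_h}_{\ell,\B}\norm{\v_h}_{1,\Omega}$ with $\rho_\ell(h)\to0$ (Propositions~\ref{prop:l2_final} and~\ref{prop:h1_final}), which shows $\c_h$ is uniformly bounded on $\LL_h\times\Vline_h$. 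Hence the main obstacle is not any individual inequality but ensuring this uniformity throughout the combination; once secured, the inf-sup bound for $\A_h$ follows at once.
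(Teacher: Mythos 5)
Your proposal is correct and takes essentially the same route as the paper: the paper's entire proof of this proposition consists of combining Proposition~\ref{prop:infsupch} (inf-sup for $\C_h$, using that quadrature exactness on solid--solid terms makes $\c_h$ and $\c$ coincide there) with Proposition~\ref{prop:ellipticity} (ellipticity of $\a_f+\a_s$ on $\K_{\C_h}$) through the abstract saddle-point theory of \cite{xu2003some} and \cite{stat}, exactly as you do. Your additional verification of the $h$-uniform continuity of $a$ and $b$ via the quadrature error estimates is a detail the paper leaves implicit; it sharpens, rather than departs from, the paper's argument.
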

	
	\section{Numerical tests}\label{sec:numerical_tests}
	
	In this last section we present \DB three \BD numerical tests with the aim of assessing the quadrature error estimates discussed in Section~\ref{sec:quad_error}. \DB We then present a numerical test carried out by using quadratic finite elements\BD. The rate of the quadrature error is analyzed by measuring the difference between exact and approximate interface matrix and by comparing the convergence history of the numerical solutions. We consider the following particular case of Problem~\ref{pro:stationary_general}, with $\Omega=[-2,2]^2$ and $\B=[0,1]^2$.
	
	\begin{pro}
		\label{pro:stationary_test}
		Let $\Xbar\in\Winftyd$ be invertible with Lipschitz inverse. Given $\f\in \LdOd$, $\g\in \LdBd$ and $\d\in\LdBd$, find $(\u,p)\in\Huo \times \Ldo$, $\X\in \Hub$ and $\llambda\in \LL$, such that
		\begin{subequations}
			\begin{align}
				&(\Grad\u,\Grad\v)_\Omega-(\div\v,p)_\Omega+\c(\llambda,\v(\Xbar))=(\f,\v)_\Omega && 
				\forall\v\in\Huo\\
				&(\div\u,q)_\Omega=0&&\forall q\in \Ldo\\
				& (\grads\X,\grads\Y)_\B-\c(\llambda,\Y)=(\g,\Y)_\B&&\forall\Y\in \Hub \\
				& \c(\mmu,\u(\Xbar)-\X)=\c(\mmu,\d)&&\forall\mmu\in \LL
			\end{align}
		\end{subequations}
	\end{pro}
	
	We set the initial mapping $\Xbar$ to be
	\begin{equation*}
		\Xbar(\s) = (-0.62+2 s_1,-0.62+2 s_2),\qquad\s=(s_1,s_2),
	\end{equation*}
	so that the actual configuration of the immersed structure coincides with the square $\Omega_s=[-0.62,1.38]^2$. 
	
	For the finite element discretization of the above problem, we partition the domain $\Omega$ with a right-oriented uniform triangulation $\T_h^\Omega$, whereas a left-oriented uniform triangulation $\T_h^\B$ is chosen for the solid reference domain $\B$. An example of this configuration with coarse meshes is depicted in Figure~\ref{fig:test_configuration}. 
	
	\begin{figure}
		\subfloat[]{\includegraphics[trim=60 16 40 50,width=0.3\linewidth]{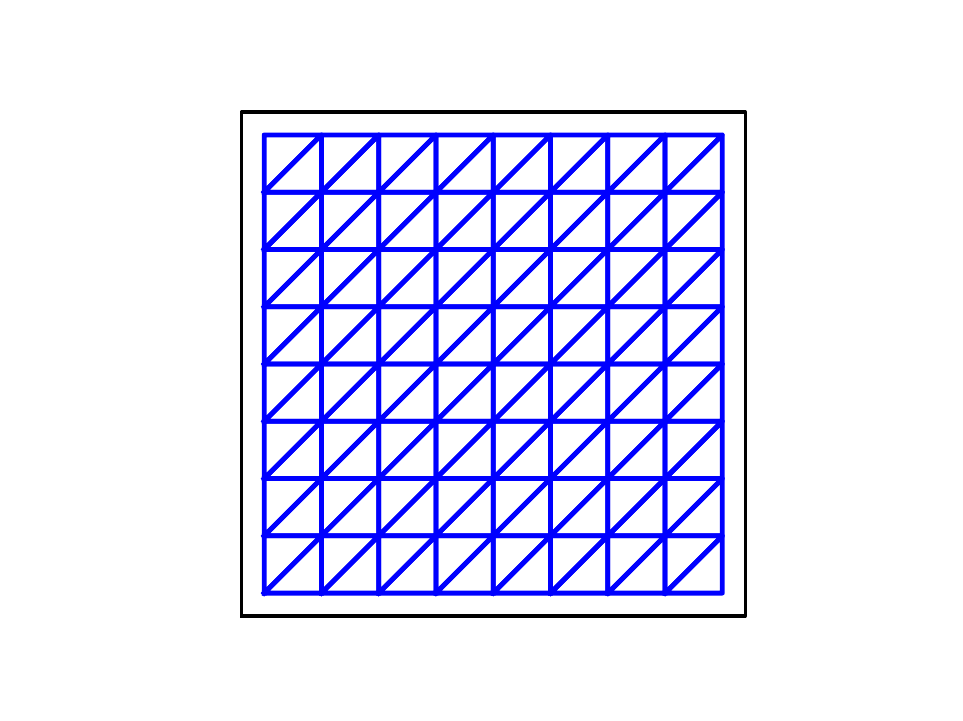}}
		\subfloat[]{\includegraphics[trim=60 16 40 50,width=0.3\linewidth]{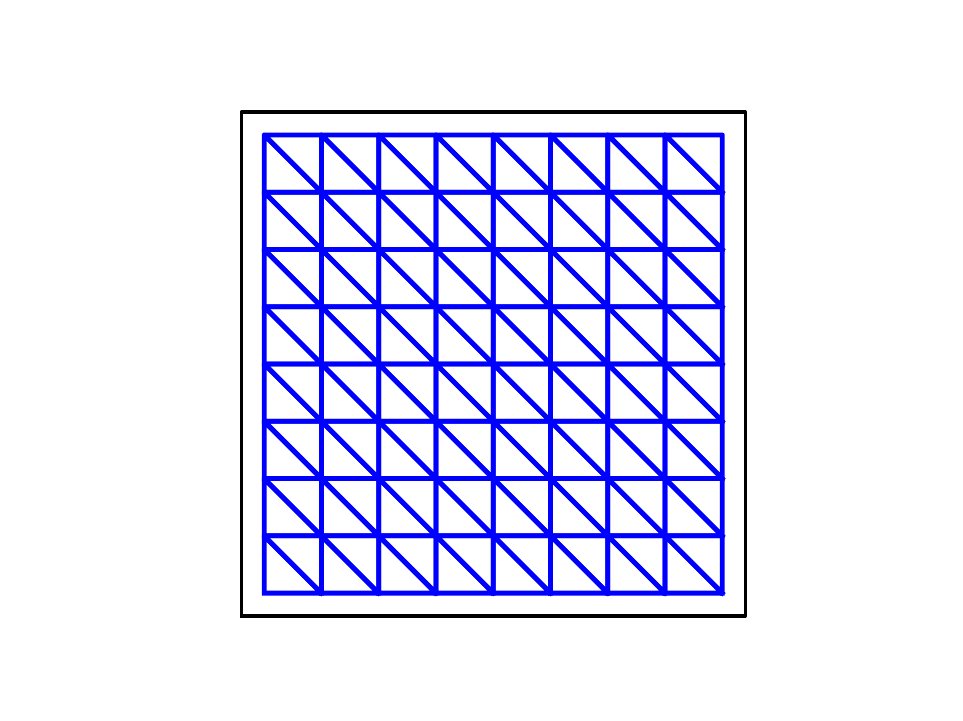}}
		\subfloat[]{\includegraphics[trim=60 16 40 50,width=0.3\linewidth]{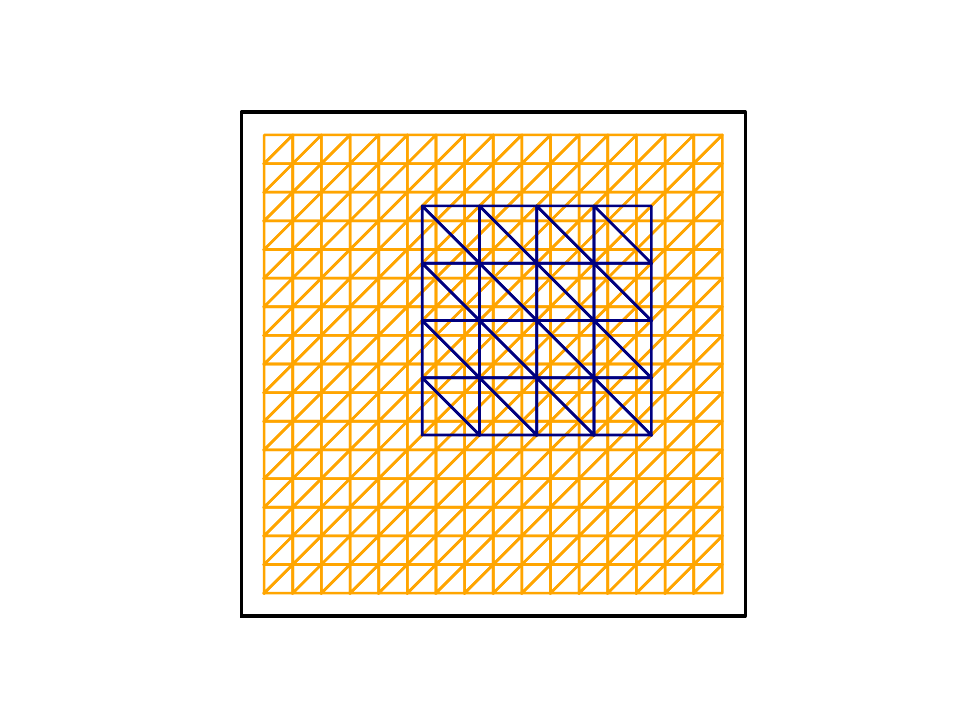}}
		\caption{Example of meshes used for the discretization of Problem~\ref{pro:stationary_test}. From left to right: a right-oriented uniform mesh (fluid), a left-oriented uniform mesh (solid) and the geometric configuration of the problem (fluid mesh in orange, mapped solid mesh in dark blue).}
		\label{fig:test_configuration}
	\end{figure}
	
	The coupling term is integrated (both in the exact and approximate case) with a precise enough quadrature rule. Moreover, in the case of $\c=\c_0$, the convergence of the Lagrange multiplier $\llambda$ is studied by considering the norm of the dual space $\LL_0=\Hubd$ we used at continuous level: this dual norm is computed by solving the associated equation ${-\Delta\Psi+\Psi=\llambda}$ with homogeneous Neumann boundary conditions.
	
	\begin{figure}
		\subfloat[\label{fig:quad_error_decay1}]{\includegraphics[width=0.45\textwidth]{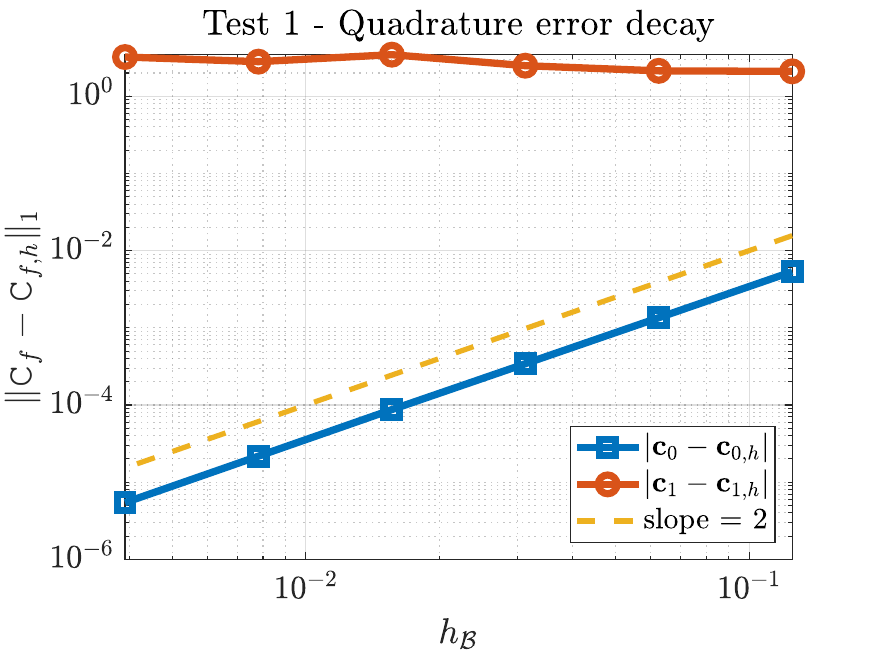}}\qquad
		\subfloat[\label{fig:quad_error_decay2}]{\includegraphics[width=0.45\textwidth]{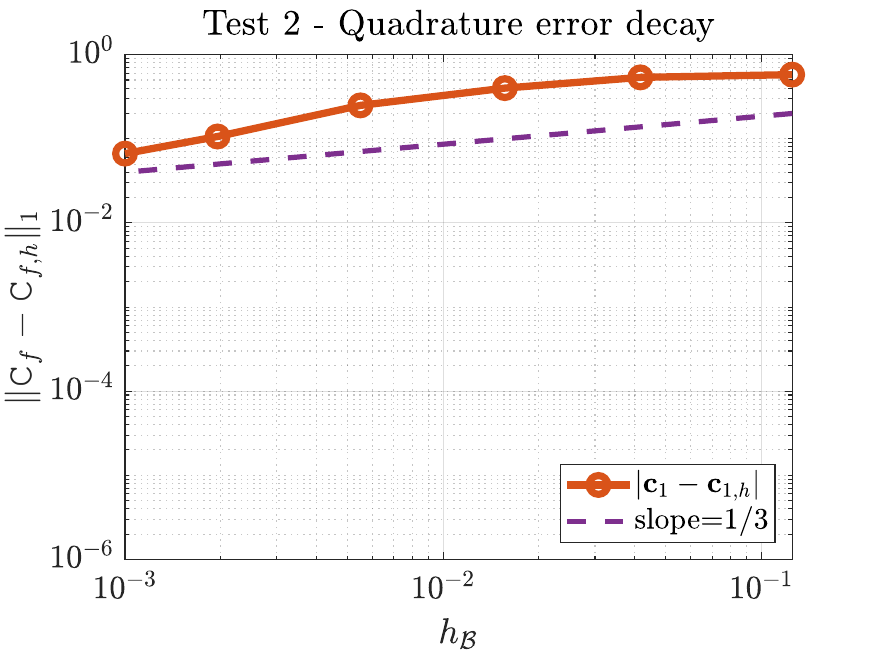}}
		\caption{Decay of the quadrature error committed when the coupling term is assembled with the approximate procedure. The quadrature error is measured by computing the 1--norm of the difference $\Cmatr_f - \Cmatr_{f,h}$. The results agree with the theoretical estimates presented in Section~\ref{sec:quad_error}. More precisely, the $\LdBd$ coupling (blue line) \lg converges with rate 2 \gl in Test~1 (${h_\B\rightarrow0}$, $h_\B/h_\Omega=1/2$). On the other hand, the error related to the $\Hub$ coupling (orange line) decays only in Test~2 ($h_\B\rightarrow0$, $h_\B/h_\Omega=\frac12h_\B^{1/3}\rightarrow0$) with rate $1/3$, as expected.} 
		\label{fig:quad_error_decay}
	\end{figure}
	
	\subsubsection*{Test 1}
	
	\DB
	We solve Problem~\ref{pro:stationary_test} by choosing the the right hand side $\f,\,\g,\,\d$ in such a way that we obtain an approximation of the following exact solution
	\begin{equation*}
		\begin{aligned}
			&\u(x,y) = \curl\big((4-x^2)^2(4-y^2)^2\big) &&\text{ in }\Omega\\
			&p(x,y) = 150\sin(x)&&\text{ in }\Omega\\
			&\X(s_1,s_2) = \curl\big((4-s_1^2)^2(4-s_2^2)^2\big)&&\text{ in }\B\\
			&\llambda(s_1,s_2) = \left( e^{s_1}, e^{s_2}\right)&&\text{ in }\B.
		\end{aligned}
	\end{equation*}
	The fluid variables are  discretized by the Bercovier--Pironneau element, while conforming piecewise linear elements are considered for the solid variables. As a consequence of using linear elements for approximating the velocity, we have that this is a low-order Stokes pair: indeed, both variables have optimal convergence rate equal to one. However, it has been observed numerically that the pressure usually superconverges, with rate equal to 3/2 \cite{mass}. The coupling term is assembled by considering a quadrature rule exact for quadratic polynomials.
	\BD
	
	For the initial configuration of this test, we choose a $16\times 16$ triangulation for the pressure mesh and a $8\times8$ mesh for $\B$. Then, we refine both meshes five times in such a way that $h_\B\rightarrow0$, while the ratio $h_\B/h_\Omega$ is kept constant. In particular, we have $h_\B/h_\Omega=1/2$.
	
	In Figure~\ref{fig:quad_error_decay1}, we plot the behavior of the
	quadrature error we commit on the coupling term by measuring the difference
	between $\Cmatr_f - \Cmatr_{f,h}$ with the 1--norm for matrices. We can see
	that the behavior of the error reflects perfectly our theoretical estimates:
	\lg the $\LdBd$ coupling shows a second order rate\gl, whereas the $\Hub$ coupling shows poor performance since $h_\B/h_\Omega$ is kept constant.
	
	As expected, the convergence of the solution is also influenced by the assembly technique chosen for the interface matrix. Convergence plots are collected in  Figure~\ref{fig:test1_results}: results for $\c=\c_0$ are reported on the left column, while the right column is related to $\c=\c_1$. The $\LdBd$ coupling term produces optimal results also when assembled in approximate way. On the other hand, only the exact computation of the interface matrix $\Cmatr_f$ produces an optimal method when the $\Hub$ scalar product is considered.
	
	\subsubsection*{Test 2}
	
	\DB Within the same framework of Test~1, we choose a different mesh refinement strategy. \BD In this case, for the initial configuration we have a $8\times8$ mesh for both pressure and solid domain $\B$. We then refine both meshes six times in such a way that $h_\B\rightarrow0$, but also $h_\B/h_\Omega\rightarrow0$. In particular, we choose $h_\B$ such that $h_\B=(\frac12h_\Omega)^{3/2}$, which implies $h_\B/h_\Omega=\frac12h_\B^{1/3}$.
	
	The behavior of $\Cmatr_f - \Cmatr_{f,h}$, depicted in
	Figure~\ref{fig:quad_error_decay2}, agrees with the theoretical estimates: the
	error originated from $\c_{1,h}$ converges with rate $1/3$ with respect to $h_\B$, which is consistent with our choice of meshes.
	
	The convergence history of the solution is reported in Figure~\ref{fig:test2_results}. Let us notice that the method with approximate $\Hub$ coupling, despite the presence of some oscillations, has overall behavior consistent with the exact case and theoretical results. Let us also point out that the suboptimal rate of convergence for the solid variables is $1/3$, which is caused by our choice of meshes.
	
	\begin{figure}
		\centering
		\textbf{Test 1: $h_\B\rightarrow0$\\$\LdBd$ coupling \textit{vs} $\Hub$ coupling }\\
		\vspace{3mm}
		\includegraphics[width=0.4\linewidth]{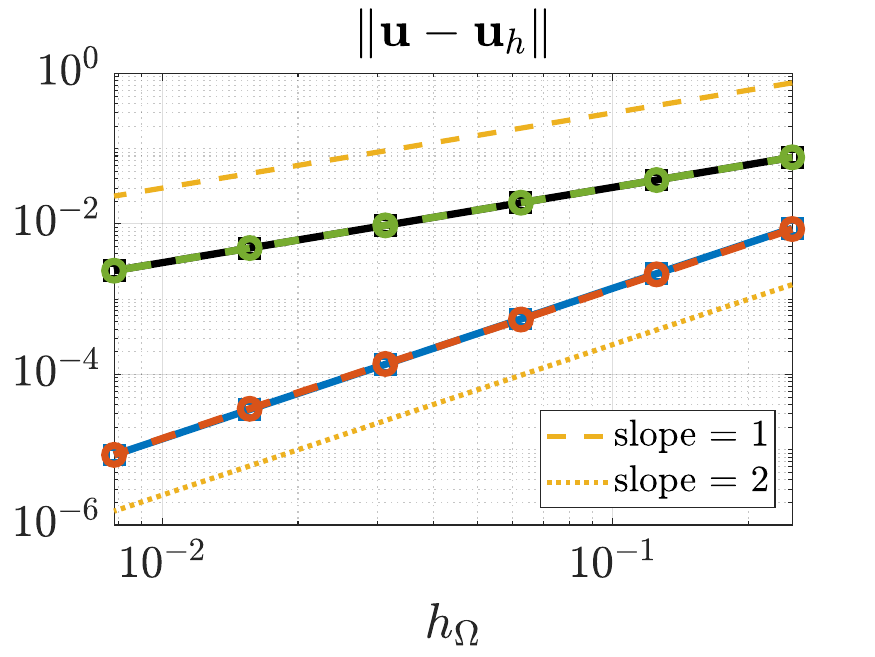}\qquad
		\includegraphics[width=0.4\linewidth]{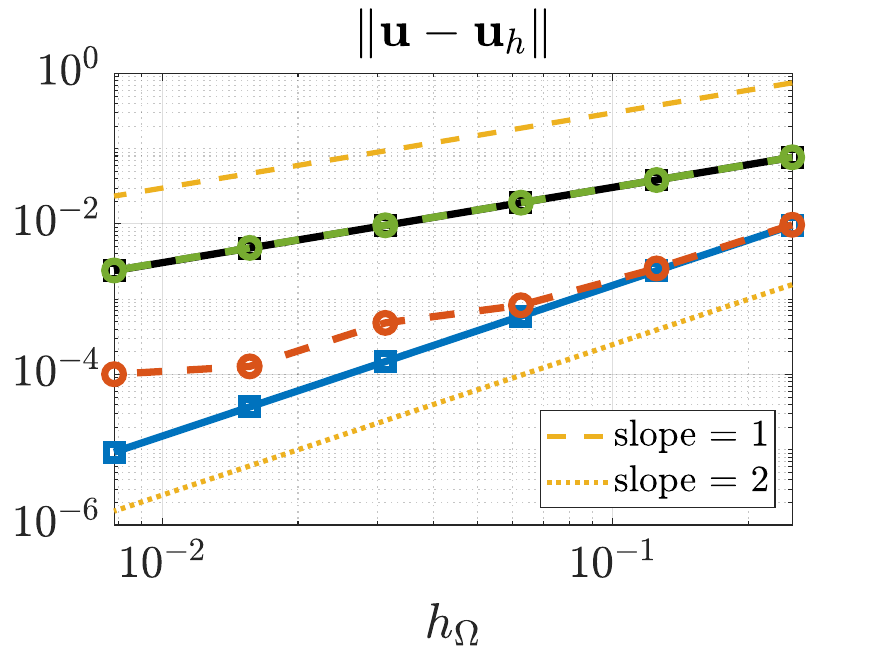}\\\medskip
		\includegraphics[width=0.4\linewidth]{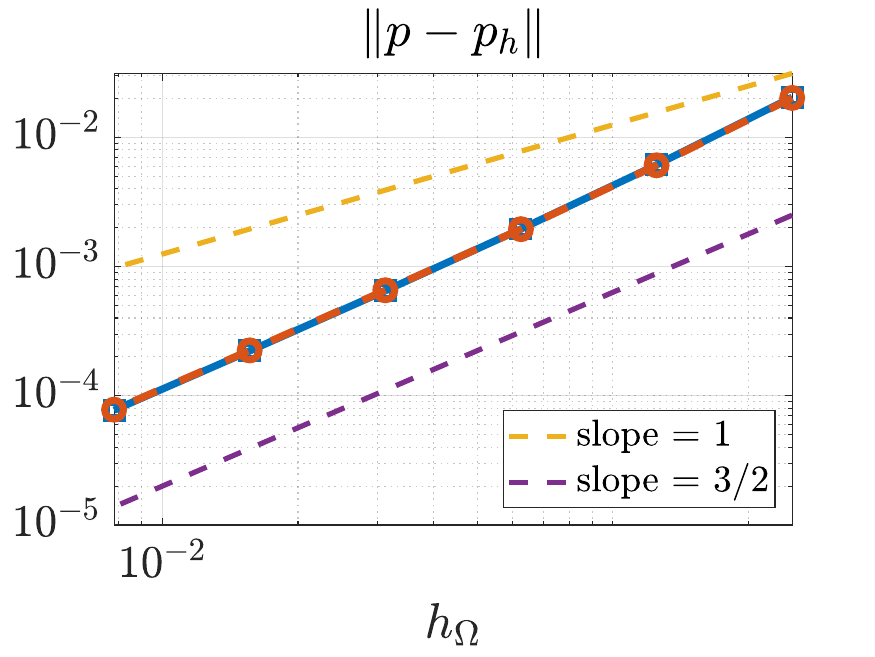}\qquad
		\includegraphics[width=0.4\linewidth]{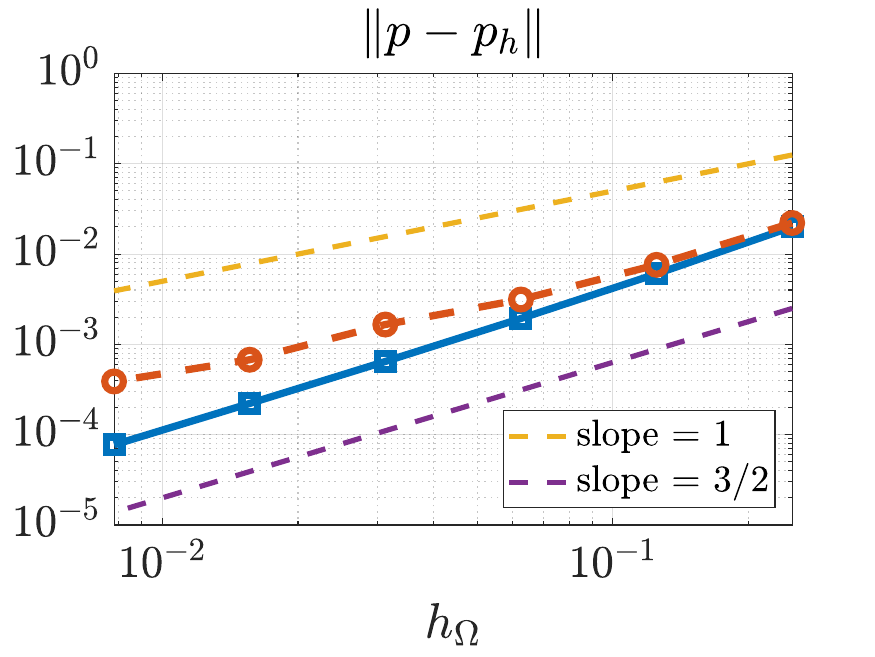}\\\medskip
		\includegraphics[width=0.4\linewidth]{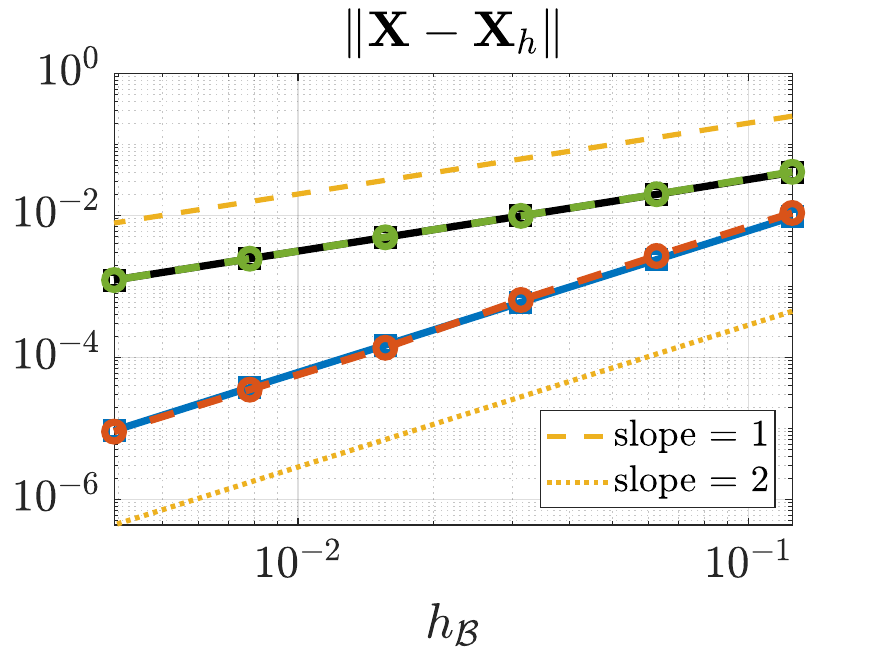}\qquad
		\includegraphics[width=0.4\linewidth]{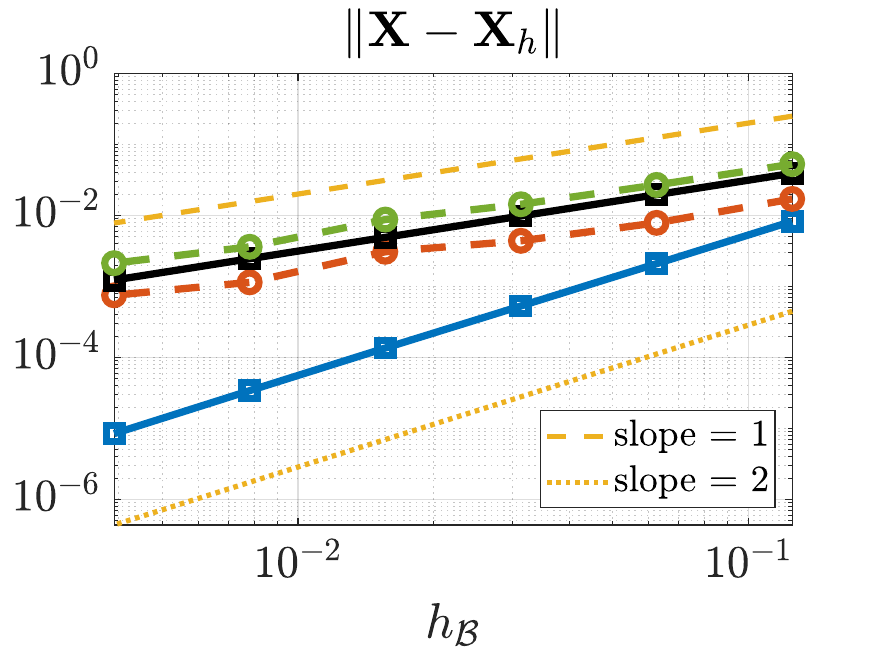}\\\medskip
		\includegraphics[width=0.4\linewidth]{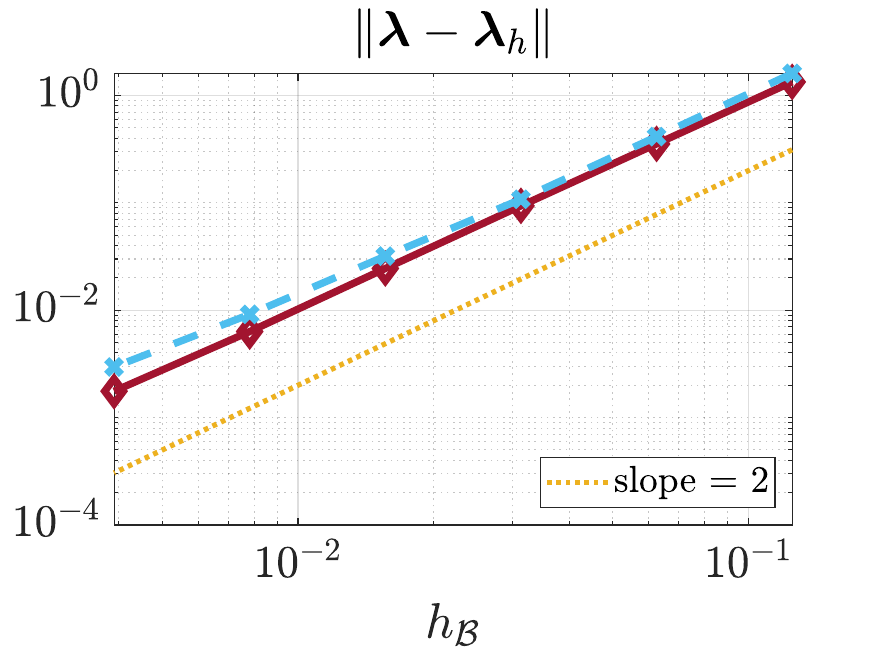}\qquad
		\includegraphics[width=0.4\linewidth]{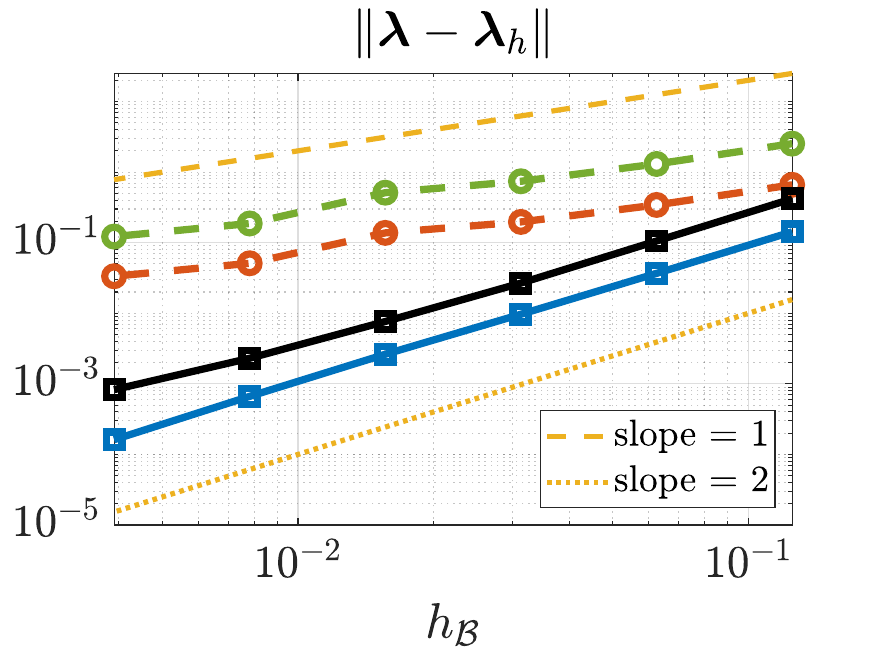}\\\medskip
		\includegraphics[width=1\linewidth]{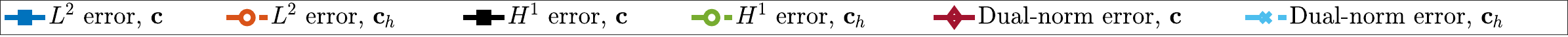}\\\medskip
		\caption{Convergence history of $\u,p,\X,\llambda$ in Test~1: comparison between exact and approximate assembly of the interface matrix. The results obtained with $\c=\c_0$ are collected in the left column, while the right column is related to $\c=\c_1$.}
		\label{fig:test1_results}
	\end{figure}
	
	\begin{figure}
		\centering
		\textbf{Test 2: $h_\B\rightarrow0$, $h_\B/h_\Omega\rightarrow0$\\$\Hub$ coupling }\\
		\vspace{3mm}
		
		\includegraphics[width=0.4\linewidth]{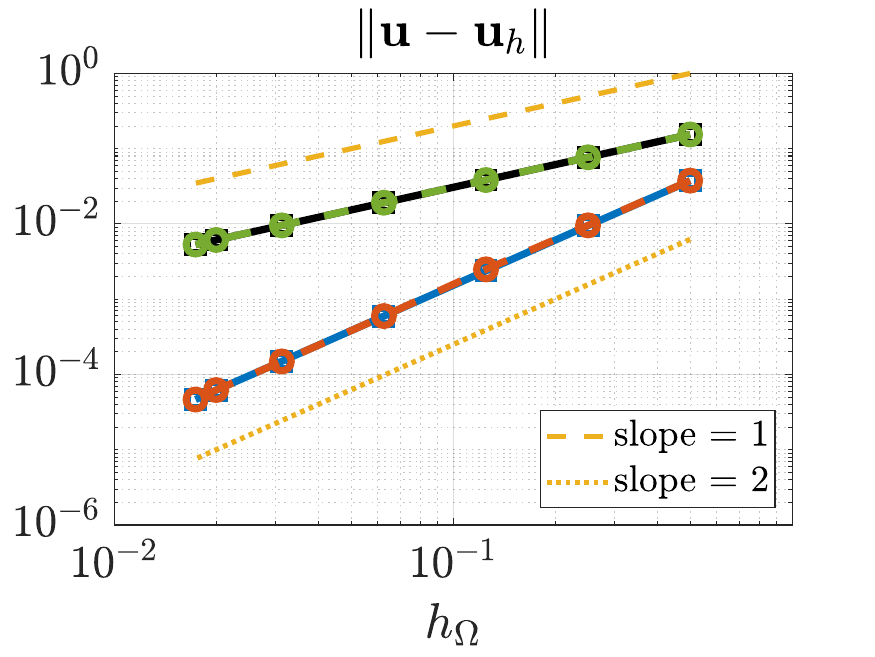}
		\includegraphics[width=0.4\linewidth]{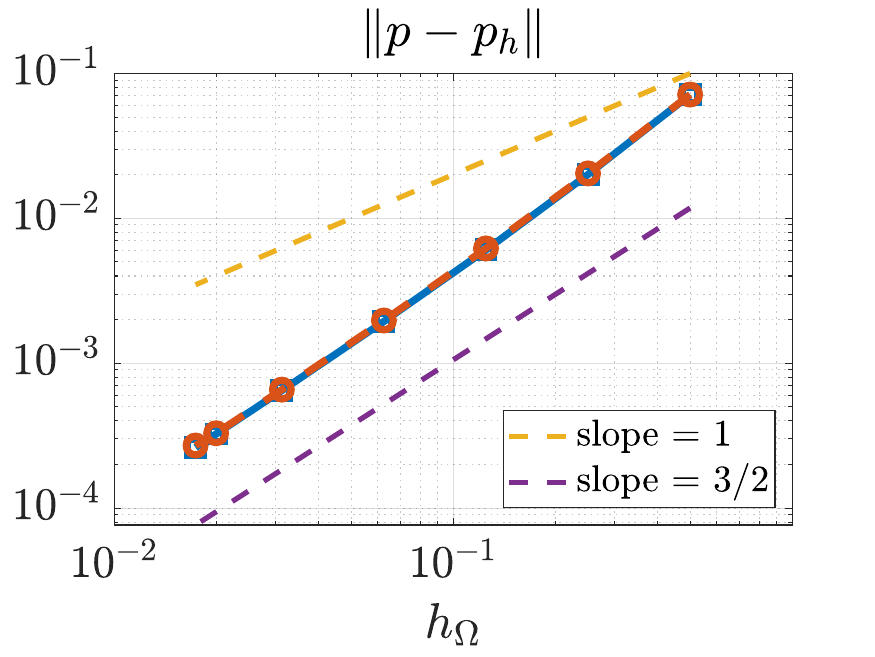}\\\medskip
		\includegraphics[width=0.4\linewidth]{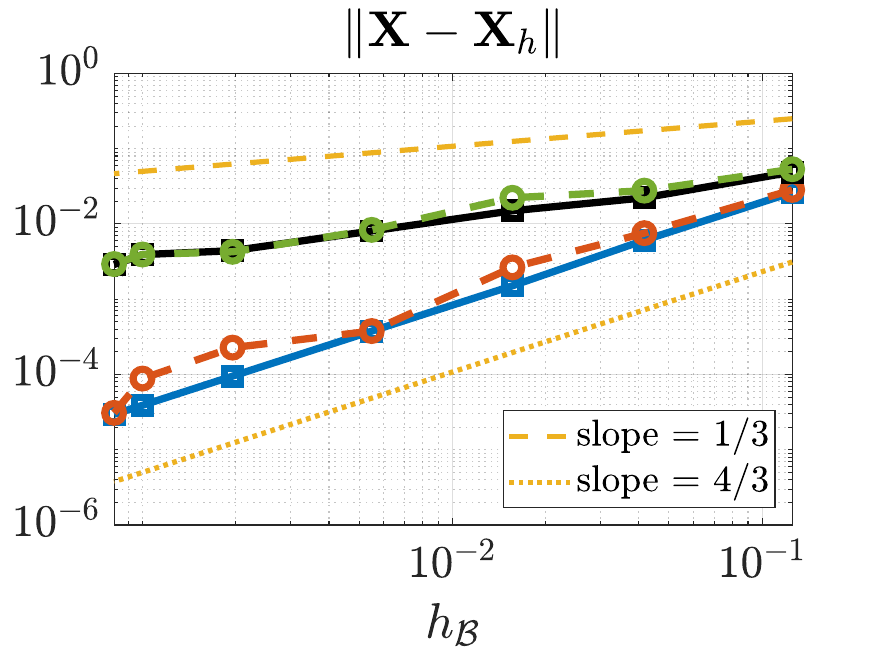}
		\includegraphics[width=0.4\linewidth]{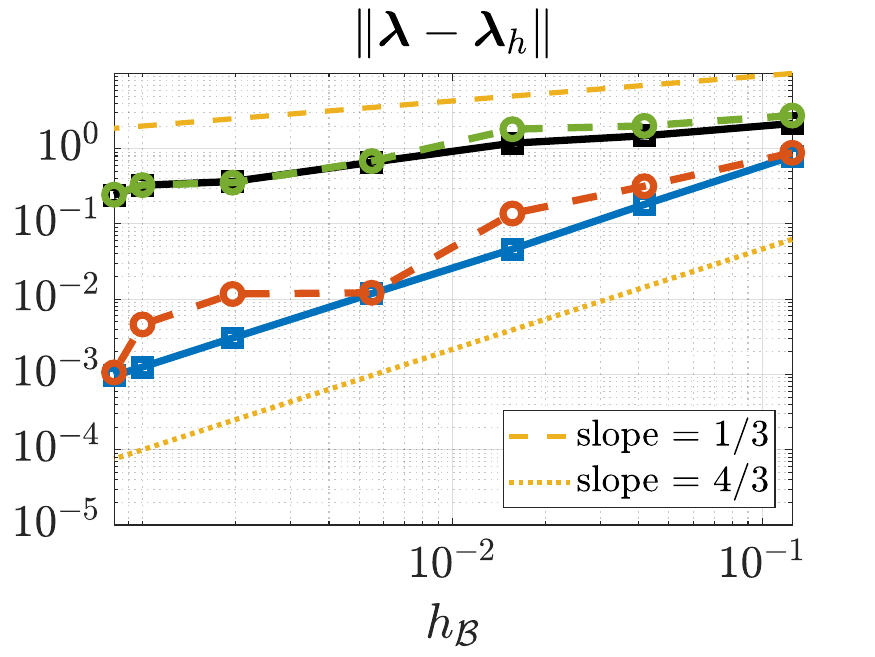}\\\medskip
		\includegraphics[width=0.6\linewidth]{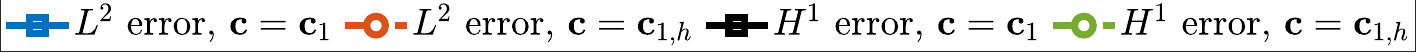}\\\medskip
		\caption{Convergence history of $\u,p,\X,\llambda$ in Test~2: comparison between exact and approximate assembly of the interface matrix for $\c=\c_1$.}
		\label{fig:test2_results}
	\end{figure}
	
	\DB
	\subsubsection*{Test 3}
	Within the same framework of Test~1, we choose the right hand side so that we obtain an approximation of the following exact solution
	\begin{equation*}
		\begin{aligned}
			&\u(x,y) = \begin{cases*}
				\u_f \text{ in }\Of\\
				\u_s \text{ in }\Omega\setminus\Of
			\end{cases*}\\
			&p(x,y) = \sin(x)&&\text{ in }\Omega\\
			&\X(s_1,s_2) = \big(-s_1\sin(s_1s_2),\,s_2\sin(s_1s_2)\big)&&\text{ in }\B\\
			&\llambda(s_1,s_2) = \left( e^{s_1},\,e^{s_2}\right)&&\text{ in }\B,
		\end{aligned}
	\end{equation*}
	where 
	\begin{equation*}
		\begin{aligned}
			&\u_f(x,y) = 10^{-3}\big((x^4 - 2x^3 + x^2)(4y^3 - 6y^2 + 2y),\,-(y^4 - 2y^3 + y^2)(4x^3 - 6x^2 + 2x)\big)&&\\
			&\u_s(x,y) = \u_f(x,y) + \curl\boldsymbol{\Phi}&&\\
			&\boldsymbol{\Phi}(x,y) = 50 (x+0.62)^2 (x-1.38)^2 (y+0.62)^2 (y-1.38)^2.
		\end{aligned}
	\end{equation*}
	We point out that the gradient of the velocity $\u$ jumps along the interface $\partial\Os$, therefore the theoretical rate of convergence for $\u$ is $1/2$ in $H^1$ norm. Convergence plots are reported in Figure~\ref{fig:test3_results}, where the left column is referred to $\c=\c_0$, while the right column to $\c=\c_1$. The $\LdBd$ coupling term provides optimal results when assembled exactly, while it is affected by the quadrature error when computed by means of the inexact procedure. The $\Hub$ coupling term provides optimal results only when computed exactly.
	
	\begin{figure}
		\centering
		\DB
		\textbf{Test 3: $h_\B\rightarrow0$\\$\LdBd$ coupling \textit{vs} $\Hub$ coupling }\\
		\vspace{3mm}\BD
		
		\includegraphics[width=0.4\linewidth]{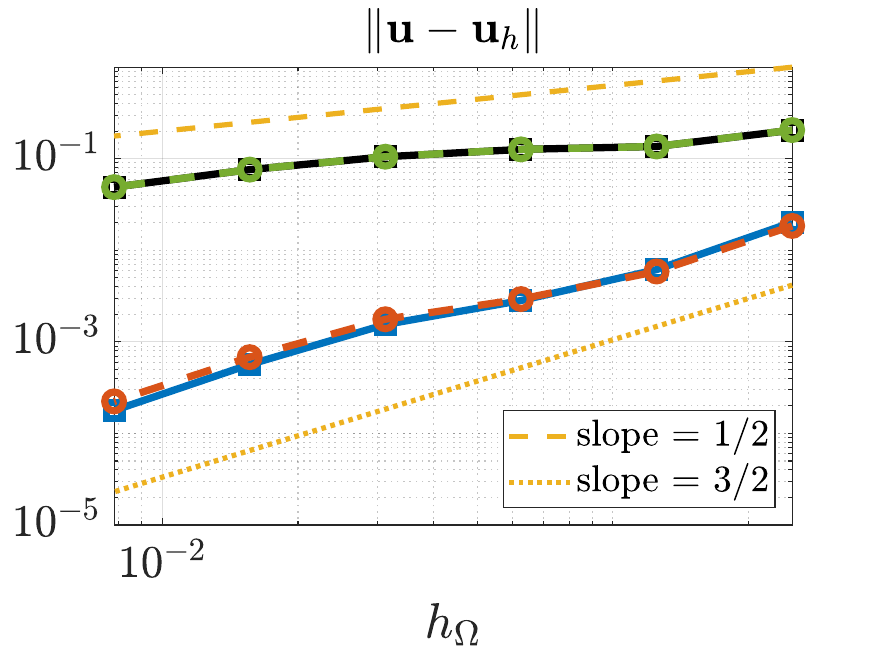}\qquad
		\includegraphics[width=0.4\linewidth]{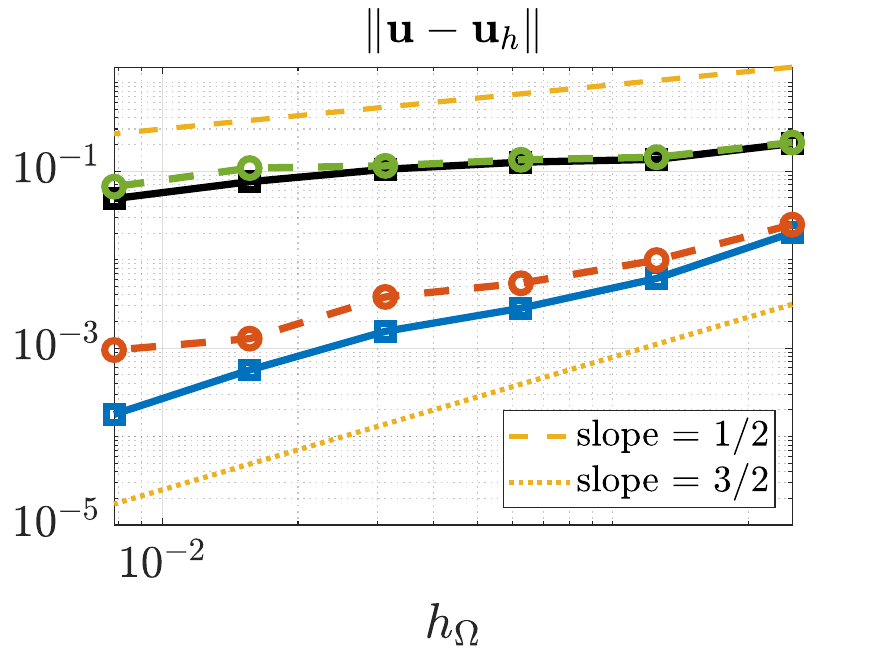}\\\medskip
		\includegraphics[width=0.4\linewidth]{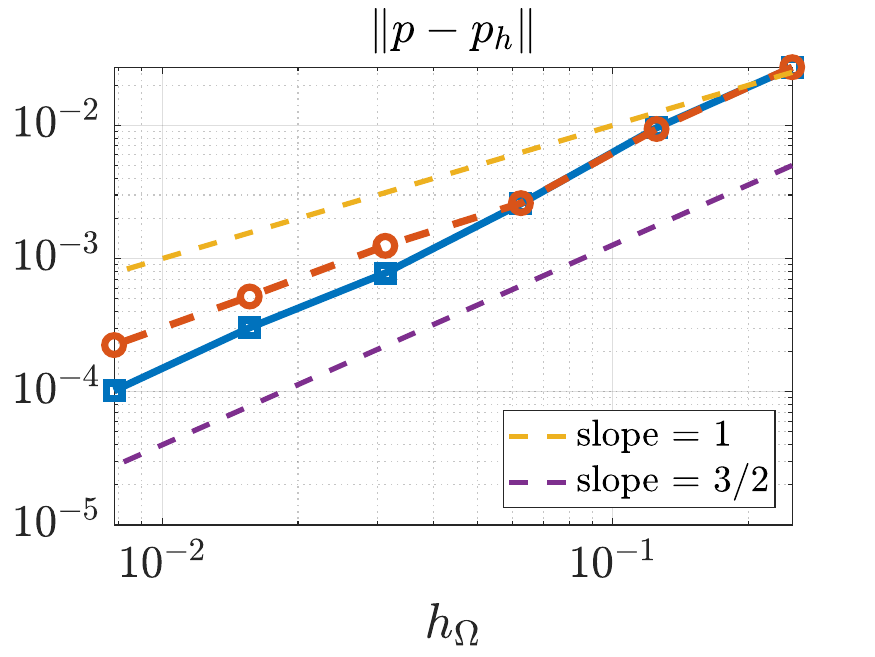}\qquad
		\includegraphics[width=0.4\linewidth]{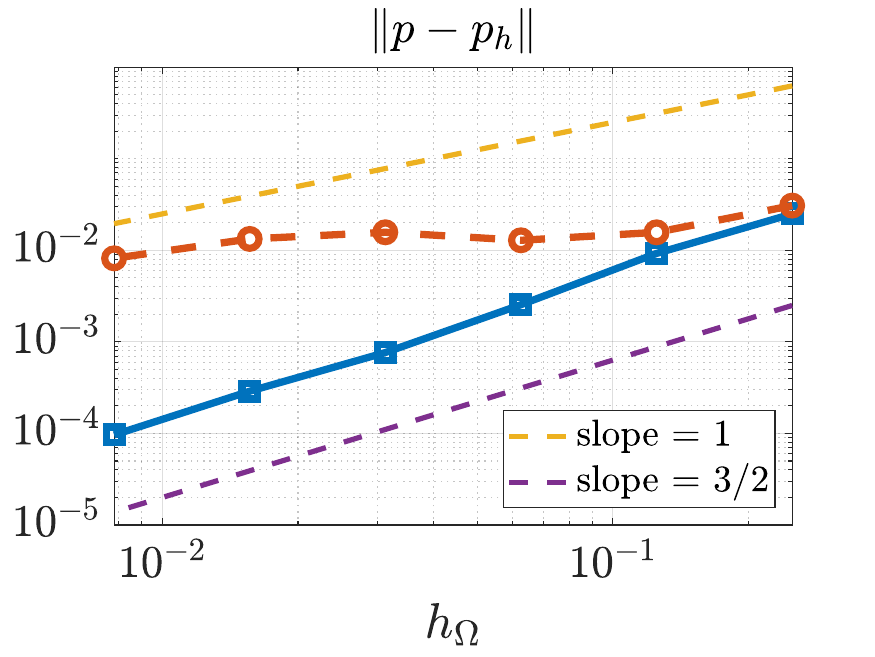}\\\medskip
		\includegraphics[width=0.4\linewidth]{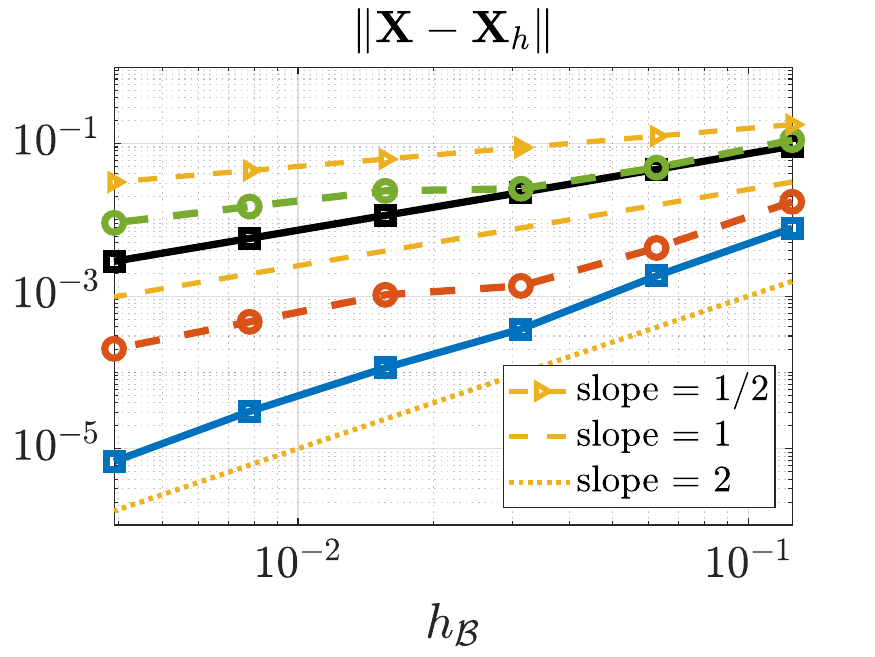}\qquad
		\includegraphics[width=0.4\linewidth]{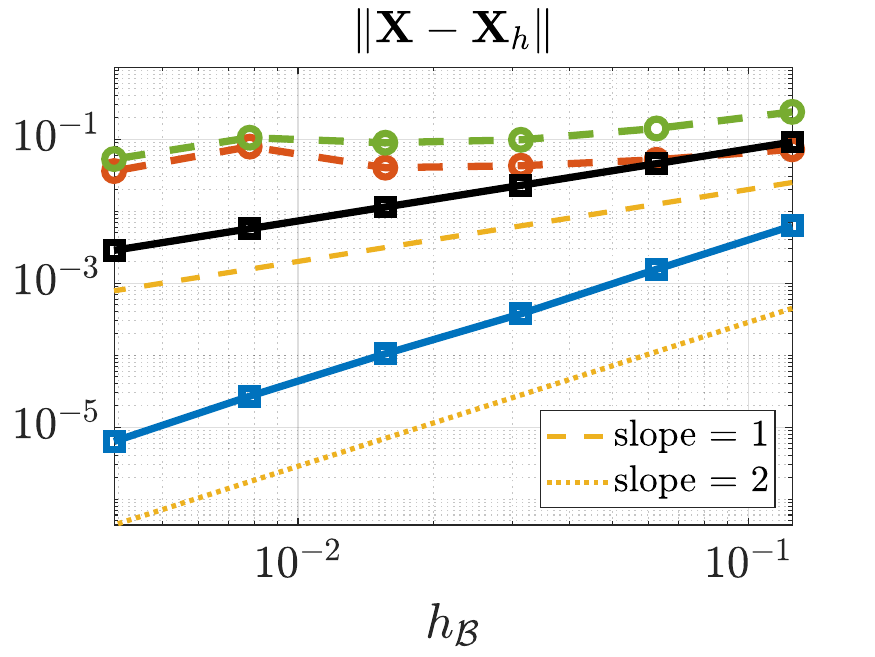}\\\medskip
		\includegraphics[width=0.4\linewidth]{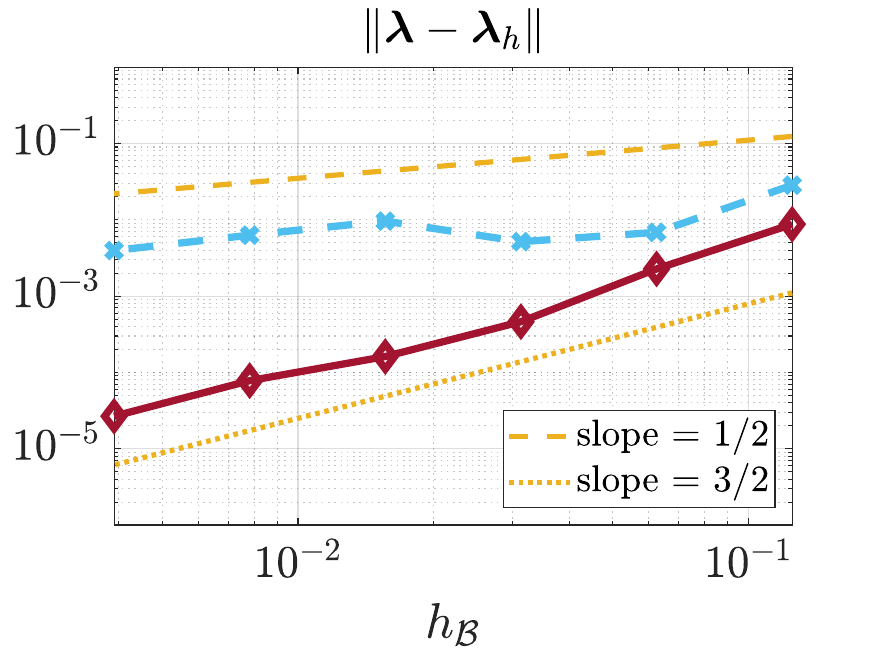}\qquad
		\includegraphics[width=0.4\linewidth]{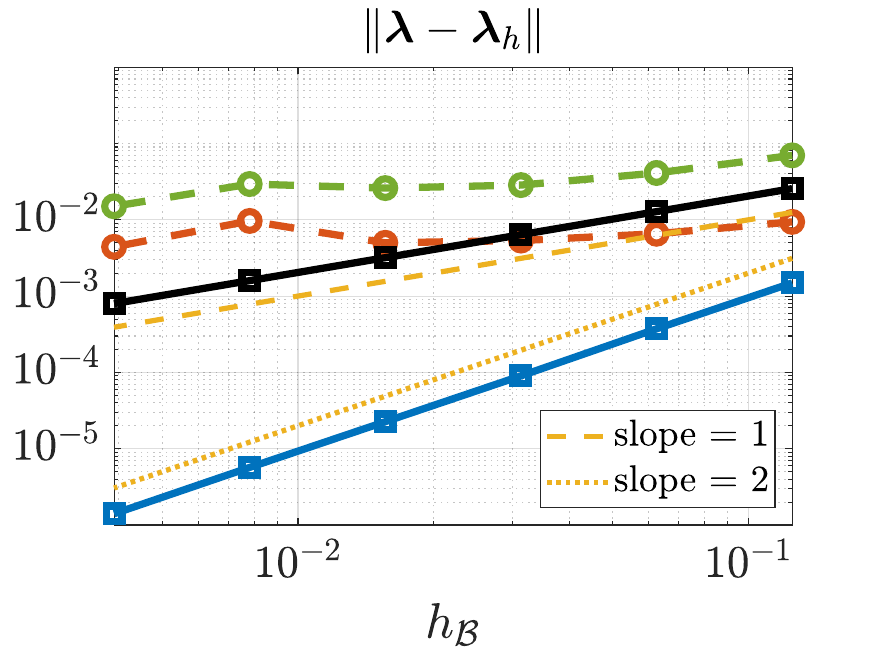}\\\medskip
		\includegraphics[width=1\linewidth]{figures/legend}\\\medskip
		\caption{\DB Convergence history of $\u,p,\X,\llambda$ in Test~3: comparison between exact and approximate assembly of the interface matrix. The results obtained with $\c=\c_0$ are collected in the left column, while the right column is related to $\c=\c_1$.\BD}
		\label{fig:test3_results}
	\end{figure}
	
	\subsubsection*{Test 4}
	We repeat Test~1 by considering higher order finite elements, even if this case is not covered by our theory. More precisely, we discretize the fluid unknowns by the Hood--Taylor $\Pcal_2/\Pcal_1$ finite element, while the solid unknowns are discretized by conforming piecewise quadratic elements. Hence, all variables have theoretical convergence rate equal to two. The coupling term is assembled by choosing a quadrature rule which is exact for polynomials of degree four. Convergence plots are collected in Figure~\ref{fig:test4_results}: the left column is related to $\c=\c_0$, while the right column refers to $\c=\c_1$. As already observed in the case of linear finite elements, the $\LdBd$ coupling term gives optimal convergence rates even when assembled by the inexact procedure. We observe that the error on $\llambda$ converges with rate $1/2$ when the inexact quadrature rule is employed. On the other hand, the $\Hub$ coupling term provides optimal results only when computed exactly.
	\BD

	\begin{figure}
		\centering
		\DB
		\textbf{Test 4: $h_\B\rightarrow0$\\$\LdBd$ coupling \textit{vs} $\Hub$ coupling }\\
		\vspace{3mm}\BD
		\includegraphics[width=0.4\linewidth]{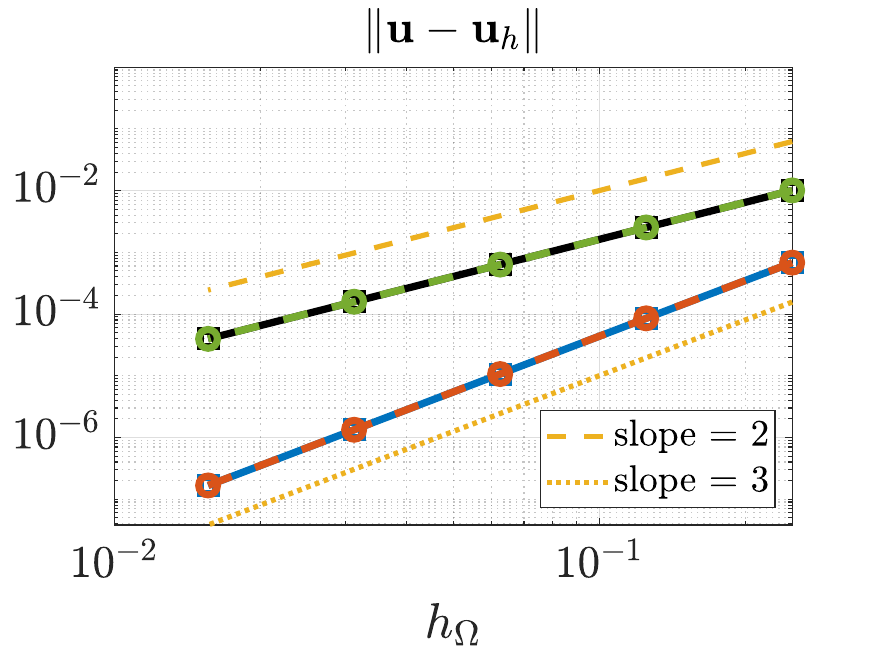}\qquad
		\includegraphics[width=0.4\linewidth]{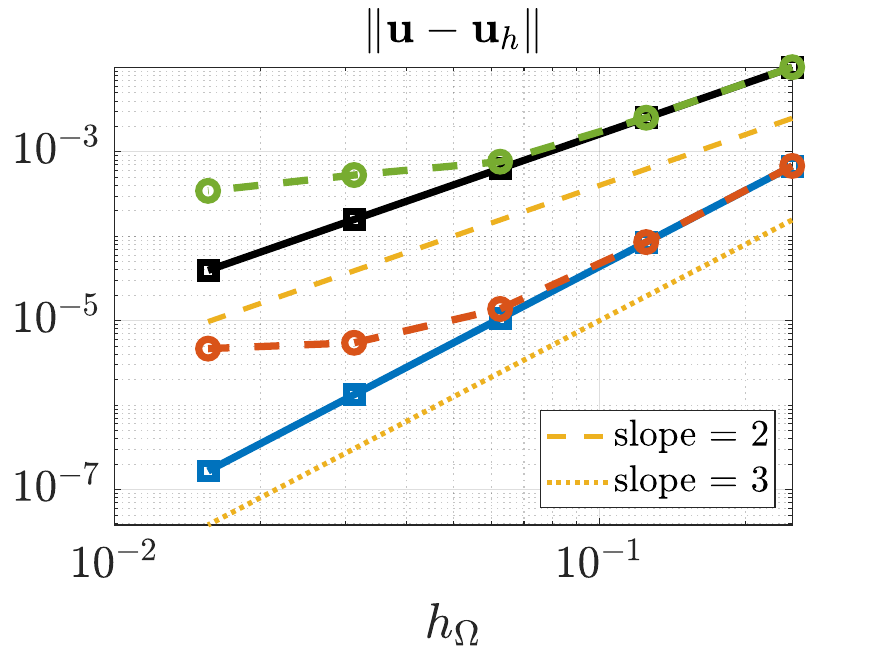}\\\medskip
		\includegraphics[width=0.4\linewidth]{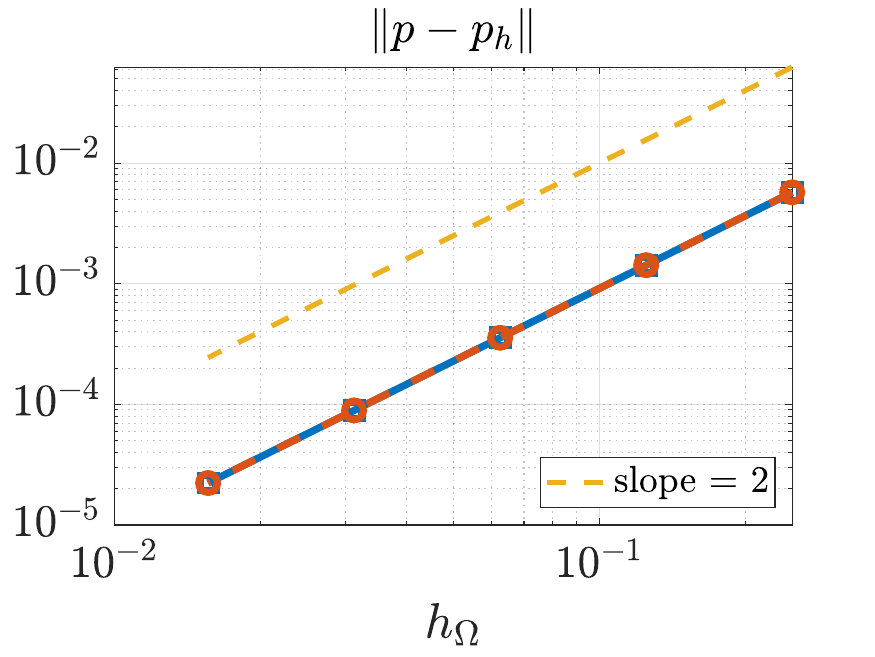}\qquad
		\includegraphics[width=0.4\linewidth]{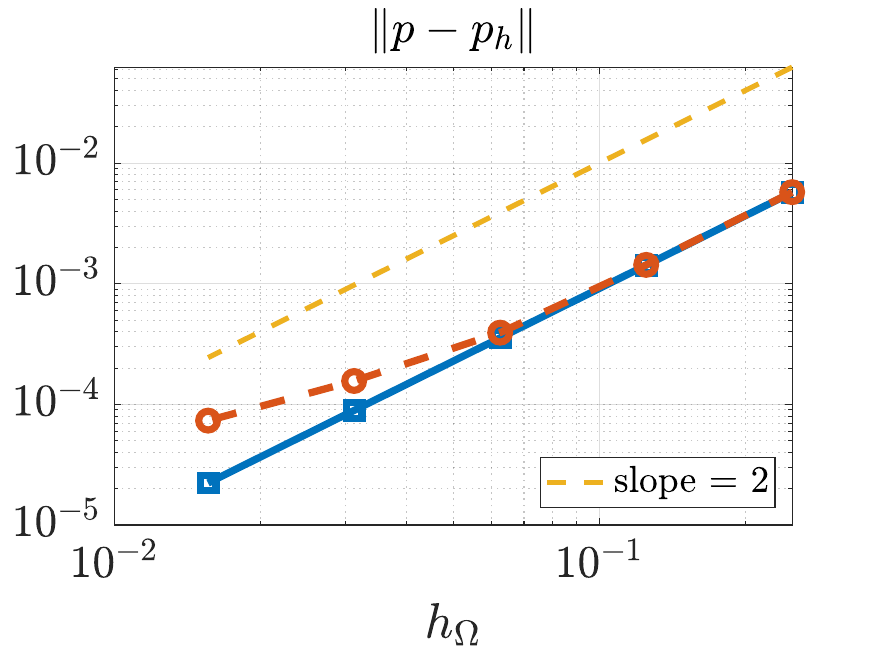}\\\medskip
		\includegraphics[width=0.4\linewidth]{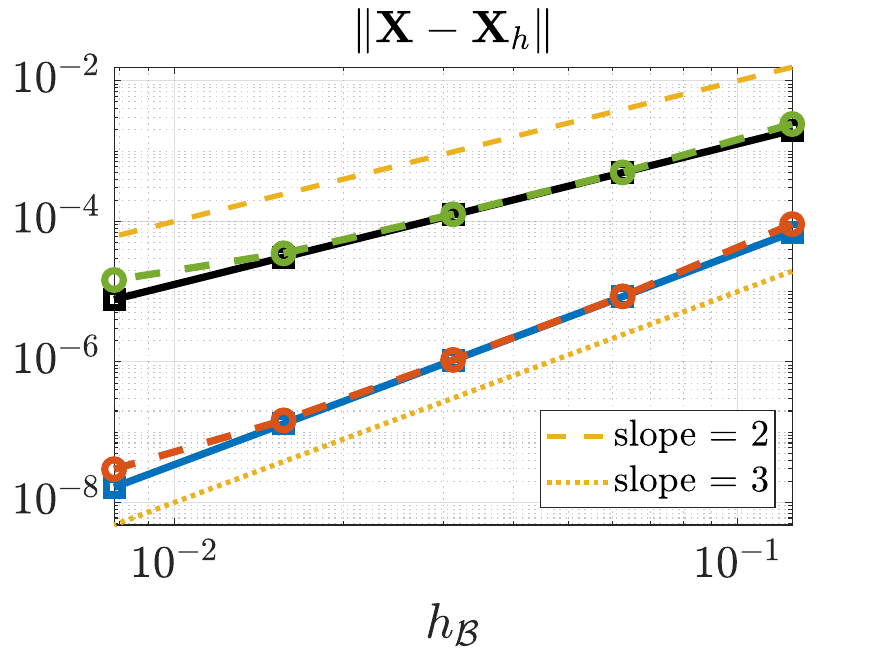}\qquad
		\includegraphics[width=0.4\linewidth]{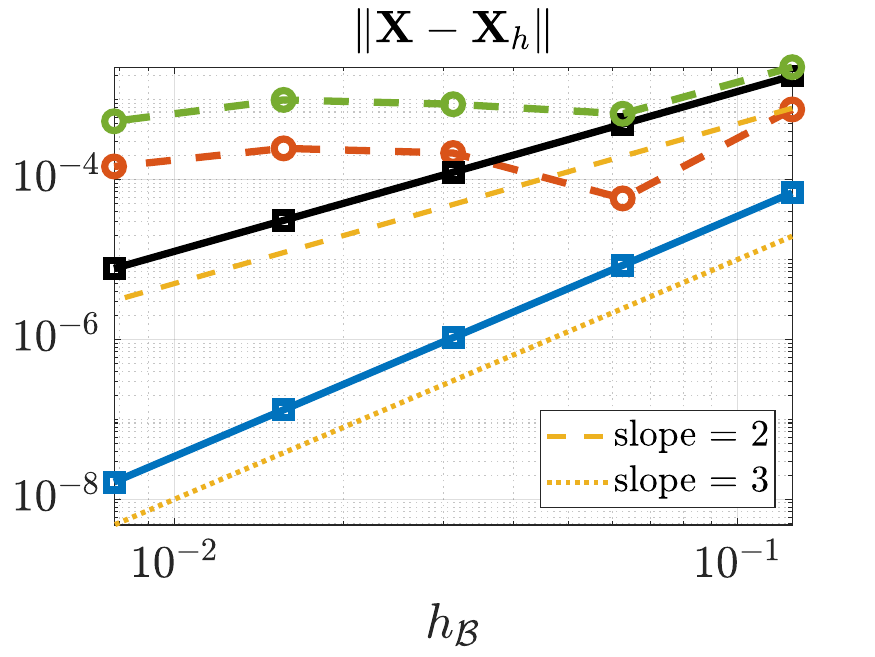}\\\medskip
		\includegraphics[width=0.4\linewidth]{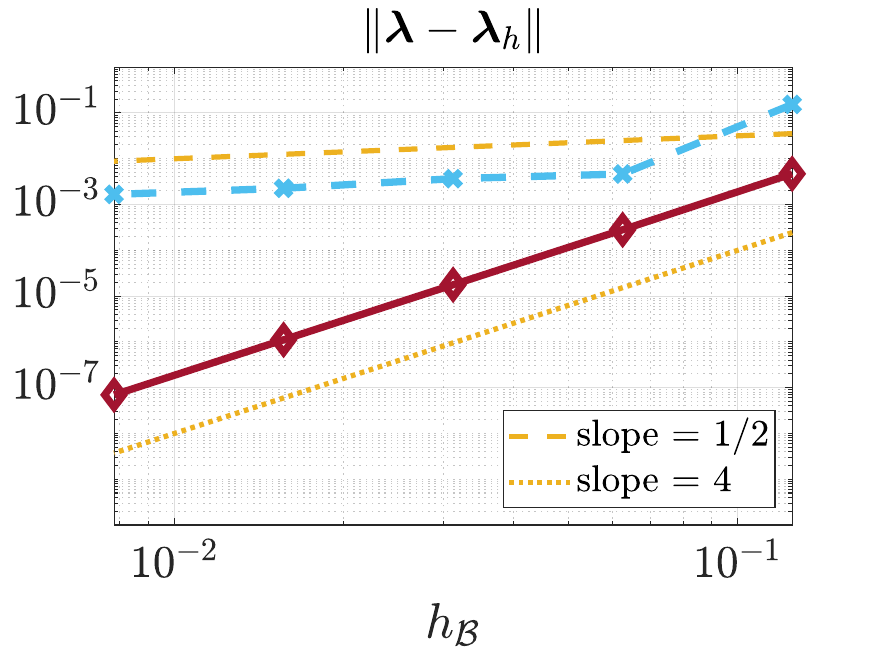}\qquad
		\includegraphics[width=0.4\linewidth]{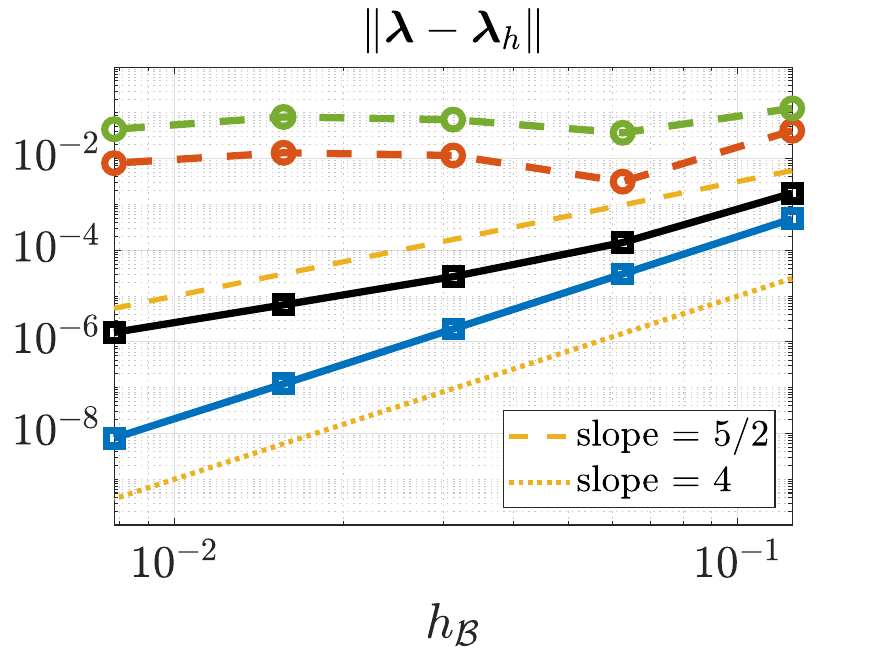}\\\medskip
		\includegraphics[width=1\linewidth]{figures/legend}\\\medskip
		\caption{\DB Convergence history of $\u,p,\X,\llambda$ in Test~4: comparison between exact and approximate assembly of the interface matrix. The results obtained with $\c=\c_0$ are collected in the left column, while the right column is related to $\c=\c_1$.\BD}
		\label{fig:test4_results}
	\end{figure}
	
	\begin{rem}
		The numerical investigations performed in our previous work
		\cite{boffi2022interface} were focused on the case of coupling with $\c=\c_1$.
		We remark that the results we obtained at that time are consistent with the
		quadrature error estimate proved in Proposition~\ref{prop:h1_final}. Indeed,
		all the tests where performed considering ${h_\B}/{h_\Omega}$ constant and
		showed a lack of optimality for the method constructed with the approximate
		coupling term. Proposition~\ref{prop:h1_final} requires that both ${h_\B}$ and
		${h_\B}/{h_\Omega}$ decrease to zero in order to obtain an optimal method without mesh intersection.
	\end{rem}
	
	\begin{rem}
		In \cite{boffi2022parallel}, we studied a first example of parallel solver for the fictitious domain formulation under consideration. 
		The coupling bilinear form was set to be $\c_0$. 
		The performance of the proposed solver has been assessed also by looking at the percentage of volume loss of the immersed solid between the last and the first time instant of simulation. We observed that the results were admissible also when the approximate coupling is considered; more precisely, the percentage of volume loss was smaller in the case without mesh intersection. The good performance is now justified by the quadrature error estimate obtained in Proposition~\ref{prop:l2_final}.
	\end{rem}
	
	\section{Conclusions}
	
	We discussed how to deal with the coupling term in the finite element discretization of fluid-structure interaction problems modeled by the formulation with distributed Lagrange multiplier introduced in~\cite{2015}: this is used to enforce the kinematic condition in a fictitious domain framework.
	
	The construction of the coupling term consists in integrating on the solid domain both fluid and solid variables, which are defined on two independent non-matching grids. Two approaches can be considered: the \textit{exact approach} requires the computation of the intersection between the two meshes, so that a composite quadrature rule is implemented, whereas the \textit{approximate approach} consists in inexact integration over the solid domain. We focused on the quadrature error due to the effect of inexact integration.
	
	We showed that the discrete problem is stable, i.e. it satisfies the
	inf-sup conditions, also when the coupling term is computed inexactly.
	Moreover, we proved quadrature error estimates: the coupling form $\c_0$
	behaves well provided that the solid mesh size $h_\B\rightarrow0$, whereas for $\c_1$ also the condition $h_\B/h_\Omega\rightarrow0$ is required. Our theoretical results are validated by two numerical tests and are also consistent with the experimental results we obtained in our previous studies \cite{boffi2022interface,boffi2022parallel}.
	
	\section*{Acknowledgments}
	The authors are members of INdAM Research group GNCS. The research of L. Gastaldi is partially supported by PRIN/MUR (grant No.20227K44ME). D. Boffi and L. Gastaldi are partially supported by IMATI/CNR.

	\bibliographystyle{abbrv}
	\bibliography{biblio}
\end{document}